\DeclareMathAlphabet{\scr}{U}{rsfs}{m}{n}
\def\draftdate{October 10, 2013}
\newcommand{\noloc}{\;{:}}
\newcommand{\EnRing}[1][{n}]{\mathop{\aE_{#1}\aR\mskip-1.75mu\it ing}\nolimits}
\newcommand{\HoRing}{\mathop{\it Ho\aR\mskip-1.75mu\it ing}\nolimits}
\newcommand{\Top}{\mathop{\aT\mskip-5.5mu\it op}\nolimits}
\newcommand{\EnSpace}[1][{n}]{\mathop{\aE_{#1}\aT\mskip-5.5mu\it op}\nolimits}
\newcommand{\HSpace}{\mathop{\aH\aT\mskip-5.5mu\it op}\nolimits}
\newcommand{\Mod}{\mathop{\aM\mskip-4.0mu\it od}\nolimits}
\newcommand{\Modho}[1][{R}]{\Mod^{E_{n}}_{\Ho R}}
\newcommand{\Modps}[1][{R}]{\Mod^{E_{n}}_{R}}
\newcommand{\Spectra}{\aM}
\newcommand{\Stable}{\aS}
\newcommand{\LMS}{\aS\mskip-7.5mu_{\text{\tiny  LMS}}}
\newcommand{\EKMM}{\aM_{S}}
\newcommand{\Ass}{\mathop{\oA\it ss}\nolimits}
\newcommand{\rO}{\mathrm{O}}
\newcommand{\rCn}{\mathrm{C}_{n}}
\newcommand{\rd}{\mathbf{R}}
\newcommand{\six}[1][{6}]{\langle #1\rangle}
\newcommand{\tofrom}{\xymatrix@C-.5pc{\ar@<.5ex>[r]&\ar@<.5ex>[l]}}
\newcommand{\bC}{{\mathbb{C}}}
\newcommand{\bL}{{\mathbb{L}}}
\newcommand{\bN}{{\mathbb{N}}}
\newcommand{\bP}{{\mathbb{P}}}
\newcommand{\bU}{{\mathbb{U}}}
\newcommand{\bZ}{{\mathbb{Z}}}
\DeclareMathAlphabet{\mathscr}{U}{rsfs}{m}{n}
\let\catsymbfont\mathscr
\newcommand{\aE}{{\catsymbfont{E}}}
\newcommand{\aF}{{\catsymbfont{F}}}
\newcommand{\aI}{{\catsymbfont{I}}}
\newcommand{\aH}{{\catsymbfont{H}}}
\newcommand{\aM}{{\catsymbfont{M}}}
\newcommand{\aR}{{\catsymbfont{R}}}
\newcommand{\aS}{{\catsymbfont{S}}}
\newcommand{\aT}{{\catsymbfont{T}}}
\let\opsymbfont\mathcal
\newcommand{\oA}{{\opsymbfont{A}}}
\newcommand{\oC}{{\opsymbfont{C}}}
\newcommand{\oL}{{\opsymbfont{L}}}
\newcommand{\oO}{{\opsymbfont{O}}}
\newcommand{\oR}{{\opsymbfont{R}}}
\newcommand{\iso}{\cong}     
\newcommand{\sma}{\wedge}    
\renewcommand{\to}{\mathchoice{\longrightarrow}{\rightarrow}{\rightarrow}{\rightarrow}}
\newcommand{\from}{\mathchoice{\longleftarrow}{\leftarrow}{\leftarrow}{\leftarrow}}
\newcommand{\overto}[1]{\xrightarrow{\,#1\,}}
\def\quickop#1{\expandafter\DeclareMathOperator\csname #1\endcsname{#1}}
\newtheorem{thm}[equation]{Theorem}
\newtheorem{cor}[equation]{Corollary}
\newtheorem{lem}[equation]{Lemma}
\newtheorem{prop}[equation]{Proposition}
\theoremstyle{definition}
\newtheorem{defn}[equation]{Definition}
\newtheorem{cons}[equation]{Construction}
\theoremstyle{remark}
\newtheorem{rem}[equation]{Remark}
\numberwithin{equation}{section}
\newcommand{\term}[1]{\emph{#1}}
\long\def\ignore#1\endignore{\relax}
\begin{document}

\title{$E_{n}$ Genera}
\author{Steven Greg Chadwick}
\address{Department of Mathematics, University of California,
Riverside, CA}
\email{steven.chadwick@ucr.edu}
\author{Michael A. Mandell}
\address{Department of Mathematics, Indiana University, Bloomington, IN}
\email{mmandell@indiana.edu}
\thanks{The second author was supported in part by NSF grant DMS-1105255}

\date{\draftdate}

\subjclass{Primary 55N22; Secondary 55P43}

\begin{abstract}
Let $R$ be an $E_{2}$ ring spectrum with zero odd dimensional homotopy
groups.  Every map of ring spectra $MU\to R$ is represented by a map
of $E_{2}$ ring spectra.  If $2$ is invertible in $\pi_{0}R$, then
every map of ring spectra $MSO\to R$ is represented by a map of
$E_{2}$ ring spectra.
\end{abstract}

\maketitle

\section{Introduction}

Genera (in the sense we use the word here) are multiplicative
cobordism invariants of manifolds with extra structure.  In the past
60 years, the study of various genera has led to stunning advances
throughout mathematics, from algebraic geometry with the
Hirzebruch-Riemann-Roch theorem
\cite{Hirzebruch-RiemannRochOrig,Hirzebruch-RiemannRoch}, to
differential equations with the Atiyah-Singer index theorem
\cite{Palais-AtiyahSinger}, to mathematical physics with the Witten
genus \cite{Witten-Wittengenus}, in addition to innumerable advances
inside topology.
Because our perspective comes from stable homotopy theory, we will
restrict attention to genera that extend to singular manifolds on
pairs.  With only minor additional hypotheses, such genera are
precisely natural transformations of cohomology theories, or better,
maps of ring spectra out of a cobordism spectrum or a related
spectrum.  These genera lie at the heart of modern stable homotopy
theory, in particular, its organization in terms of chromatic
phenomena, which derives from Quillen's identification of genera of
stably almost complex manifolds (i.e., ring spectrum maps out of $MU$)
in terms of formal coordinates for formal group laws.

The three most basic cobordism spectra $MO$ (unoriented cobordism),
$MSO$ (oriented cobordism), and $MU$ (complex cobordism) are all
examples of $E_{\infty}$ 
ring spectra (now usually called commutative $S$-algebras): These are
ring spectra where the multiplication is not just associative,
commutative, and unital in the stable category, but actually in a
point-set symmetric monoidal category of spectra.  The $E_{\infty}$
structures on these cobordism spectra derive from products and powers
of manifolds, and work of Ando, Hopkins, Rezk, and Strickland (and
their collaborators, among others) shows that refining maps out of
cobordism spectra and related spectra to $E_{\infty}$ (or $H_{\infty}$)
ring maps has implications in geometry as well as topology and stable
homotopy theory (see, for example, \cite{AHR, AndoThesis,AHSSigma}).
An $E_{\infty}$ ring structure brings with it many extra tools and
much of the work of stable homotopy in the past two decades has
involved producing $E_{\infty}$ ring structures and $E_{\infty}$ ring
maps.

Recent work of Johnson and Noel \cite{JohnsonNoel}, however, shows that
maps out of $MU$ that come from $p$-typical orientations usually do
not commute with power operations.  As a consequence, many of the maps
of ring spectra out of $MU$ that are fundamental in the chromatic
picture of stable homotopy theory cannot be represented by
$E_{\infty}$ ring maps.  This mandates consideration of less rigid
structures than $E_{\infty}$ ring structures, and an obvious place to
start is the Boardman-Vogt hierarchy of $E_{n}$ structures, of which
$E_{\infty}$ is the apex.  An $E_{1}$ ring structure is also called an
$A_{\infty}$ ring structure (or associative $S$-algebra structure) and
retains all of the homotopy coherent associativity without the
commutativity.  An $E_{2}$ ring spectrum is homotopy commutative and
as $n$ gets higher, $E_{n}$ ring spectra become more coherently
homotopy commutative and have more of the power operations in an
$E_{\infty}$ ring spectrum.

The purpose of this paper is to study which genera of oriented
manifolds and stably almost complex manifolds are represented by maps
of $E_{n}$ ring spectra.  For reasons explained below, the easiest
case is when $n=2$, where we have the following results.

\begin{thm}\label{thmMSO}
Let $R$ be an even $E_{2}$ ring spectrum with
$1/2\in \pi_{0}R$.  Then every map of ring spectra $MSO\to R$ lifts to
a map of $E_{2}$ ring spectra $MSO\to R$.
\end{thm}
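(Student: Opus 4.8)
The plan is to reduce the statement to the companion $MU$ case --- every map of ring spectra $MU\to R$ into an even $E_2$ ring spectrum lifts to a map of $E_2$ ring spectra --- by exhibiting $MSO[1/2]$ as a retract of $MU[1/2]$ in $E_2$ ring spectra. First, since $1/2\in\pi_0R$ we have $R\simeq R[1/2]$, so every map of ring spectra $MSO\to R$ factors uniquely through the smashing localization $MSO\to MSO[1/2]$, and likewise for maps of $E_2$ ring spectra; hence it is enough to lift maps of ring spectra $MSO[1/2]\to R$. For the same reason the $MU$ case may be applied with $MU$ replaced by $MU[1/2]$.

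To produce the retract, recall that $MSO$ and $MU$ are the Thom spectra of the infinite loop maps $j_{SO}\colon BSO\to BGL_1S$ and $j_U\colon BU\to BGL_1S$, and that complexification $c\colon BSO\to BU$ and realification $r\colon BU\to BSO$ are infinite loop maps with $j_{SO}\circ r\simeq j_U$ (the sphere bundle of a complex bundle is the sphere bundle of its underlying real bundle) and $r\circ c\simeq 2\cdot\mathrm{id}_{BSO}$ (complexifying and then realifying a real bundle $V$ gives $V\oplus V$). Working from now on over $S[1/2]$, multiplication by $2$ is an equivalence on $BU[1/2]$ and on $BSO[1/2]$, so we may form one-half of the realification, $\bar r\colon BU[1/2]\to BSO[1/2]$, and set $\tilde\jmath:=j_{SO}\circ\bar r$. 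Since the $J$-homomorphisms are additive, $\tilde\jmath$ is one-half of $j_U$, that is, $\tilde\jmath\simeq j_U\circ\beta$ for the self-equivalence $\beta$ of $BU[1/2]$ inverse to $\cdot 2$; hence functoriality of the Thom spectrum construction over $BGL_1S$ gives an equivalence of $E_\infty$ ring spectra $M\tilde\jmath\simeq MU[1/2]$. On the other hand $\bar r\circ c\simeq\mathrm{id}_{BSO[1/2]}$, so $\tilde\jmath\circ c\simeq j_{SO}$, while also $\tilde\jmath=j_{SO}\circ\bar r$; applying functoriality to the two factorizations $BSO[1/2]\xrightarrow{c}BU[1/2]\xrightarrow{\tilde\jmath}BGL_1S$ and $BU[1/2]\xrightarrow{\bar r}BSO[1/2]\xrightarrow{j_{SO}}BGL_1S$ yields maps of $E_\infty$ ring spectra
\[
MSO[1/2]\ \longrightarrow\ M\tilde\jmath\ \longrightarrow\ MSO[1/2]
\]
whose composite, being induced by $\bar r\circ c\simeq\mathrm{id}$, is homotopic to the identity. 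Composing with $M\tilde\jmath\simeq MU[1/2]$ we obtain maps of $E_2$ ring spectra $s\colon MSO[1/2]\to MU[1/2]$ and $\rho\colon MU[1/2]\to MSO[1/2]$ with $\rho\circ s\simeq\mathrm{id}$.

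Given these, the theorem follows: for a map of ring spectra $f\colon MSO[1/2]\to R$, apply the $MU$ case to $f\circ\rho\colon MU[1/2]\to R$ to get a map of $E_2$ ring spectra $\tilde g\colon MU[1/2]\to R$ lifting $f\circ\rho$; then $\tilde g\circ s$ is a map of $E_2$ ring spectra whose underlying map of ring spectra is $\tilde g\circ s\simeq(f\circ\rho)\circ s=f\circ(\rho\circ s)\simeq f$. I expect the real difficulty to lie outside this reduction. The main obstacle is the $MU$ case itself, which carries the essential content: there one uses that $R$ is even to kill the obstructions to promoting a ring map $MU\to R$ to an $E_2$ ring map, the point being that these obstructions live in odd-degree homotopy of $R$ (the vanishing of the degree-one Browder bracket of an even $E_2$ ring spectrum being the first manifestation of this, and the reason the argument works for $n=2$ rather than larger $n$). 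A secondary technical point is arranging the multiplicative functoriality of the Thom spectrum construction carefully enough that the retract square above is a genuine diagram of $E_\infty$ (hence $E_2$) ring spectra; the one delicate ingredient there is the coherence of the homotopies $j_{SO}\circ r\simeq j_U$ and $r\circ c\simeq 2\cdot\mathrm{id}$ with the infinite loop structures.
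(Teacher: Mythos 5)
Your strategy---reducing to Theorem~\ref{thmMU} by exhibiting $MSO[1/2]$ as a retract of $MU[1/2]$ in $E_{2}$ ring spectra---is genuinely different from the paper's. The paper (Remark~\ref{remMSO}) never relates $MSO$ to $MU$: it reruns the Postnikov-tower induction of Section~\ref{secMU} in parallel for $MSO$, using the equivalence $MSO[1/2]\simeq MSp[1/2]$ to see that ring maps out of $MSO$ are detected on $MSp(1)$, and the fact that $H_{*}(B^{2}BSO)$ with $2$ inverted is torsion free and even to kill the obstructions. Your final deduction from the retraction is sound (you only need $s$ to be an $E_{2}$ map, $\rho$ a homotopy ring map, and $\rho\circ s\simeq\mathrm{id}$ on underlying spectra), so if the retraction existed this would be an attractive shortcut.

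The gap is in the construction of the retraction, and it is more serious than the coherence issue you flag. The map $\tilde\jmath=j_{SO}\circ\bar r$ does not exist as a map to $BGL_{1}S$ (equivalently $BF$): $\bar r$ takes values in $BSO[1/2]$, and $j_{SO}$ does not extend over the localization $BSO\to BSO[1/2]$ because $BF$ is far from $\bZ[1/2]$-local. Concretely, $j_{SO}$ induces an isomorphism from $\pi_{2}BSO=\bZ/2$ onto $\pi_{2}BSF$, the first stable stem (it detects $\eta$), while $\pi_{2}(BSO[1/2])=0$; hence no map out of $BSO[1/2]$ restricts to $j_{SO}$, and the same applies to $j_{U}\circ\beta$. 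To repair this you must localize the target as well, working with maps to $BGL_{1}(S[1/2])$ and Thom spectra that are $S[1/2]$-algebras; you then owe (a) a multiplicative Thom spectrum functor over $S[1/2]$---the paper's framework (Definition~\ref{defEnThom}, Lewis's $\oC_{n}\times\oL$-maps to $BF$) only covers the base $S$---and (b) an identification of the Thom spectrum of $BSO\to BSO[1/2]\to BGL_{1}(S[1/2])$ with the smashing localization $MSO\wedge S[1/2]$, and likewise for $MU$. All of this is plausible with a parametrized (ABGHR-style) theory of Thom spectra, but none of it follows from what the paper provides, and on top of it sits the coherence you acknowledge: the homotopies $j_{SO}\circ r\simeq j_{U}$ and $r\circ c\simeq 2$ must be promoted to homotopies of $E_{2}$ maps over the localized base before $s$ is an $E_{2}$ ring map. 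The paper's route avoids all of this at the cost of redoing the obstruction calculation with $MSp(1)$ and $B^{2}BSO$ in place of $MU(1)$ and $B^{2}BU$.
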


\begin{thm}\label{thmMU}
Let $R$ be an even $E_{2}$ ring spectrum.
Then every map of ring spectra $MU\to R$ lifts to a map of $E_{2}$
ring spectra $MU\to R$.
\end{thm}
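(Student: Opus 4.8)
The plan is to exploit the well-known description of $MU$ as a Thom spectrum together with the Quillen idempotent / formal group law philosophy, but recast entirely in the $E_{2}$ world. Recall that $MU$ carries an $E_{\infty}$ (hence $E_{2}$) ring structure coming from the Thom spectrum construction on the $E_{\infty}$ map $BU\to BGL_{1}(S)$; in particular $MU$ is the free $E_{2}$ ring spectrum on nothing especially convenient, so instead I would use a cell-by-cell obstruction argument in the category $\EnRing[2]$ of $E_{2}$ ring spectra over $R$. Concretely, fix a ring spectrum map $f\colon MU\to R$. Because $R$ is an even $E_{2}$ ring spectrum, $\pi_{*}R$ is concentrated in even degrees, and $\pi_{*}MU=\bZ[x_{1},x_{2},\dots]$ with $|x_{i}|=2i$ is also even and is a \emph{polynomial} ring; the map $f$ is determined on homotopy by the images $f(x_{i})\in\pi_{2i}R$. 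The strategy is to build an $E_{2}$ ring map realizing exactly this data by constructing $MU$ as a filtered colimit of $E_{2}$ ring spectra $MU(k)$ built from $MU(k-1)$ by attaching an $E_{2}$ ``polynomial'' cell in degree $2k$, and lifting $f$ one stage at a time.

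The key steps, in order, are as follows. First, I would establish that $MU$ is equivalent, \emph{as an $E_{2}$ ring spectrum}, to a colimit $\colim_{k} MU(k)$ where $MU(0)=S$ (the sphere) and $MU(k)$ is obtained from $MU(k-1)$ by a pushout in $\EnRing[2]$ of the form $E_{2}(S^{2k-1})\to E_{2}(*)$ along a map $E_{2}(S^{2k-1})\to MU(k-1)$ killing the appropriate homotopy class — here $E_{2}(X)$ denotes the free $E_{2}$ ring spectrum on a spectrum $X$. This is the point-set/$E_{2}$ upgrade of the classical cell structure of $MU$ coming from the splitting of $BU$; it follows from the fact that $H_{*}MU$ is polynomial and a standard minimal-cell argument, but it must be done multiplicatively. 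Second, inductively suppose $f$ has been lifted to an $E_{2}$ ring map $\tilde f_{k-1}\colon MU(k-1)\to R$. To extend over the pushout defining $MU(k)$, by the universal property I must produce a nullhomotopy of the composite $S^{2k-1}\to MU(k-1)\overto{\tilde f_{k-1}} R$ compatible with the chosen nullhomotopy over $MU(k)$, i.e. I must choose the value $f(x_{k})\in\pi_{2k}R$ so that the attaching obstruction in $\pi_{2k-1}R$ vanishes; but $R$ is even so $\pi_{2k-1}R=0$ and the obstruction is automatically zero — the extension exists, and the space of extensions is a torsor over $\pi_{2k}R$, which is exactly the freedom needed to match $f(x_{k})$. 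Third, after assembling $\tilde f=\colim\tilde f_{k}\colon MU\to R$ in $\EnRing[2]$, I would check that on underlying ring spectra $\tilde f$ agrees with $f$: both are determined by their effect on $\pi_{*}MU$ since $\pi_{*}MU$ is polynomial and $\pi_{*}R$ is even, and the inductive choices arrange agreement on each generator $x_{k}$; a short argument with the colimit and the Hurewicz/edge map identifies the underlying map of $\tilde f$ with $f$ up to homotopy of ring spectrum maps.

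The main obstacle I anticipate is Step one: producing the cell structure of $MU$ \emph{inside} the category of $E_{2}$ ring spectra, with the cells being free $E_{2}$ algebras on spheres attached along maps that are simultaneously controllable on homotopy and on the $E_{2}$ structure. The subtlety is that $E_{2}(S^{2k-1})$ has complicated homotopy (it involves the $E_{2}$ Dyer–Lashof operations and the $E_{2}$-operad homology), so one must verify that the relevant attaching maps land in the ``polynomial generator'' part and that the resulting colimit has the homotopy type of $MU$ and not some larger $E_{2}$ ring spectrum; this likely requires a careful comparison of $H_{*}(-;\bZ)$ (or mod $p$ homology with Dyer–Lashof operations) of the colimit against $H_{*}MU$, using that the free $E_{2}$ functor on homology is understood and that $H_{*}MU$ is a free commutative algebra concentrated in even degrees with trivial higher operations in the relevant range. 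Once that structural input is in hand, the obstruction theory in Steps two and three is essentially forced by the evenness hypothesis on $R$, which makes every odd-degree obstruction group vanish; the role of the hypothesis ``$1/2\in\pi_{0}R$'' being \emph{absent} here (in contrast to Theorem \ref{thmMSO}) reflects that $H_{*}MU$ is already polynomial over $\bZ$ with no $2$-torsion phenomena to invert, whereas $MSO$ requires inverting $2$ to make its homology polynomial.
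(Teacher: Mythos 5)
Your proposal takes a genuinely different route from the paper (cellular induction on the source $MU$ inside $E_{2}$ ring spectra, rather than Postnikov induction on the target $R$), but it has a gap at exactly the point you flag as ``the main obstacle,'' and that obstacle is not a technicality --- it is the entire content of the argument. The claimed presentation of $MU$ as $\colim MU(k)$ with one free $E_{2}$-cell $E_{2}(S^{2k-1})\to E_{2}(D^{2k})$ attached per even degree is not established and is almost certainly false as stated: the free $E_{2}$-algebra on an even sphere has homology a free algebra over the $E_{2}$ Dyer--Lashof operations (including odd-degree Browder brackets and $Q_{1}$-classes), so each cell attachment introduces far more than one polynomial generator, and a minimal $E_{2}$-cell structure on $MU$ would require infinitely many cells in each degree with attaching maps that are not controlled by the evenness of $\pi_{*}MU$ alone. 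Identifying such a colimit with $MU$ is a hard theorem in its own right (of Hopkins--Mahowald type), not a ``standard minimal-cell argument,'' and without it Steps two and three have nothing to induct over.

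There is also an error in Step three: a map of ring spectra $MU\to R$ is \emph{not} determined by its effect on $\pi_{*}MU$; it is determined by its restriction to $MU(1)\simeq\Sigma^{-2}\bC P^{\infty}$, i.e., by the coordinate $x+a_{1}x^{2}+\dotsb$, and distinct coordinates can induce the same map $\pi_{*}MU\to\pi_{*}R$ (whenever the resulting formal group law admits nontrivial strict automorphisms). So matching $f(x_{k})\in\pi_{2k}R$ at each stage does not identify $\tilde f$ with $f$. The paper avoids both problems by never filtering $MU$: it filters $R$ by its Postnikov tower in $E_{2}$ ring spectra (Theorem~\ref{thmgenpt}), uses the $E_{n}$ ring Thom isomorphism (Theorem~\ref{thmgenthom}) to identify the layers $\EnRing[2](MU,H\vee\Sigma^{q+1}H\pi_{q}R)$ with $\Top_{*}(BSU,K(\pi_{q}R,q+3))$, whose homotopy $\tilde H^{q+3-m}(BSU;\pi_{q}R)$ is even-concentrated, and then compares with $\HoRing(MU,R_{q})\iso\pi_{0}\Stable(MU(1),R_{q})_{u}$ via the Bott map computation $c_{m+1}\mapsto(-1)^{m}x^{m}$ (Proposition~\ref{propbott2}), which gives the surjectivity needed to lift a prescribed ring map rather than merely to produce some $E_{2}$ map. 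If you want to salvage your outline, you would need to replace your Step one with a genuine multiplicative cell structure theorem for $MU$ and replace ``determined on homotopy'' with ``determined on $MU(1)$'' throughout.
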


Here ``even'' means that the homotopy groups are concentrated in even degrees, 
i.e., $\pi_{q}R=0$ for $q$ odd.  Examples of $E_{2}$ (or better) even
ring spectra include the Brown-Peterson spectrum $BP$, the Lubin-Tate
spectra $E_{n}$, and conjecturally, the truncated Brown-Peterson
spectra $BP\langle n\rangle$ and Johnson-Wilson spectra $E(n)$.  Each
of these spectra comes with a canonical map of ring spectra out of
$MU$ that is a $p$-typical orientation and that by the Johnson-Noel
result \cite[1.3,1.4]{JohnsonNoel} does not come from a map of
$E_{\infty}$ ring spectra (at small primes $p$, and conjecturally at
all primes).  Theorem~\ref{thmMU} shows that these maps do come from
maps of $E_{2}$ ring spectra.  The case of $BP$ seems particularly
worth highlighting as the coherence of the Quillen map $MU\to BP$ has
been an open question since the 1970's.

\begin{cor}\label{corQI}
The Quillen idempotent $MU_{(p)}\to MU_{(p)}$ and the Quillen map
$MU\to BP$ are represented by maps of $E_{2}$ ring spectra.
\end{cor}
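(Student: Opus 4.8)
The plan is to deduce both assertions directly from Theorem~\ref{thmMU}; the only work is to check that the target spectra are even $E_{2}$ ring spectra and, for the Quillen idempotent, to perform a short descent along the $p$-localization map. First I would record that $MU_{(p)}$ is an even $E_{2}$ ring spectrum: $MU$ is $E_{\infty}$, hence $E_{2}$; $p$-localization is smashing, so $MU_{(p)}\simeq MU\sma S_{(p)}$ is the base change of the $E_{2}$ ring $MU$ along the $E_{\infty}$ ring map $S\to S_{(p)}$ and therefore inherits an $E_{2}$ ring structure for which the localization map $\ell\colon MU\to MU_{(p)}$ is a map of $E_{2}$ ring spectra; and $\pi_{q}MU_{(p)}=(\pi_{q}MU)_{(p)}$ vanishes for $q$ odd since $\pi_{*}MU$ is concentrated in even degrees. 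That $BP$ is an even $E_{2}$ ring spectrum was already noted in the introduction.

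The Quillen map $MU\to BP$ is a map of ring spectra, so applying Theorem~\ref{thmMU} with $R=BP$ produces a map of $E_{2}$ ring spectra $MU\to BP$ lifting it. For the Quillen idempotent $e\colon MU_{(p)}\to MU_{(p)}$ the only wrinkle is that its source is $MU_{(p)}$, not $MU$. I would form the composite ring map $e\circ\ell\colon MU\to MU_{(p)}$ and apply Theorem~\ref{thmMU} with $R=MU_{(p)}$ to obtain a map of $E_{2}$ ring spectra $f\colon MU\to MU_{(p)}$ lifting $e\circ\ell$. Because $\ell$ is a smashing localization and the target $MU_{(p)}$ is $p$-local, $\ell$ is initial among $E_{2}$ ring maps out of $MU$ with $p$-local target, so $f$ factors up to homotopy as $\bar{f}\circ\ell$ for a map of $E_{2}$ ring spectra $\bar{f}\colon MU_{(p)}\to MU_{(p)}$. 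On underlying spectra $\bar{f}\circ\ell\simeq e\circ\ell$, and $\ell^{*}$ is a bijection from homotopy classes of maps $MU_{(p)}\to MU_{(p)}$ to homotopy classes of maps $MU\to MU_{(p)}$ (again since the target is $p$-local), so $\bar{f}$ represents $e$. Hence the Quillen idempotent is represented by the map of $E_{2}$ ring spectra $\bar{f}$.

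I do not expect any genuine obstacle: the corollary is a formal consequence of Theorem~\ref{thmMU}, with the two points needing a line of care being the inheritance of an even $E_{2}$ structure by $MU_{(p)}$ and the descent of an $E_{2}$ ring map out of $MU$ with $p$-local target to one out of $MU_{(p)}$. One may add that $\bar{f}$ can be taken to satisfy $\bar{f}\circ\bar{f}\simeq\bar{f}$ as maps of homotopy ring spectra, since $\bar{f}\circ\bar{f}$ again represents $e\circ e=e$, although no such idempotence on the $E_{2}$ level is claimed or required.
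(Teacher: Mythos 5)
Your proposal is correct and is exactly the intended argument: the paper states this corollary as an immediate consequence of Theorem~\ref{thmMU}, applied to the even $E_{2}$ ring targets $BP$ and $MU_{(p)}$. The one step you add explicitly --- descending an $E_{2}$ ring map $MU\to MU_{(p)}$ along the smashing localization $\ell$ to an $E_{2}$ ring map out of $MU_{(p)}$ --- is precisely the point the paper leaves implicit, and your handling of it (universal property of the $p$-local $E_{2}$ localization plus injectivity of $\ell^{*}$ on homotopy classes into $p$-local targets) is the standard and correct way to fill it.
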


In fact, since $BP$ may be constructed as the telescope over the
Quillen idempotent, this gives a new proof that $BP$ is an $E_{2}$
ring spectrum, independent of \cite{bmbp}; this argument appears in
detail in the first author's 2012 PhD thesis~\cite{thesis}.

Our techniques extend to give information for $E_{n}$ ring maps for
$n>2$ as well, especially the case $n=4$, but the picture is more
complicated.  For example, we prove the following result in
Section~\ref{secE4}.

\begin{thm}\label{thmE4}
There exists a map of ring spectra $MU\to MU$ that does not lift to
a map of $E_{4}$ ring spectra.
\end{thm}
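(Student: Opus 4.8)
The plan is to push the obstruction theory for $E_{n}$-ring maps out of $MU$ — the same machinery that yields Theorems~\ref{thmMSO} and~\ref{thmMU} — past the $E_{2}$-stage, to identify the first obstruction group that the ``even'' hypothesis does \emph{not} automatically annihilate, to show that this group is nonzero when $R=MU$, and to name a homotopy ring map $g\colon MU\to MU$ whose associated obstruction there is nonzero. Since an $E_{4}$-ring refinement of $g$ would in particular kill that obstruction, no such $g$ can be made $E_{4}$, which is Theorem~\ref{thmE4}.

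In more detail, recall that a homotopy ring map $MU\to R$ is the datum of an (unstructured) nullhomotopy of the Thomification map $BU\to BGL_{1}R$, and that an $E_{k}$-ring refinement is the same as a $k$-fold-loop-map nullhomotopy; passing from $E_{k}$ to $E_{k+1}$ is controlled by the topological Andr\'e--Quillen cohomology of $MU$ for the layer $E_{k+1}/E_{k}$ of the operadic tower, that is, by a complex whose input is the reduced homology of the corresponding configuration-space quotient, smashed against $MU$ and the $E_{*}$-cotangent complex of $MU$ (which, as for any Thom spectrum, is the free $MU$-module on the reduced suspension spectrum $\Sigma^{\infty}BU$). When $\pi_{*}R$ is even these groups vanish through the $E_{2}$-stage — this is exactly Theorems~\ref{thmMSO} and~\ref{thmMU} — and a first task is to check how far up the tower the evenness keeps forcing vanishing; I expect it to fail no later than the passage to $E_{4}$, where the degree-$3$ Browder operation of an $E_{4}$-structure and the first Dyer--Lashof-type operation not visible at level $3$ enter the layer.

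The heart of the argument is then the computation, for $R=MU$, that this obstruction group is nonzero and that a specific map lands in it nontrivially. Here one pairs the operation-carrying part of the reduced homology of the configuration-space quotient against $\pi_{*}MU=\bZ[x_{1},x_{2},\dots]$ and $H_{*}(BU)$, and uses that evenness of $MU$ leaves no room for a differential in the obstruction spectral sequence to destroy the surviving summand. I expect this computation to be the main obstacle — in particular the parity bookkeeping, since the degree-$3$ bracket by itself is odd and hence invisible on the even classes, so the nonzero contribution must come from the operation-twisted part of the layer — together with the control of indeterminacy, that is, showing the obstruction is a well-defined nonzero element of a genuine quotient group rather than merely a coset that happens to miss $0$ for one particular lower refinement.

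Finally I would exhibit the witness: take $g\colon MU\to MU$ built from a suitably chosen nontrivial change of complex orientation $x\mapsto x+\sum_{i\ge 1}a_{i}x^{i+1}$, with the $a_{i}\in\pi_{*}MU$ arranged so that the resulting bilinear/quadratic obstruction expression is nonzero, and compute its image under the obstruction map to be the nonzero class above; if the naive choice gives zero one enlarges or shifts the coordinate change. Then $g$ admits no $E_{4}$-ring refinement, proving Theorem~\ref{thmE4}. As a sanity check, this recovers — now at the far weaker level $E_{4}$ rather than $E_{\infty}$ — the Johnson--Noel obstruction \cite{JohnsonNoel} to rigidifying such self-maps of $MU$.
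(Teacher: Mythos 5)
Your proposal is a research plan rather than a proof, and it heads down a different and much harder road than the one the paper takes, without supplying the ingredients that road would require. The paper does not climb an operadic tower $E_{2}\to E_{3}\to E_{4}$ and never touches TAQ layers, Browder operations, or Dyer--Lashof operations: because the target $MU$ is $E_{\infty}$ (hence $E_{5}$) and the identity is an $E_{\infty}$ ring map, Theorem~\ref{thmThomIso} identifies $\EnRing[4](MU,MU)$ with $\Top_{*}(B^{4}BU,B^{4}SL_{1}MU)$, so $\pi_{0}\EnRing[4](MU,MU)\iso(\widetilde{sl}_{1}MU)^{4}(B^{4}BU)$, while $\HoRing(MU,MU)\iso(\widetilde{sl}_{1}MU)^{0}(BU(1))$; the forgetful map is induced by $\Sigma^{4}BU(1)\to\Sigma^{4}BU\to B^{4}BU$. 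Everything then reduces, via collapsing Atiyah--Hirzebruch spectral sequences (both sides are even and free), to the map $\tilde H^{*+4}(BU\six)\to\tilde H^{*}(BU(1))$, i.e.\ to the double Bott map together with the Ando--Hopkins--Strickland description of $H^{*}(BU\six)$. The decisive arithmetic fact is that the polynomial generator $y_{5}\in H^{10}(BU\six)$ maps to $\pm 2x^{3}$ because $4=2^{2}$ is a prime power, so the image of $H^{10}(BU\six)\to H^{6}(BU(1))$ is divisible by $2$; hence any coordinate $x+a_{3}x^{4}+\dotsb$ with $a_{3}\in\pi_{6}MU$ not divisible by $2$ gives a ring map with no $E_{4}$ lift. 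Nothing in your proposal locates this divisibility or anything playing its role.

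Concretely, the gaps are: (1) you never set up the operadic obstruction theory you invoke (the paper's tower, Theorem~\ref{thmgenpt}, is a Postnikov tower in the \emph{target}, not a tower of operads, and your proposed $E_{k}\to E_{k+1}$ layer analysis is not established anywhere in the paper nor by you); (2) you never identify the obstruction group, never prove it is nonzero, and never produce a witness --- phrases like ``I expect it to fail no later than the passage to $E_{4}$'' and ``if the naive choice gives zero one enlarges or shifts the coordinate change'' defer exactly the content the theorem requires; and (3) your parity heuristics point at odd-degree bracket operations, whereas the actual obstruction lives in even degrees and is a $2$-divisibility phenomenon in $H^{10}(BU\six)$ coming from the theorem of the cube, which your framework gives no way to see. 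As written, the proposal does not prove the theorem.
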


To explain our approach to studying $E_{n}$ ring maps, it is easier if
we assume that the target ring spectrum $R$ is at least $E_{n+1}$.
(For definiteness we discuss the case of $MU$.) In this case,
we can take advantage of the fact that the Thom diagonal
\[
\tau \colon MU\to MU\sma BU_{+} = MU\sma \Sigma^{\infty}_{+}BU
\]
is an $E_{\infty}$ ring map \cite[p.~447]{lms} and that for any
$E_{n+1}$ ring spectrum $R$, the multiplication
\[
\mu \colon R\sma R\to R
\]
is an $E_{n}$ ring map \cite[1.6]{BFVTHH}.   Then for a
fixed $E_{n}$ ring map $\sigma
\colon MU\to R$ and a variable $E_{n}$ ring map $f\colon
\Sigma^{\infty}_{+}BU\to R$, the composite
\[
MU\overto {\tau} MU \sma \Sigma^{\infty}_{+}BU
\overto{\sigma \sma f}
R\sma R\overto{\mu}R
\]
is an $E_{n}$ ring map.  This induces a map from the space of $E_{n}$
ring maps $\Sigma^{\infty}_{+}BU\to R$ to the space of $E_{n}$ ring
maps $MU\to R$,
\[
\EnRing(\Sigma^{\infty}_{+}BU,R)\to \EnRing(MU,R).
\]
The usual algebraic argument (see Section~\ref{secThomIso}) then shows
that this map is an equivalence.

\begin{thm}\label{thmThomIso}
Let $R$ be an $E_{n+1}$ ring spectrum.  Then the space $\EnRing(MU,R)$
of
$E_{n}$ ring maps from $MU$ to $R$ is either empty or weakly
equivalent to the space $\EnRing(\Sigma^{\infty}_{+}BU,R)$ of $E_{n}$
ring maps from $\Sigma^{\infty}_{+}BU$ to $R$.
\end{thm}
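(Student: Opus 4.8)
The plan is to reduce the statement to the Thom isomorphism. If $\EnRing(MU,R)$ is empty there is nothing to prove, so assume it is nonempty, fix an $E_{n}$ ring map $\sigma\colon MU\to R$, and write $X=\Sigma^{\infty}_{+}BU$; it remains to show that the map $\Phi_{\sigma}\colon\EnRing(X,R)\to\EnRing(MU,R)$, $f\mapsto\mu\circ(\sigma\sma f)\circ\tau$, constructed above is a weak equivalence. The one geometric input will be the Thom isomorphism, packaged as follows. Let $i=\id_{MU}\sma\eta_{X}\colon MU\to MU\sma X$ be induced by the unit $\eta_{X}\colon S\to X$, let $j=\eta_{MU}\sma\id_{X}\colon X\to MU\sma X$ be the canonical map, and let $\psi=\mu_{MU\sma X}\circ(\tau\sma i)\colon MU\sma MU\to MU\sma X$ be the $E_{\infty}$ ring map out of the coproduct $MU\sma MU$ of two copies of $MU$ in $E_{\infty}$ ring spectra that restricts to $\tau$ on the first copy and to $i$ on the second. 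The key claim is that $\psi$ is a weak equivalence of $E_{\infty}$ ring spectra. To see it, unwinding the definitions shows that $\psi$ agrees, as a map of underlying spectra, with the symmetry of $MU\sma MU$ followed by the composite $(\mu_{MU}\sma\id_{X})\circ(\id_{MU}\sma\tau)\colon MU\sma MU\to MU\sma MU\sma X\to MU\sma X$, and this composite is exactly the Thom isomorphism $MU\sma MU\homeq MU\sma BU_{+}$ attached to the canonical $MU$-orientation of the universal bundle over $BU$, whose Thom diagonal is $\tau$. Since the symmetry is an equivalence, $\psi$ is an equivalence of spectra, hence of $E_{\infty}$ ring spectra.

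With the claim in hand, the rest is formal. Because $MU$ and $X$ are $E_{\infty}$ ring spectra and a smash product of $E_{\infty}$ ring spectra is a coproduct in $E_{n}$ ring spectra, restriction along the canonical maps gives natural weak equivalences $\EnRing(MU\sma MU,R)\homeq\EnRing(MU,R)\times\EnRing(MU,R)$ (via the unit maps $\eta_{L},\eta_{R}\colon MU\to MU\sma MU$) and $\EnRing(MU\sma X,R)\homeq\EnRing(MU,R)\times\EnRing(X,R)$ (via $i$ and $j$), where a pair $(g,f)$ in the second corresponds to the $E_{n}$ ring map $\mu\circ(g\sma f)\colon MU\sma X\to R$; this last formula is where the hypothesis that $R$ is $E_{n+1}$ enters, through the fact that then $\mu$ is an $E_{n}$ ring map. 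Since $\psi$ restricts to $\tau$ and to $i$ on the two copies of $MU$, precomposition with $\psi$ becomes, under these identifications, the map $\Psi$ from $\EnRing(MU,R)\times\EnRing(X,R)$ to $\EnRing(MU,R)\times\EnRing(MU,R)$ sending $(g,f)$ to $(\Phi_{g}(f),g)$, which is a weak equivalence by the claim. Finally, $\Psi$ intertwines the projection $(g,f)\mapsto g$ of its source with the projection onto the second factor of its target, so it is a weak equivalence over $\EnRing(MU,R)$; the homotopy fibers of these two projections over $\sigma$ are $\EnRing(X,R)$ and $\EnRing(MU,R)$, and the map induced by $\Psi$ on them is precisely $\Phi_{\sigma}$. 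Hence $\Phi_{\sigma}$ is a weak equivalence, proving the theorem.

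The crux---the only step that is not bookkeeping---is the claim that $\psi$ is an equivalence. Carrying it out requires setting up the Thom diagonal $\tau$, the unit section $i$, and the orientation Thom isomorphism within one point-set category of $S$-modules in which $MU\sma X$ is strictly commutative, and then recognizing $\psi$, which is built with no reference to any orientation, as the classical Thom isomorphism composed with the symmetry; this recognition is where complex orientability of $MU$ is used, and it amounts to the classical isomorphism $MU_{*}MU\iso MU_{*}BU_{+}$ together with its standard Hopf-algebroid formulas. The remaining ingredients---that $\EnRing$ mapping spaces carry coproducts of $E_{\infty}$ ring spectra to products, and that a weak equivalence over a base restricts to weak equivalences of homotopy fibers---are standard, provided one keeps track of which operadic structures are restricted to which.
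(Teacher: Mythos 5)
There is a genuine gap, and it sits exactly in the step you describe as ``formal bookkeeping.'' The smash product of two $E_{\infty}$ ring spectra is the coproduct in $E_{\infty}$ ring spectra (commutative $S$-algebras), \emph{not} in $E_{n}$ ring spectra for finite $n$: the coproduct of $A$ and $B$ in $\Spectra[\oC_{n}]$ is a much larger object than $A\sma B$. Accordingly, the map $\EnRing(A,R)\times\EnRing(B,R)\to\EnRing(A\sma B,R)$, $(g,f)\mapsto\mu\circ(g\sma f)$, is well defined when $R$ is $E_{n+1}$ (that is all the interchange argument gives you), but it is not a weak equivalence, so neither of your two product decompositions holds. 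Concretely, take $n=1$ and $A=B=\Sigma^{\infty}_{+}\bZ$, so that $A\sma B\iso\Sigma^{\infty}_{+}\bZ^{2}$ with the diagonal $E_{1}$ structure. For $R$ an $E_{2}$ ring spectrum, the adjunction of \cite[IV.1.8]{mayeinf} identifies $\EnRing[1](\Sigma^{\infty}_{+}\bZ,R)$ with $\Top_{*}(S^{1},BGL_{1}R)$ and $\EnRing[1](\Sigma^{\infty}_{+}\bZ^{2},R)$ with $\Top_{*}(T^{2},BGL_{1}R)$; restriction along $S^{1}\vee S^{1}\to T^{2}$ is a fibration with fiber $\Top_{*}(S^{2},BGL_{1}R)\simeq\Omega GL_{1}R$, which is not contractible whenever $R$ has nonzero positive homotopy groups. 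The missing data is exactly the commuting homotopy (and its higher coherences) between the images of the two factors: an $E_{n}$ map out of a smash product carries this as extra structure, and it is not determined by the two restrictions even when the target is $E_{n+1}$ or $E_{\infty}$. Your key claim that $\psi\colon MU\sma MU\to MU\sma X$ is an equivalence is correct (it is the Thom isomorphism for the identity orientation of $MU$), but the mechanism you use to convert it into a statement about mapping spaces is false, so the proof does not go through.

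The paper's argument keeps the same overall shape (fix an orientation, invoke the Thom isomorphism) but replaces the coproduct reasoning by a free/forgetful adjunction in which one smash factor plays the role of \emph{scalars} rather than of a second coproduct summand. After rectifying $R$ to a $\oC_{n}\otimes\Ass$-algebra, $R\sma(-)$ is the free functor from $E_{n}$ ring spectra to $R$-modules in $E_{n}$ ring spectra, giving $\EnRing(E,R)\simeq\Modps(R\sma E,R)$; the Thom isomorphism actually used is $R\sma MU\simeq R\sma\Sigma^{\infty}_{+}BU$ as $R$-modules (induced by $(\sigma\sma\id)\circ\tau$ for the chosen $\sigma$), not $MU\sma MU\simeq MU\sma X$. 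Applying the adjunction on both sides of that equivalence yields the theorem; note that it only detaches the $R$ factor, which is legitimate, and never asserts that a mapping space out of a smash product splits as a product. If you want to salvage your $\psi$, base-change it along $\sigma$ to get the map $R\sma MU\to R\sma X$ --- but at that point you are running the paper's argument and still need the module category machinery.
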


The analogous theorem also holds for $MSO$ and $BSO$, and in general
for any Thom spectrum $M$ of an $E_{n}$ stable spherical (quasi)fibration
$X\to BF$ (see Theorem~\ref{thmgenthom}).  To be precise, the spaces
of $E_{n}$ ring maps $\EnRing(-,R)$ in the previous theorem are the
derived mapping spaces, i.e., the
homotopy types of the mapping spaces in the homotopy categories,
represented for example by the point set mapping space between a
cofibrant replacement (in the domain) and a fibrant replacement (in
the codomain) in a simplicial or topological model category of $E_{n}$
ring spectra.

We can identify the space of $E_{n}$ ring maps
$\Sigma^{\infty}_{+}BU\to R$ in more familiar terms.  Let $SL_{1}R$
denote the component of the zeroth space of $R$ corresponding to the
muliplicative identity element $1$ in $\pi_{0}R$.  Since $R$ is (in
particular) an
$E_{n}$ ring spectrum, the $E_{n}$ multiplication on $R$ induces an
$E_{n}$ structure on $SL_{1}R$.  The space of $E_{n}$ ring maps
$\Sigma^{\infty}_{+}BU\to R$ can be identified as the space of $E_{n}$
maps from $BU$ to $SL_{1}R$ \cite[IV.1.8]{mayeinf}, which is just the
space $\Top_{*}(B^{n}BU,B^{n}SL_{1}R)$ of based maps of topological
spaces $B^{n}BU\to B^{n}SL_{1}R$ (where $B^{n}$ denotes an $n$-fold
delooping functor).

In the case when $R$ is an $E_{\infty}$ ring spectrum, $SL_{1}R$ and
$B^{n}SL_{1}R$ are infinite loop spaces, and the Atiyah-Hirzebruch
spectral sequence
\begin{multline*}
E^{s,t}_{2} = H^{s}(B^{n}BU,\pi_{t}B^{n}SL_{1}R)
= H^{s}(B^{n}BU,\pi^{+}_{t-n}R)
\quad\Longrightarrow\\
\pi_{t-s}\Top_{*}(B^{n}BU,B^{n}SL_{1}R)=
\pi_{t-s}\EnRing(\Sigma^{\infty}_{+}BU,R)
\end{multline*}
calculates
the homotopy groups of $\EnRing(\Sigma^{\infty}_{+}BU,R)$.  Note that
$\pi_{t}B^{n}SL_{1}R=\pi_{t-n}SL_{1}R$ is $\pi_{t-n}R$ for $t-n>0$ and $0$ for
$t-n\leq 0$, and we use the notation $\pi^{+}_{t}R:=\pi_{t}SL_{1}R$ for these
groups.

When $R$ is just an $E_{n+1}$ ring spectrum as in
Theorem~\ref{thmThomIso},
$B^{n}SL_{1}R$ is a loop space, and the Postnikov tower of $B^{n}SL_{1}R$
is a sequence of principal fibrations of loop spaces of the form
\[
(B^{n}SL_{1}R)_{t}\to (B^{n}SL_{1}R)_{t-1}\to K(\pi^{+}_{t-n}R,t+1).
\]
Mapping $B^{n}BU$ into the tower $B^{n}SL_{1}R_{t}$ in the category
of based spaces,
we get a
tower of principal fibrations of loop spaces
\begin{multline*}
\Top_{*}(B^{n}BU,(B^{n}SL_{1}R)_{t})\to
\Top_{*}(B^{n}BU,(B^{n}SL_{1}R)_{t-1})\\
\to
\Top_{*}(B^{n}BU,K(\pi^{+}_{t-n}R,t+1))
\end{multline*}
with homotopy
limit weakly equivalent to $\Top_{*}(B^{n}BU,B^{n}SL_{1}R)$.  This
then again gives a spectral sequence for calculating the homotopy
groups of $\EnRing(\Sigma^{\infty}_{+}BU,R)$, whose $E_{2}$ term is
again
\[
E^{s,t}_{2}=H^{s}(B^{n}BU,\pi^{+}_{t-n}R)
\]
and which generalizes the Atiyah-Hirzebruch spectral sequence
displayed above.

In the case when $R$ is just an $E_{n}$ ring spectrum,
Theorem~\ref{thmThomIso} does not apply; nevertheless, we can identify
$\EnRing(MU,R)$ as the homotopy limit of a tower of principal
fibrations, using the Postnikov tower of $R$ in the category of
$E_{n}$ ring spectra (after replacing $R$ with its connective cover,
if necessary).  Basterra and the second author studied this tower in
\cite{bmthh,bmbp}, and using the work there, we prove the following
theorem in Section~\ref{secPT}.

\begin{thm}\label{thmPT}
Let $R$ be an $E_{n}$ ring spectrum, and if $n=1$, assume that
$\pi_{0}R$ is commutative.  Then the space of $E_{n}$ ring maps
from $MU$ to $R$ is weakly equivalent to the homotopy limit of
a tower of principal fibrations of the form
\[
\EnRing(MU,R_{q})\to \EnRing(MU,R_{q-1})\to \Top_{*}(B^{n}BU,K(\pi_{q}R,q+n+1))
\]
for $q\geq 1$.
\end{thm}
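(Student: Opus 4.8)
The plan is to build the tower from the Postnikov tower of $R$ in the category of $E_{n}$ ring spectra, following the machinery of Basterra--Mandell \cite{bmthh,bmbp}. First I would replace $R$ by its connective cover, which is harmless since an $E_{n}$ ring map $MU\to R$ factors uniquely through $\tau_{\geq 0}R$ (as $MU$ is connective) and the space of such maps is unchanged. Then $R$ has a Postnikov tower $R\to \cdots \to R_{q}\to R_{q-1}\to\cdots$ in $E_{n}$ ring spectra, with $R\homeq \holim_{q} R_{q}$, each $R_{q}\to R_{q-1}$ a fibration of $E_{n}$ ring spectra, and the fiber an Eilenberg--Mac Lane spectrum $H\pi_{q}R$ (shifted appropriately). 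Applying $\EnRing(MU,-)$ gives a tower of mapping spaces whose homotopy limit is $\EnRing(MU,R)$; the content is to identify this as a tower of \emph{principal} fibrations with the stated fibers.

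The key input is that in $E_{n}$ ring spectra the Postnikov stage $R_{q}\to R_{q-1}$ is a principal fibration classified by a $k$-invariant living in topological Quillen cohomology (i.e., $E_{n}$ ring $TAQ$-cohomology) of $R_{q-1}$ with coefficients in $\Sigma^{q+1}H\pi_{q}R$; this is precisely what Basterra--Mandell establish. Pulling back along an $E_{n}$ ring map $MU\to R_{q-1}$, the fiber of $\EnRing(MU,R_{q})\to\EnRing(MU,R_{q-1})$ over that point is identified with the mapping space into the fiber, namely $\EnRing$-maps $MU\to H\pi_{q}R$ lying over the trivial map to $R_{q-1}$. The square-zero (derivation) nature of this fiber, together with the $TAQ$ description, lets one compute it: derived $E_{n}$ $TAQ$-cohomology of $MU$ with coefficients in $H\pi_{q}R$ is computed by the $E_{n}$ topological André--Quillen homology of $MU$, which by the Thom-spectrum/free-$E_{n}$-algebra description of $MU$ (compare the Thom isomorphism philosophy already used for Theorem~\ref{thmThomIso}) is the $n$-fold suspension of $\Sigma^{\infty}_{+}BU$ smashed down appropriately, so the relevant mapping space becomes $\Top_{*}(B^{n}BU, K(\pi_{q}R, q+n+1))$. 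Thus the fibration sequence takes the asserted form
\[
\EnRing(MU,R_{q})\to \EnRing(MU,R_{q-1})\to \Top_{*}(B^{n}BU,K(\pi_{q}R,q+n+1)),
\]
and it is principal because it is pulled back from the principal Postnikov fibration in $E_{n}$ ring spectra.

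The main obstacle is the identification of the $E_{n}$ topological André--Quillen homology (equivalently, the stabilization in $E_{n}$ ring spectra) of $MU$ with $\Sigma^{\infty}_{+}BU$ up to the appropriate shift, and the compatibility of this identification with the tower maps. Concretely one must show that the "cotangent complex'' of $MU$ as an $E_{n}$ ring spectrum is the $MU$-module $MU\sma \Sigma^{n}(\Sigma^{\infty}BU)$ (or its relevant truncations), using that $MU$ arises as a Thom spectrum of an $E_{n}$-map $BU\to BF$ and that $TAQ$ of a Thom spectrum is computed by the Thom spectrum of the corresponding stabilized bundle; this is the step where the Thom-isomorphism argument of Section~\ref{secThomIso} must be upgraded from $E_{n}$ ring \emph{maps} to $E_{n}$ $TAQ$, and where the hypothesis ``if $n=1$, assume $\pi_{0}R$ is commutative'' enters (for $n=1$ the relevant André--Quillen theory is that of associative $S$-algebras and one needs commutativity of $\pi_{0}R$ for the $H\pi_{q}R$ to be an $MU$-bimodule in the right way). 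Once this is in hand, the spectral-sequence/tower formalism is formal, exactly paralleling the $E_{n+1}$ case discussed above but with the Postnikov tower taken internally to $E_{n}$ ring spectra rather than in spaces.
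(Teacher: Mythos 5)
Your architecture matches the paper's: reduce to the connective cover, take the Basterra--Mandell Postnikov tower of $\bar R$ in $E_{n}$ ring spectra (over $H=H\pi_{0}R$), and obtain the tower of principal fibrations by mapping $MU$ into the $k$-invariant squares. The divergence is at the key step, the identification of the base of each fibration with $\Top_{*}(B^{n}BU,K(\pi_{q}R,q+n+1))$. You propose to do this by computing the $E_{n}$ cotangent complex of $MU$ itself, via an ``$E_{n}$ TAQ of a Thom spectrum'' theorem, and you correctly flag this as the main obstacle --- but you do not resolve it, and no such theorem is proved in the paper or available in the cited literature in the $E_{n}$ setting. That is a genuine gap as written: the whole content of the theorem is concentrated in exactly that unproved identification.

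The paper's maneuver avoids computing $TAQ^{E_{n}}(MU)$ altogether. The base of the $q$-th principal fibration is $\EnRing(MU,H\vee\Sigma^{q+1}H\pi_{q}R)$, and the square-zero extension $H\vee\Sigma^{q+1}H\pi_{q}R$ is an $E_{\infty}$ (in particular $E_{n+1}$) ring spectrum; so Theorem~\ref{thmgenthom}, already established, replaces $MU$ by $\Sigma^{\infty}_{+}BU$ in that mapping space. The stabilization computation is then only needed for the suspension spectrum, where it is immediate: the iterated bar construction of \cite{bmthh} commutes with $\Sigma^{\infty}_{+}$, so $I^{\rd}B^{n}\Sigma^{\infty}_{+}BU\simeq\Sigma^{\infty}B^{n}BU$ and the adjunctions of \cite[7.1,7.2]{bmthh} give the stated target. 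If you want to keep your route, you would essentially have to prove the $E_{n}$ Thom-TAQ theorem by the same Thom-isomorphism argument anyway, so the paper's order of operations is the economical one. A smaller point: the hypothesis that $\pi_{0}R$ be commutative when $n=1$ is used so that $H\pi_{0}R$ can be taken to be an $E_{\infty}$ ring spectrum --- which is what makes the square-zero extensions $E_{\infty}$ and lets Theorem~\ref{thmgenthom} apply to them, and what places the tower in the setting of \cite[\S4]{bmbp} --- not only for the bimodule issue you mention.
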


We can think of the previous theorem as giving an ``obstructed
spectral sequence'' (cf.~\cite{bouscosobs}) of the form
\[
E^{s,t}_{2}= H^{s}(B^{n}BU,\pi^{+}_{t-n}R)
\quad\Longrightarrow\quad
\pi_{t-s}\EnRing(MU,R)
\]
(for $t=q+n$).
In particular, it then gives an approach to calculating
$\pi_{0}\EnRing(MU,R)$, which we apply (in the generalized form of
Theorem~\ref{thmgenpt}) in Section~\ref{secMU} to prove
Theorems~\ref{thmMSO} and~\ref{thmMU}.

The discussion above shows why the cases of $E_{2}$ and $E_{4}$ maps
are the most tractable: For $n=2$ and $n=4$, $H^{*}(B^{n}BU)$ consists
of finitely generated free abelian groups and is concentrated in even
degrees.  In particular, when $\pi_{*}R$ is concentrated in even
degrees, the obstructions to lifting maps up the Postnikov tower
vanish and we can compute $\pi_{0}\EnRing (MU,R)$ as
\[
H^{n+2}(B^{n}BU,\pi_{2}R)\times
H^{n+4}(B^{n}BU,\pi_{4}R)\times \dotsb
\]
(as a set: $\pi_{0}\EnRing (MU,R)$ has no natural structure).
We cannot expect this to hold in general if $n\neq 2,4$.

\subsection*{Conventions}
Throughout this paper, the word ``space'' means compactly generated
weak Hausdorff space and $\Top$ denotes the category of such spaces.
We work in one of the modern categories of spectra, either symmetric
spectra (of spaces), orthogonal spectra, or EKMM $S$-modules and we
have written the details so that they work in any one of these
categories when no one is specified.  The word ``spectra'' means
(objects in) any one of these categories, and we write ``LMS spectra''
for (objects in) the category called spectra in \cite{lms}.

\subsection*{Acknowledgments}
Parts of this paper formed part of the first author's 2012 PhD thesis
at Indiana University \cite{thesis}.  He would like to warmly thank the senior author for his kind mentorship and ongoing guidance.  The authors would like to thank
Matt Ando, Maria Basterra, Andrew Blumberg, Mike Hopkins, Niles
Johnson, Nitu Kitchloo, and Ayelet Lindenstrauss for helpful conversations and remarks.

\section{The Homotopy Theory of $E_{n}$ Ring Spectra}
\label{secmodel}

We use this section to review the background on the homotopy
theory of $E_{n}$ ring spectra that we need for later sections.  Most
of the review consists of recording facts about model categories of
operadic algebras that are well-known to experts but scattered through
the literature and difficult to find in the precise form we need.  We
claim no originality for theorems in this section.

For much of the work in this paper, we take ``$E_{n}$ ring spectrum'' to
mean an algebra over the Boardman-Vogt little $n$-cubes operad
$\oC_{n}$ \cite[2.49]{BV}, \cite[\S4]{MayGILS} of spaces; however, in
part of Section~\ref{secThomIso}, we work instead with an $E_{n+1}$
ring spectrum that is an algebra over the tensor product operad
$\oC_{n}\otimes \Ass$.  The first background result we need is
therefore the well-known fact that we can model the homotopy category
of $E_{n}$ ring spectra using any $E_{n}$ operad, i.e., any
$\Sigma$-free operad (with paracompact Hausdorf underlying spaces)
that is weakly equivalent through operad maps to $\oC_{n}$.  The
following two theorems proved in Section~\ref{secpfmodel} establish
this fact.

\begin{thm}\label{thmOpModCat}
Let $\Spectra$ denote either the model category $\Sigma_{*}\aS$ of symmetric
spectra or the model category $\aI\aS$ of orthogonal spectra with their
positive stable model structures \cite[\S14]{MMSS} or the model category
$\EKMM$ of EKMM $S$-modules with its standard model structure
\cite[VII\S4]{ekmm}.  Let $\oO$ be an operad in spaces.  Then the
category $\Spectra[\oO]$ of $\oO$-algebras in $\Spectra$ is a
topological closed model category with fibrations and weak
equivalences created in $\Spectra$.
\end{thm}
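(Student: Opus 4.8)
\emph{Proof sketch proposal.}

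The plan is to obtain the model structure on $\Spectra[\oO]$ by right-induction (Kan's transfer theorem) along the free--forgetful adjunction
\[
\bP\colon \Spectra \rightleftarrows \Spectra[\oO] \colon U,
\qquad
\bP X=\bigvee_{j\geq 0}\oO(j)_{+}\sma_{\Sigma_{j}}X^{\sma j}.
\]
In each of the three cases $\Spectra$ is a cofibrantly generated topological model category; $U$ creates limits and filtered colimits, and $\Spectra[\oO]$ is bicomplete and locally presentable; and the sets $\bP I$, $\bP J$ of images of the generating cofibrations and generating acyclic cofibrations of $\Spectra$ permit the small object argument. The transfer theorem then produces the asserted model structure --- cofibrantly generated by $\bP I$ and $\bP J$, with fibrations and weak equivalences created by $U$ --- once one checks its one substantive hypothesis: that $U$ carries every relative $\bP J$-cell complex to a weak equivalence in $\Spectra$. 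Since $U$ preserves transfinite composites of cobase changes, this reduces to showing that for $X\to Y$ in $J$ the map $U(A\to A\sqcup_{\bP X}\bP Y)$ is a weak equivalence (indeed a cofibration) in $\Spectra$.

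The hard part, and the one place where the choice of model structure is essential, is exactly this last point. First I would invoke the standard filtration of a pushout along a free map (as developed in \cite{ekmm} for $S$-modules and in its analogues for diagram spectra): it expresses $A\to A\sqcup_{\bP X}\bP Y$ in $\Spectra$ as a sequential composite $A=B_{0}\to B_{1}\to\cdots$ whose successive stages $B_{m-1}\to B_{m}$ are cobase changes in $\Spectra$ of maps obtained by applying a functor of the form $T_{+}\sma_{\Sigma_{m}}(-)$, for $T$ a $\Sigma_{m}$-space built from $\oO$ and smash powers of $A$, to the $\Sigma_{m}$-equivariant latching map of the $m$-fold iterated pushout-product of $X\to Y$ with itself. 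If $X\to Y$ is an acyclic cofibration, each such iterated pushout-product is again an acyclic cofibration, so the remaining issue is that the orbit construction $T_{+}\sma_{\Sigma_{m}}(-)$ should preserve weak equivalences between the objects that occur. This is where the positive model structures enter: for $\Sigma_{*}\aS$ and $\aI\aS$, a cofibration between cofibrant spectra becomes, after iterated smash products, a symmetric-group-equivariant cofibration on which the group acts freely away from the basepoint in every spectrum level, so that $T_{+}\sma_{\Sigma_{m}}(-)$ computes a homotopy-orbit construction and preserves weak equivalences between such objects, \emph{with no hypothesis of $\Sigma$-freeness on $\oO$}. (This freeness fails in the absolute stable model structures, where e.g.\ $S^{\sma m}=S$ carries the trivial $\Sigma_{m}$-action.) For $\EKMM$ the analogous statement holds because iterated smash powers of cofibrant $S$-modules are cofibrant as equivariant $S$-modules, so that orbit constructions remain homotopically correct \cite[III, VII]{ekmm}. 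Feeding this into the filtration makes each $B_{m-1}\to B_{m}$ an acyclic cofibration in $\Spectra$, hence $A\to A\sqcup_{\bP X}\bP Y$ a weak equivalence, and transfinite composites of such maps are again weak equivalences since acyclic cofibrations in $\Spectra$ are closed under transfinite composition.

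It remains to check that $\Spectra[\oO]$ is topological. The $\Top$-enrichment is the evident subspace $\Spectra[\oO](A,B)\subseteq\Spectra(UA,UB)$ of structure-preserving maps; the cotensor $F(K,A)$ has underlying spectrum the $\Top$-cotensor $F(K,UA)$ in $\Spectra$, with $\oO$-action induced through the diagonal of $K$; and the tensor $A\otimes K$ exists as the left adjoint of $F(K,-)$. The pushout-product (SM7) axiom relative to $\Top$ is then inherited from $\Spectra$: for a fibration $p\colon A\to B$ of $\oO$-algebras and a cofibration $K\to L$ of spaces, the pullback-corner map $F(L,A)\to F(K,A)\times_{F(K,B)}F(L,B)$ has as underlying map of spectra the corresponding pullback-corner map for $Up$, which is a fibration in $\Spectra$ (acyclic if $p$ or $K\to L$ is) since $\Spectra$ is a topological model category; as fibrations and acyclic fibrations of $\oO$-algebras are created by $U$, the same holds in $\Spectra[\oO]$. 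Of all of this, the genuinely delicate step is the equivariant-smash-power input of the previous paragraph --- which is precisely why the positive model structures must be used on symmetric and orthogonal spectra; the rest is a formal application of the transfer machinery together with the properties of $\Spectra$.
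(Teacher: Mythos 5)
Your proposal follows essentially the same route as the paper: the model structure is lifted along the free--forgetful adjunction (the paper invokes the topological lifting criterion of \cite[5.13]{MMSS}), and the substantive verification is done by filtering the pushout $C\amalg_{\rO X}\rO Y$ by powers of $Y$ via the universal enveloping operad, with the positive model structure supplying the homotopical control over the $\Sigma_m$-orbit constructions applied to smash powers (\cite[15.5]{MMSS}) and the analogous fact for $S$-modules in the EKMM case. One correction: $\Spectra[\oO]$ is not locally presentable (categories built on compactly generated spaces are not), so smallness cannot be obtained that way; the standard substitute is the ``Cofibration Hypothesis''-type condition that pushouts of coproducts of generating cofibrations are $h$-cofibrations in $\Spectra$, which is exactly part~(i) of the paper's Lemma~\ref{lemModel} and is proved with the same filtration you describe --- so your sketch already contains the needed input, it is just mislabeled. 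A minor difference of route: for EKMM $S$-modules the paper handles the acyclic case more simply, observing that the generating acyclic cofibrations are inclusions of deformation retracts, so the pushout of algebras is one as well, with no need for the equivariant smash-power analysis there.
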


\begin{thm}\label{thmChangeOp}
For $\Spectra$ as in Theorem~\ref{thmOpModCat}, and $\phi\colon \oO\to \oO'$ a
map of operads, the pushforward
(left Kan extension) and pullback functors
\[
L_{\phi}\colon \Spectra [\oO]\tofrom \Spectra[\oO']\noloc R_{\phi}
\]
form a Quillen adjunction, which is a Quillen equivalence if (and only
if) each $\phi(n)\colon \oO(n)\to \oO'(n)$ is a (non-equivariant)
stable equivalence.
\end{thm}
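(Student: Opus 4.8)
The plan is to deal with the Quillen adjunction formally and then reduce the equivalence assertion to a computation with free algebras. First, the adjunction: the pullback $R_\phi$ restricts operad structure along $\phi$ and does not alter underlying spectra, so since by Theorem~\ref{thmOpModCat} the fibrations and weak equivalences of both $\Spectra[\oO]$ and $\Spectra[\oO']$ are created in $\Spectra$, the functor $R_\phi$ preserves --- indeed creates --- fibrations, acyclic fibrations, and weak equivalences; hence $R_\phi$ is right Quillen and $(L_\phi,R_\phi)$ is a Quillen adjunction (the left adjoint $L_\phi A=\oO'\circ_{\oO}A$ exists because smash commutes with colimits). For the equivalence, note that $R_\phi$ moreover detects weak equivalences between all objects, a fortiori between fibrant ones, so by the standard recognition criterion $(L_\phi,R_\phi)$ is a Quillen equivalence if and only if for every cofibrant $\oO$-algebra $A$ the unit $A\to R_\phi L_\phi A$ is a weak equivalence (this represents the derived unit, since $L_\phi$ preserves cofibrancy and $R_\phi$ preserves weak equivalences); as $R_\phi$ forgets to underlying spectra, this says precisely that the unit map $A\to L_\phi A$ of underlying spectra is a stable equivalence for every cofibrant $\oO$-algebra $A$.

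For the ``if'' direction, assume each $\phi(n)$ is a nonequivariant stable equivalence. Every cofibrant $\oO$-algebra is a retract of one built by a transfinite composite of cobase changes of the generating cofibrations $\mathbb{O}(i)$, with $i$ a generating cofibration of the positive stable model structure on $\Spectra$; since $L_\phi$ commutes with these colimits and carries $\mathbb{O}(-)$ to $\mathbb{O}'(-)$, and since stable equivalences are closed under retracts and transfinite composites of the relevant cofibrations, it suffices by induction to treat one cobase change $B=A\amalg_{\mathbb{O}(X)}\mathbb{O}(Y)$ along a generating positive cofibration $X\to Y$, given that $A\to L_\phi A$ is already an underlying stable equivalence. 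Here one uses the standard ``pushout--product filtration'' of the underlying spectrum of $B$, natural in the operad, whose subquotients have the form $\oO(n)_+\sma_{\Sigma_n}Q^n(A,X,Y)$ for $\Sigma_n$-spectra $Q^n(A,X,Y)$ assembled from $A$ and the iterated pushout products of $X\to Y$. The unit $B\to L_\phi B$ respects this filtration, and on each subquotient it is $\phi(n)_+\sma_{\Sigma_n}\id$ up to the equivalence already known for $A$; a gluing-lemma argument up the filtration then gives that $B\to L_\phi B$ is a stable equivalence, provided one knows that each filtration map is an underlying cofibration and that $(-)_+\sma_{\Sigma_n}Q^n(A,X,Y)$ carries the nonequivariant equivalence $\phi(n)_+$ to a stable equivalence.

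For the ``only if'' direction, apply the criterion just established to the cofibrant free algebra $A=\mathbb{O}(F_1S^{0})$ on the positive-cofibrant spectrum $F_1S^{0}$: its unit is the wedge over $n$ of the maps $\oO(n)_+\sma_{\Sigma_n}(F_1S^{0})^{\sma n}\to\oO'(n)_+\sma_{\Sigma_n}(F_1S^{0})^{\sma n}$, so each of these is a stable equivalence; since $(F_1S^{0})^{\sma n}$ has free levelwise $\Sigma_n$-action, smashing with it over $\Sigma_n$ reflects nonequivariant equivalences of $\Sigma_n$-spaces, forcing each $\phi(n)$ to be a nonequivariant stable equivalence. The crux --- and the reason the argument requires the \emph{positive} stable model structures rather than the ordinary ones --- is the input used twice above: that the $\Sigma_n$-spectra obtained from smash powers and, more generally, from pushout products of positive cofibrations have a ``$\Sigma_n$-flat'' (in particular levelwise free) homotopy type, so that smashing over $\Sigma_n$ is homotopically well behaved. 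This must be verified uniformly for symmetric spectra, orthogonal spectra, and EKMM $S$-modules; for $S$-modules it is essentially \cite[Ch.~III]{ekmm}, and for the two spectra categories it is the familiar technology of the positive model structures --- and it is precisely this point that fails for the ordinary stable model structures, where the theorem itself is false.
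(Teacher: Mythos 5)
Your handling of the Quillen adjunction, the reduction of ``Quillen equivalence'' to ``the unit $A\to R_{\phi}L_{\phi}A$ is a weak equivalence for every cofibrant $A$'', and the cellular induction over a filtration of pushouts for the ``if'' direction all track the paper's argument: the paper proves a more general criterion ($\rO X\to \rO' X$ a weak equivalence for all cofibrant $X$) by exactly such an induction, with the $\Sigma_{n}$-flatness of smash powers of positive cofibrations supplied by a mild generalization of \cite[15.5]{MMSS} (and by \cite[III.5.1]{ekmm} for $S$-modules). One soft spot in your ``if'' step: on the associated graded of the filtration the unit map is $\bigl(U_{\oO}A(n)\to U_{\oO'}(L_{\phi}A)(n)\bigr)\sma_{\Sigma_{n}}\id_{(Y/X)^{(n)}}$, where $U_{\oO}A$ is the enveloping operad; knowing this is an equivalence is not simply ``$\phi(n)$ up to the equivalence already known for $A$'' but requires its own inductive control of $U_{\oO}A(n)$. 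The paper finesses this by recognizing the total associated graded as the free extension $A\amalg\rO(Y/X)$, itself an $\oO$-algebra built in no more stages than $A$, so the outer inductive hypothesis applies to it directly.

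The genuine gap is in the ``only if'' direction. You take the free algebra on a single positive-cofibrant sphere and assert that, because $(F_{1}S^{0})^{\sma n}$ has levelwise free $\Sigma_{n}$-action, the functor $(-)\sma_{\Sigma_{n}}(F_{1}S^{0})^{\sma n}$ \emph{reflects} non-equivariant equivalences of $\Sigma_{n}$-spaces. Levelwise freeness makes this functor a (twisted) homotopy-orbit construction, which is why it \emph{preserves} non-equivariant equivalences; it does not make it conservative. Homotopy orbits are not conservative in general: for instance, for $\Sigma_{2}$ acting on a Moore space $M(\bZ[1/2],1)$ through a degree $-1$ self-map, the reduced homotopy orbit spectrum is trivial (the coinvariants of the sign action on $\bZ[1/2]$ vanish and the higher group homology vanishes away from $2$) while the underlying spectrum is not. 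Since $\oO(n)$ carries no freeness or cellularity hypothesis in this theorem, your argument does not force $\phi(n)$ to be a stable equivalence. The paper's fix is to take the free algebra on a wedge $X=\bigvee S^{0}_{c}$ of at least $n$ cofibrant $0$-spheres: the summands of $X^{(n)}$ indexed by injections form a free $\Sigma_{n}$-set, so $\rO X$ contains $\oO(n)_{+}\sma (S^{0}_{c})^{(n)}\simeq \Sigma^{\infty}_{+}\oO(n)$ --- with no $\Sigma_{n}$-quotient --- as a wedge summand, and a weak equivalence of wedges restricts to a weak equivalence on each summand by the retract axiom.
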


In the course of proving the previous theorems, we develop the tools
needed to deduce the following useful technical result. Note that the
initial $\oO$-algebra is $\oO(0)_{+}\sma S$.

\begin{thm}\label{thmOpCof}
Let $\Spectra$ and $\oO$ be as in Theorem~\ref{thmOpModCat}, and
assume that each $\oO(n)$ is a retract of a free $\Sigma_{n}$-cell
complex. If $A$ is a cofibrant $\oO$-algebra, then $\oO(0)_{+}\sma
S\to A$ is a cofibration in $\Spectra$.
\end{thm}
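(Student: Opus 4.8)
The plan is to reduce to the case of a cofibrant $\oO$-algebra built up as a (possibly transfinite) sequential colimit of pushouts of generating cofibrations of $\Spectra[\oO]$, and then to check the cofibration condition one pushout at a time, using the explicit description of free $\oO$-algebra pushouts. First I would recall that, by Theorem~\ref{thmOpModCat} and the small object argument, any cofibrant $\oO$-algebra $A$ is a retract of a cell $\oO$-algebra $A'$, i.e. one obtained from the initial algebra $\oO(0)_{+}\sma S$ by a transfinite composition of pushouts along maps $\oO(E_{i})\to \oO(E'_{i})$ freely generated by generating cofibrations $E_{i}\to E'_{i}$ of $\Spectra$ (in the relevant positive model structure). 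Since the class of cofibrations in $\Spectra$ is closed under retracts, it suffices to show $\oO(0)_{+}\sma S\to A'$ is a cofibration in $\Spectra$; and since cofibrations are closed under transfinite composition, it suffices to treat a single stage
\[
B\longrightarrow B'=B\amalg_{\oO(E)}\oO(E'),
\]
showing that if $\oO(0)_{+}\sma S\to B$ is a cofibration in $\Spectra$ then so is $\oO(0)_{+}\sma S\to B'$, equivalently that $B\to B'$ is a cofibration in $\Spectra$.

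The key computation, which I expect to be supplied by the machinery developed for Theorems~\ref{thmOpModCat} and~\ref{thmChangeOp}, is the standard filtration of such a free extension $B\to B'$: there is a sequential filtration $B=F_{0}B'\to F_{1}B'\to\cdots$ with $B'=\colim_{k}F_{k}B'$ and pushout squares in $\Spectra$
\[
\xymatrix{
\oO(k)_{+}\sma_{\Sigma_{k}}\bigl(Q^{k-1}(E\to E')\bigr)\ar[r]\ar[d]
  & F_{k-1}B'\ar[d]\\
\oO(k)_{+}\sma_{\Sigma_{k}}(E')^{\sma k}\ar[r] & F_{k}B',
}
\]
where $Q^{k-1}(E\to E')$ is the $(k-1)$-st filtration of $(E')^{\sma k}$ under the iterated pushout-product (the ``latching object''), carrying its natural $\Sigma_{k}$-action, and the left vertical map is the $\Sigma_{k}$-equivariant iterated pushout-product of $E\to E'$ with itself $k$ times, which is a $\Sigma_{k}$-cofibration because $E\to E'$ is a cofibration in $\Spectra$. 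Now I would use the hypothesis: since $\oO(k)$ is a retract of a free $\Sigma_{k}$-cell complex, $\oO(k)_{+}$ is a retract of a complex built from cells $\Sigma_{k}\times D$ with $D$ a space, so $\oO(k)_{+}\sma_{\Sigma_{k}}(-)$ carries $\Sigma_{k}$-cofibrations to cofibrations (a retract of an object built from $D_{+}\sma(-)$ applied to the underlying nonequivariant cofibration). Hence each $F_{k-1}B'\to F_{k}B'$ is a cofibration in $\Spectra$, and $B\to B'$, being their transfinite composite, is a cofibration.

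The main obstacle is the bookkeeping at the bottom of the filtration and the role of $\oO(0)$. For $k=0$ the ``pushout square'' degenerates: $F_{0}B'=B$ already, with no contribution, so one must be careful that the filtration is indexed so that $F_{0}B'=B$ rather than attaching $\oO(0)_{+}\sma S$ a second time — this is exactly why the statement is phrased relative to the initial algebra $\oO(0)_{+}\sma S$ rather than relative to $S$. Likewise, to even start the induction I need the base case that the \emph{initial} algebra maps to the first cell algebra by a cofibration, which is the $k=1$ instance $\oO(0)_{+}\sma S \to \oO(0)_{+}\sma S \amalg_{\oO(E)}\oO(E')$; here the filtration has bottom term $F_{0}=\oO(0)_{+}\sma S$ and higher terms attaching $\oO(k)_{+}\sma_{\Sigma_{k}}(E')^{\sma k}$ along the latching maps, so the same argument applies verbatim. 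The only genuinely delicate point is verifying that $\oO(k)_{+}\sma_{\Sigma_{k}}(-)$ preserves the relevant cofibrations when we only assume $\oO(k)$ is a \emph{retract} of a free $\Sigma_{k}$-cell complex (not literally one); this follows because cofibrations in $\Spectra$ are closed under retracts and the functor in question commutes with the retraction, but it is the step I would write out most carefully.
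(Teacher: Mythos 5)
Your overall strategy (reduce to cell $\oO$-algebras, filter each pushout $B\to B\amalg_{\oO(E)}\oO(E')$ by powers of the attached cell, and use freeness of the $\Sigma_k$-action to convert equivariant cofibrations into nonequivariant ones) is the same as the paper's, but your filtration formula contains a genuine error that hides the main point of the argument. The pushout square you write, with left-hand column
\[
\oO(k)_{+}\sma_{\Sigma_{k}}Q^{k-1}(E\to E')\;\to\;\oO(k)_{+}\sma_{\Sigma_{k}}(E')^{\sma k},
\]
is only correct when $B$ is the initial algebra. For a general $\oO$-algebra $B$, the associated graded of $B\amalg_{\oO(E)}\oO(E')$ is \emph{not} built from $\oO(k)$ but from the universal enveloping operad $U_{\oO}B(k)$, which absorbs all the cross-terms $\oO(k+j)\sma B^{(j)}$ mixing the old algebra with the new cell (this is Proposition~\ref{propfiltpush} in the paper). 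Consequently, the step ``hence each $F_{k-1}B'\to F_{k}B'$ is a cofibration'' is unsupported from the second cell attachment onward: the hypothesis that $\oO(k)$ is a retract of a free $\Sigma_k$-cell complex says nothing directly about $U_{\oO}B(k)$.

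To close the gap you need a second, parallel induction showing that the property ``each $U_{\oO}C(k)$ is ($\Sigma_k$-equivariantly) a retract of a free $\Sigma_k$-cell complex'' is itself preserved under attaching a cell, i.e.\ that $U_{\oO}(C\amalg_{\rO X}\rO Y)$ is again a $\Sigma$-free cell retract whenever $U_{\oO}C$ is. This requires filtering the enveloping operad of the pushout as well (the paper's Proposition~\ref{propfiltpush2}), with quotients of the form $U_{\oO}C(m+n)\sma_{\Sigma_n}(Y/X)^{(n)}$ carrying a residual free $\Sigma_m$-action. The base case $U_{\oO}(\rO(*))(n)\iso\oO(n)_{+}\sma S$ is where your hypothesis on $\oO$ actually enters. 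Once this auxiliary induction is in place, the rest of your argument (retracts, transfinite composition, and the fact that smashing a free $\Sigma_k$-cell complex over $\Sigma_k$ with an equivariant cofibration yields a cofibration in $\Spectra$) goes through as you describe.
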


We need two more results geared towards using the Thom diagonal in the
context of $E_{n}$ ring spectra.  For a fibration of spaces $f\colon
B\to BF$ (where $BF$ denotes the classifying space for stable
spherical fibrations), Lewis constructed the Thom spectrum $Mf$ as an
LMS spectrum \cite[IX.3.2]{lms} and showed that when $f$ is a map of
$\oO$-spaces (for an operad $\oO$ with a map to the linear isometries
operad $\oL$), $Mf$ is naturally an $\oO$-algebra in the category of
LMS spectra \cite[IX.7.1]{lms}.  It follows that the Thom diagonal
$Mf\to Mf \sma B$ is a map of $\oO$-algebras.  Instead of re-proving
this in the context of a modern category of spectra, we just transport
this construction and this map using the following well-known
comparison theorems across the different categories of spectra.

\begin{thm}\label{thmOpComp1}
Let $\oO$ be an operad of spaces.  In the Quillen equivalences
\begin{gather*}
\bP\colon \Sigma_{*}\aS \tofrom \aI\aS\noloc \bU\\
\bN\colon \aI\aS \tofrom \EKMM\noloc \bN^{\#}
\end{gather*}
of \cite[p.~442]{MMSS} and \cite[I.1.1]{MMM}, all four functors
preserve $\oO$-algebras and induce Quillen equivalences
\begin{gather*}
\bP\colon \Sigma_{*}\aS[\oO] \tofrom \aI\aS[\oO]\noloc \bU\\
\bN\colon \aI\aS[\oO] \tofrom \EKMM[\oO]\noloc \bN^{\#}
\end{gather*}
on the categories of $\oO$-algebras.
\end{thm}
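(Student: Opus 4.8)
The plan is to push the whole statement down to the underlying categories of spectra, using the defining feature from Theorem~\ref{thmOpModCat} that weak equivalences and fibrations of $\oO$-algebras are created by the forgetful functor to $\Spectra$. First I would record that all four functors preserve $\oO$-algebras and that the two adjunctions restrict. The left adjoints $\bP$ and $\bN$ are strong symmetric monoidal and continuous, so they preserve $\oO$-algebras and, being cocontinuous, commute with the free $\oO$-algebra functor $X\mapsto\coprod_{n}\oO(n)_{+}\sma_{\Sigma_{n}}X^{\sma n}$ and with all colimits of $\oO$-algebras; their right adjoints $\bU$ and $\bN^{\#}$ are then lax symmetric monoidal, and since an $\oO$-algebra structure on $A$ is just a unital, associative, equivariant system of maps $\oO(n)_{+}\sma A^{\sma n}\to A$, any continuous lax symmetric monoidal functor pushes such a structure forward, so these two preserve $\oO$-algebras as well. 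A diagram chase with the unit and counit (which match the structure maps under the strong monoidal structures) then shows that the original adjunction isomorphisms restrict, giving the adjunctions $(\bP,\bU)$ between $\Sigma_{*}\aS[\oO]$ and $\aI\aS[\oO]$ and $(\bN,\bN^{\#})$ between $\aI\aS[\oO]$ and $\EKMM[\oO]$.

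These restricted adjunctions are Quillen adjunctions: because $\bU$ and $\bN^{\#}$ on $\oO$-algebras are computed on underlying spectra, and fibrations and acyclic fibrations of $\oO$-algebras are detected there, the right adjoints preserve (acyclic) fibrations of $\oO$-algebras as soon as they do so at the spectrum level, which is part of the hypothesis that the adjunctions of \cite[p.~442]{MMSS} and \cite[I.1.1]{MMM} are Quillen.

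For the Quillen equivalences I would use the standard criterion that a Quillen adjunction is a Quillen equivalence exactly when the right adjoint reflects weak equivalences between fibrant objects and the derived unit is a weak equivalence on every cofibrant object. The first condition is immediate, since a fibrant $\oO$-algebra is fibrant as a spectrum, weak equivalences of $\oO$-algebras are detected in $\Spectra$, and the underlying adjunctions are already Quillen equivalences. The crux — and the step I expect to be the main obstacle — is the derived unit, because a cofibrant $\oO$-algebra need not be cofibrant as a spectrum (this fails already for the commutative operad with the positive model structures), so one cannot quote the spectrum-level statement directly. I would treat it with the cellular presentation of a cofibrant $\oO$-algebra $A$ developed for Theorem~\ref{thmOpModCat}: $A$ is a retract of a sequential colimit of pushouts of free $\oO$-algebra maps on generating cofibrations of $\Spectra$, and each such pushout is filtered by spectrum-level cofibrations whose subquotients are gotten by applying $\oO(n)_{+}\sma_{\Sigma_{n}}(-)^{\sma n}$ to the relevant cofibrant spectra. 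Since $\bP$ and $\bN$ are strong symmetric monoidal and cocontinuous, they carry this entire filtration, subquotient by subquotient, to the corresponding filtration for $\bP A$, resp.\ $\bN A$, on the nose; an induction up the filtration — using that the underlying left adjoints are left Quillen equivalences and preserve weak equivalences between the flat spectra and extended powers that occur there, which is exactly the ``cofibration hypothesis'' input already needed to build the $\oO$-algebra model structure — then shows that the unit $A\to\bU(\bP A)^{\mathrm{fib}}$, resp.\ $A\to\bN^{\#}(\bN A)^{\mathrm{fib}}$, is a weak equivalence. This identifies the total derived functors as inverse equivalences of homotopy categories, which completes the proof; the argument also fits the general pattern by which a weak symmetric monoidal Quillen equivalence lifts to categories of operad algebras once the transferred model structures exist.
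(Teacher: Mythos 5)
Your proposal is correct in outline and reaches the right conclusion, but it handles the crux --- the derived unit on cofibrant $\oO$-algebras --- by a genuinely different route from the paper. The paper first invokes Theorem~\ref{thmChangeOp} to replace $\oO$ by a weakly equivalent $\Sigma$-free cell operad, so that Theorem~\ref{thmOpCof} applies and every cofibrant $\oO$-algebra is cofibrant as a spectrum under $\oO(0)_{+}\sma S$; the unit is then a weak equivalence by the spectrum-level arguments of \cite[10.3]{MMSS} and \cite[I.3.5]{MMM}, with no filtration analysis needed. You instead keep $\oO$ arbitrary and run a cellular induction directly, which is exactly the strategy the paper uses to prove Theorem~\ref{thmgenchange} (and which the authors note, in the remark following Theorem~\ref{thmgencell}, also proves a generalization of Theorem~\ref{thmOpComp1}). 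Your route is more self-contained in that it avoids the change-of-operads reduction, but it is correspondingly more delicate: the subquotients of the filtration of a pushout $C\amalg_{\rO X}\rO Y$ are not $\oO(n)_{+}\sma_{\Sigma_{n}}(Y/X)^{(n)}$ as you state but $U_{\oO}C(n)\sma_{\Sigma_{n}}(Y/X)^{(n)}$ for the universal enveloping operad $U_{\oO}C$ (Proposition~\ref{propfiltpush}), so your induction must simultaneously track the enveloping operads (as in Proposition~\ref{propfiltpush2}) and must invoke the extended-power comparison of \cite[15.5]{MMSS} and \cite[III.5.1]{ekmm} since for a non-free operad these subquotients need not be cofibrant spectra. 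The paper's detour through free operads buys a one-line finish at the cost of quoting two prior theorems; your direct induction buys a statement valid verbatim for more general operads at the cost of redoing the Theorem~\ref{thmgenchange} machinery for a new pair of adjunctions.
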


\begin{thm}\label{thmOpComp2}
Let $\oO$ be an operad of spaces.  Then the category $\LMS[\oO\times
\oL]$ of $(\oO\times \oL)$-algebras in LMS spectra is a topological
closed model category with fibrations and weak equivalences created in
LMS spectra. Moreover:
\begin{enumerate}
\item $\LMS[\oO\times \oL]$ is equivalent to the category
$\LMS[\bL][\oO]$ of $\oO$-algebras in EKMM $\bL$-spectra \cite[Ch.~I]{ekmm}.
\item The forgetful functor from EKMM $S$-modules to EKMM $\bL$-spectra and its
right adjoint $S\sma_{\oL}(-)$ both preserve $\oO$-algebras; the unit
and counit of this adjunction are both natural weak equivalences.
\item The right adjoint $F_{\oL}(S,-)\colon \EKMM\to \LMS[\bL]$ of
$S\sma_{\oL}(-)$ also preserves $\oO$-algebras; the unit and counit of
this adjunction are both natural weak equivalences.
\item The adjunction
\[
S\sma_{\oL}(-)\colon \LMS[\bL][\oO]\tofrom \EKMM[\oO]\noloc F_{\oL}(S,-)
\]
is a Quillen equivalence.
\end{enumerate}
\end{thm}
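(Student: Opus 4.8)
The plan is to deduce the theorem from the foundational work of \cite{ekmm} on $S$-modules and $\bL$-spectra, upgraded to algebras over an arbitrary operad $\oO$ of spaces by the same techniques used for Theorem~\ref{thmOpModCat} in Section~\ref{secpfmodel}. I would treat~(1) first, since it is purely formal. Recall that $\oL(1)$ generates the monad $\bL=\oL(1)\ltimes(-)$ whose algebras are the $\bL$-spectra, and that the operad structure of $\oL$ equips the category of $\bL$-spectra with the associative, commutative smash product $M\sma_{\oL}N=\oL(2)\ltimes_{\oL(1)\times\oL(1)}(M\sma N)$, with iterated products $M_{1}\sma_{\oL}\dotsb\sma_{\oL}M_{j}=\oL(j)\ltimes_{\oL(1)^{j}}(M_{1}\sma\dotsb\sma M_{j})$. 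Unwinding the definition of an algebra $X$ over the product operad $\oO\times\oL$---a system of $\Sigma_{j}$-equivariant maps $(\oO(j)\times\oL(j))\ltimes X^{(j)}\to X$ satisfying the operad axioms---one sees that the $\oL$-part is exactly a $\bL$-spectrum structure on $X$ together with the coherence data exhibiting the $\oL(j)\ltimes_{\oL(1)^{j}}X^{(j)}$ as iterated $\sma_{\oL}$-powers, and the $\oO$-part is then precisely a system of $\Sigma_{j}$-equivariant maps $\oO(j)\ltimes_{\Sigma_{j}}X^{\sma_{\oL}j}\to X$ satisfying the $\oO$-algebra axioms in $(\LMS[\bL],\sma_{\oL})$. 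This is the operadic generalization of the identification of $E_{\infty}$ (resp.\ $A_{\infty}$) ring spectra with commutative (resp.\ associative) $\sma_{\oL}$-rings in \cite[I\S8]{ekmm}, and it is an isomorphism of categories; no homotopy theory enters.

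For the model structure on $\LMS[\oO\times\oL]$, I would lift the standard model structure on LMS spectra (\cite{lms}, \cite[VII]{ekmm}) along the free--forgetful adjunction, whose monad is $\mathbb{T}X=\bigvee_{j}(\oO(j)\times\oL(j))\ltimes_{\Sigma_{j}}X^{(j)}$, following the proof of Theorem~\ref{thmOpModCat} verbatim in structure. Two things must be checked: that $\mathbb{T}$ preserves the filtered colimits needed to run the small object argument (which follows from the colimit-preservation properties of the twisted half-smash product from \cite{lms}), and the Cofibration Hypothesis---that a pushout of a coproduct of maps $\mathbb{T}(f)$ along generating acyclic cofibrations $f$ is an underlying weak equivalence. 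The Cofibration Hypothesis is the technical heart: it uses that $\oO\times\oL$ is $\Sigma$-free (because $\oL(j)$ is a free contractible $\Sigma_{j}$-space, so $\oO(j)\times\oL(j)$ is a retract of a free $\Sigma_{j}$-CW complex for every $\oO$), and it is verified by the usual filtration of the pushout together with the good homotopical behavior of $\ltimes$ and $\ltimes_{\Sigma_{j}}$ on cofibrations of LMS spectra. That fibrations and weak equivalences are created in LMS spectra, and that the resulting model category is topological, are then part of the construction.

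Turning to~(2) and~(3): the forgetful functor $\EKMM\to\LMS[\bL]$ and the functors $S\sma_{\oL}(-)$ and $F_{\oL}(S,-)$ are all lax symmetric monoidal for $\sma_{\oL}$ (\cite[I\S8, II\S1]{ekmm}; for the forgetful functor one uses that $M\sma_{\oL}N$ is already an $S$-module when $M$ and $N$ are), so all three preserve $\oO$-algebras. The assertions about units and counits are, after forgetting to LMS spectra, the statements that $\lambda\colon S\sma_{\oL}M\to M$ and the (co)units of $S\sma_{\oL}(-)\dashv F_{\oL}(S,-)$ are weak equivalences, which are \cite[I.8.7, II\S1]{ekmm}; since weak equivalences of $\oO$-algebras are detected on underlying LMS spectra, they transfer directly. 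Finally~(4): the adjunction on $\oO$-algebras is a Quillen adjunction because on underlying categories it is the Quillen pair $S\sma_{\oL}(-)\dashv F_{\oL}(S,-)$ of \cite[VII]{ekmm} while the model structures on both sides of the operadic adjunction are created in those underlying categories (by Theorem~\ref{thmOpModCat} on the $\EKMM$ side, by the construction above on the LMS side); and it is a Quillen equivalence because, by~(3), its point-set unit and counit are already weak equivalences, hence so are the derived ones on all objects.

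The step I expect to be the main obstacle is the Cofibration Hypothesis for $\LMS[\oO\times\oL]$. Unlike the modern categories of spectra, or even EKMM $S$-modules, LMS spectra carry no well-behaved internal smash product with a strict unit, so the relevant cofibrations can only be analyzed through the twisted half-smash product, and one must ensure that the filtration-of-the-pushout argument produces weak equivalences for an \emph{arbitrary} $\Sigma$-free operad of the form $\oO\times\oL$, not only for the linear isometries operad and the $A_{\infty}$ operad treated in \cite{ekmm}. A close second difficulty is the bookkeeping in~(1): one must track the unit data (the initial $(\oO\times\oL)$-algebra $\oO(0)_{+}\sma S$ with its tautological $\bL$-structure) through the identification so that it is carried to the initial $\oO$-algebra in $\bL$-spectra, and more generally verify the coherence isomorphisms relating $\oL(j)\ltimes_{\oL(1)^{j}}(-)$ to the $j$-fold $\sma_{\oL}$-power without circularity.
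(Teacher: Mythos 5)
Your proposal follows essentially the same route as the paper: part (1) is the paper's composite-of-monads identification $\rO\bL X=\bigvee((\oO(n)\times\oL(n))\ltimes X^{\barwedge n})/\Sigma_{n}$ (citing \cite[II.6.1]{ekmm}), the model structure is obtained from the same free--forgetful lifting and pushout-filtration machinery of Section~\ref{secpfmodel} (the paper runs it in $\LMS[\bL]$ and transports along (1), which is equivalent to your direct version), and (2)--(4) are exactly the lax symmetric monoidality plus \cite[I.8.5, I.8.7, VII.4.6]{ekmm} arguments you give. One small correction: the ``Cofibration Hypothesis'' step does not require $\Sigma$-freeness of $\oO\times\oL$ (that hypothesis enters only for Theorem~\ref{thmOpCof}), and in the LMS/EKMM setting the acyclic-cofibration pushout is not the technical heart you anticipate but is trivial, since the generating acyclic cofibrations are inclusions of deformation retracts and this property is preserved by the free functor and by pushout.
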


The proof of the model structure in Theorem~\ref{thmOpComp2} is given
in Section~\ref{secpfmodel} with the proof of the model structures in
Theorem~\ref{thmOpModCat}.  The proof of the remaining statements in
Theorems~\ref{thmOpComp1} and~\ref{thmOpComp2} are now easy from the
other theorems in the section.

\begin{proof}[Proof of Theorem~\ref{thmOpComp1}]
All four functors are lax symmetric monoidal and therefore preserve
operadic algebra structures.  Since fibrations on the algebra
categories are created in the underlying categories of spectra (i.e.,
in symmetric spectra, orthogonal spectra, or $S$-modules, as the case
may be), the
adjunctions on algebra categories are automatically Quillen
adjunctions.  To prove they are Quillen equivalences, by
\cite[A.2.(ii)]{MMSS}, it suffices to show that the derived functors
are equivalences of homotopy categories.  Applying
Theorem~\ref{thmChangeOp}, it suffices to consider the case when $\oO$
satisfies the hypothesis of Theorem~\ref{thmOpCof}, i.e., each
$\oO(n)$ is a $\Sigma_{n}$-cell complex.  In this case, every
cofibrant $\oO$-algebra is cofibrant in the underlying category of
spectra lying under $\oO(0)_{+}\sma S$.  Since the unit of the adjunction
is a weak equivalence for $\oO_{+}(0)\sma S$, the unit of the adjunction
is a weak equivalence for the codomain of any cofibration with domain
$\oO(0)_{+}\sma S$ (see for example the proof of \cite[10.3]{MMSS} and
the proof of \cite[I.3.5]{MMM}).  In particular the unit of the
adjunction is a weak equivalence for cofibrant $\oO$-algebras.
It then follows from \cite[A.2.(iii)]{MMSS} that the
Quillen adjunctions on algebra categories are Quillen equivalences.
\end{proof}

\begin{proof}[Proof of Theorem~\ref{thmOpComp2}.(i--iv)]
As in \cite[I\S4]{ekmm}, let $\bL$ denote the monad
$\oL(1)\ltimes(-)$ on the category of LMS spectra.  If we write $\rO$
for the free $\oO$-algebra functor on EKMM $\bL$-spectra, then
\begin{align*}
\rO\bL X
&=\bigvee_{n\geq 0}\oO(n)_{+}\sma_{\Sigma_{n}}(\bL X)^{\sma_{\oL}n}\\
&=\bigvee_{n\geq 0}\oO(n)_{+}\sma_{\Sigma_{n}}
  (\oL(n)\ltimes X^{\barwedge n})\\
&=\bigvee_{n\geq 0}((\oO(n)\times \oL(n))\ltimes X^{\barwedge n})/\Sigma_{n}
\end{align*}
is the free $(\oO\times \oL)$-algebra on $X$; an easy composite of
monads argument \cite[II.6.1]{ekmm}, then shows that the category of
$\oO$-algebras in $\LMS[\bL]$ is equivalent to the category of
$(\oO\times \oL)$-algebras in $\LMS$, which is~(i).  For (ii)
and~(iii), the adjunctions are \cite[II.1.3]{ekmm}. The fact that all
four functors are lax symmetric monoidal \cite[II.1.1]{ekmm}
shows that they preserve $\oO$-algebra structures, and the remaining
facts are \cite[I.8.5.(iii)]{ekmm} and \cite[I.8.7]{ekmm}.  For~(iv),
the adjunction is a Quillen adjunction because the adjunction on the
underlying categories $\EKMM$ and $\LMS[\bL]$ is a Quillen
adjunction \cite[VII.4.6]{ekmm}, and the adjunction is a Quillen
equivalence since both the unit and counit are weak equivalences on
all objects.
\end{proof}

\section{Proof of Theorem~\ref{thmThomIso}}
\label{secThomIso}

In this section, we prove Theorem~\ref{thmThomIso}, which relates the
space of $E_{n}$ ring maps out of $MU$ to the space of $E_{n}$ ring
maps out of $\Sigma^{\infty}_{+}BU$.  We view this theorem as the
$E_{n}$ ring version of the Thom isomorphism: The usual Thom
isomorphism relates maps of spectra out of $MU$ to maps of spectra out
of $\Sigma^{\infty}_{+}BU$.  Indeed, our proof of
Theorem~\ref{thmThomIso} generalizes to any $E_{n}$ ring Thom spectrum; see
Theorem~\ref{thmgenthom} below.  Since the theorem concerns derived
mapping spaces, the proof requires a certain amount of technical work
in the model category of $E_{n}$ ring spectra; however, for a
statement about maps in the homotopy category ($\pi_{0}$ of the derived
mapping space) a simpler argument in the homotopy category
suffices.  We give the homotopy category argument first as an
explanation and guide to the slightly more complicated model category
argument.

We work in the context of an $E_{n}$ ring Thom spectrum, defined as
follows.  Let $G$ denote either $O=\bigcup O(n)$, the infinite
orthogonal group, or $F=\bigcup F(n)$, the grouplike monoid of stable
self-homotopy equivalences of spheres.  (The monoid $F(n)$ is the
space of self-maps of $S^{n}$ that are homotopy equivalences and that
fix $0$ and $\infty$.)  Associated to any ``good'' map $f\colon X\to
BG$ is a Thom spectrum $M=Mf$ \cite[IX.3.2]{lms}; here ``good'' is the
technical condition of \cite[p.~423]{lms}: It is the empty condition
when $G=O$ and when $G=F$ it is satisfied in particular when $f$ is a
Hurewicz fibration, which we can always assume without loss of
generality \cite[pp.~411--412,443]{lms}.  The classifying space $BG$ is
an $E_{\infty}$ space for the linear isometries operad $\oL$
\cite{BV-HE}.  When $X$ is an $\oO\times\oL$-space for some
operad $\oO$ and $f$ is an $\oO\times \oL$-space map, the Thom
spectrum $M$ inherits the structure of an $\oO\times \oL$-spectrum.
In the particular case when $\oO$ is the little $n$-cubes operad
$\oC_{n}$, we call this an $E_{n}$ ring Thom spectrum.

\begin{defn}\label{defEnThom}
Let $\oC_{n}$ denote the Boardman-Vogt little $n$-cubes operad
\cite[2.49]{BV}, \cite[\S4]{MayGILS}.  An \term{$E_{n}$ ring Thom
spectrum} is the Thom spectrum of a $\oC_{n}\times \oL$-space map $X\to
BO$ or a ``good'' $\oC_{n}\times \oL$-space map $X\to BF$, viewed as an
$E_{n}$ ring spectrum.
\end{defn}

An $E_{n}$ ring Thom spectrum is then canonically a $\oC_{n}$-algebra
in EKMM $\bL$-spectra and (by applying $S\sma_{\oL}(-)$) canonically
weakly equivalent to a $\oC_{n}$-algebra in EKMM $S$-modules
(Theorem~\ref{thmOpComp2}), but up to weak equivalence, we can regard
it as a $\oC_{n}$-algebra in any of the modern categories of spectra
(Theorem~\ref{thmOpComp1}).  We fix one of the modern categories of
spectra, denoting it $\Spectra$ (calling its objects ``spectra''), and
write $\Spectra[\oC_{n}]$ for $\oC_{n}$-algebras in this category
(calling its objects ``$E_{n}$ ring spectra'').  As a topological
model category (Theorem~\ref{thmOpModCat}), the category of $E_{n}$
ring spectra has a nice theory of derived mapping spaces, constructed
for example as the mapping space out of a cofibrant replacement and
into a fibrant replacement. We use $\EnRing$ to denote the derived
mapping spaces; the homotopy
category of $E_{n}$ ring spectra is then $\pi_{0}\EnRing$.  The main
theorem of  this section is the following generalization of
Theorem~\ref{thmThomIso}.

\begin{thm}\label{thmgenthom}
Let $M$ be the $E_{n}$ ring Thom spectrum associated to an $E_{n}$
space map $f\colon X\to BG$ for some connected $E_{n}$ space $X$, and
let $R$ be an $E_{n+1}$ ring spectrum.  Then the derived space of maps
of $E_{n}$ ring spectra from $M$ to $R$ is either
empty or weakly equivalent to the derived space of maps of $E_{n}$
ring spectra from $\Sigma^{\infty}_{+}X$ to $R$,
\[
\EnRing(M,R)\simeq
\EnRing(\Sigma^{\infty}_{+}X,R).
\]
\end{thm}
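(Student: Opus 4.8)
The plan is to adapt the classical algebraic proof of the Thom isomorphism to the category of $E_n$ ring spectra, exploiting that the Thom diagonal is a map of $E_n$ ring spectra and that the multiplication on the $E_{n+1}$ ring spectrum $R$ is an $E_n$ ring map. First I would set up the relevant maps. The Thom diagonal $\tau\colon M\to M\sma \Sigma^{\infty}_{+}X$ is a map of $\oC_n$-algebras (in EKMM $\bL$-spectra, hence, after transport via Theorems~\ref{thmOpComp1} and~\ref{thmOpComp2}, in whatever category $\Spectra$ we have fixed). The key structural input is that for the $E_{n+1}$ ring spectrum $R$, the multiplication $\mu\colon R\sma R\to R$ is a map of $E_n$ ring spectra \cite[1.6]{BFVTHH}; this is where we must work with an algebra over $\oC_n\otimes\Ass$ rather than $\oC_n$, as flagged in Section~\ref{secmodel}. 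Given a fixed $E_n$ ring map $\sigma\colon M\to R$ (assuming $\EnRing(M,R)$ is nonempty), I would define
\[
\Phi\colon \EnRing(\Sigma^{\infty}_{+}X,R)\to \EnRing(M,R),
\qquad
\Phi(f)=\mu\circ(\sigma\sma f)\circ\tau,
\]
and a candidate inverse $\Psi(g)$ obtained by composing with a fixed $E_n$ ring map $\bar\sigma\colon M\to R$ representing the ``inverse'' Thom class. More precisely, using that $X$ is connected so that the Thom class is a unit, there is an $E_n$ ring map out of $M$ that on the underlying spectrum is $\chi\cdot(-)$ for the inverse Thom class $\chi$; one gets this by noting $SL_1 R$-valued considerations, i.e., the Thom spectrum of the composite $X\to BG\to BG$ with the inversion on the grouplike $E_n$ space $BG$ is again $M$ up to $E_n$ equivalence, and this produces the twisting map.

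The heart of the argument is the composite identity showing $\Phi$ and $\Psi$ are mutually inverse up to homotopy, and this should come down to the ``cocommutative comonoid'' structure: the Thom diagonal $\tau$ is coassociative and counital, so that the two composites
\[
M\overto{\tau}M\sma\Sigma^{\infty}_{+}X\overto{\tau\sma\id}M\sma\Sigma^{\infty}_{+}X\sma\Sigma^{\infty}_{+}X
\quad\text{and}\quad
M\overto{\tau}M\sma\Sigma^{\infty}_{+}X\overto{\id\sma\Delta}M\sma\Sigma^{\infty}_{+}X\sma\Sigma^{\infty}_{+}X
\]
agree, where $\Delta\colon\Sigma^{\infty}_{+}X\to\Sigma^{\infty}_{+}X\sma\Sigma^{\infty}_{+}X$ is the diagonal. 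Combined with the fact that every $E_n$ ring map $f\colon\Sigma^{\infty}_{+}X\to R$ automatically respects these diagonal/multiplication structures (it lands in $SL_1 R$-type data), the usual bookkeeping identities of the algebraic Thom isomorphism go through. For the homotopy-category statement ($\pi_0$), as the authors note, this is essentially all one needs. For the full derived-mapping-space statement, I would instead work model-categorically: replace $M$ and $\Sigma^{\infty}_{+}X$ by cofibrant $\oC_n$-algebras and $R$ by a fibrant one, realize $\tau$ and the twisting map as honest point-set maps of $\oC_n$-algebras (transported from Lewis's construction in LMS spectra via the Quillen equivalences), and then the formulas for $\Phi$ and $\Psi$ define honest continuous maps of mapping spaces whose composites are pointwise given by precomposition with self-maps of the cofibrant replacements that are homotopic to the identity. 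A pointwise-homotopy-of-maps-of-spectra argument then upgrades to a homotopy of maps of mapping spaces, giving the weak equivalence.

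The main obstacle I anticipate is the compatibility of the $E_n$ structures in the two composites defining $\Phi(f)$: the Thom diagonal is built as a map of $\oC_n\times\oL$-algebras, the multiplication $\mu$ is a map of $\oC_n\otimes\Ass$-algebras (on $R$), and $\sigma\sma f$ needs to be a map of $\oC_n$-algebras with $\sigma$ an $\oC_n\times\oL$-algebra map while $f$ is an $\oC_n$-algebra map for the free $E_n$ ring spectrum on a space. Reconciling these — passing between $\oC_n$, $\oC_n\times\oL$, and $\oC_n\otimes\Ass$ via the change-of-operad Quillen equivalences of Theorem~\ref{thmChangeOp} while keeping track of which maps are strict and which are only homotopy-coherent — is the delicate part, and is presumably why the authors restrict to $R$ being $E_{n+1}$ (so that $\mu$ is genuinely $E_n$) rather than merely $E_n$. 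A secondary technical point is verifying that the twisting $E_n$ ring self-correction of $M$ exists on the point-set level and is compatible with $\tau$; this uses connectedness of $X$ and the $E_n$ group-completion structure on $BG$, and should follow from Lewis's construction \cite[IX.7.1]{lms} applied to the inversion map on $BG$.
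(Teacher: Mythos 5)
Your forward map $\Phi(f)=\mu\circ(\sigma\sma f)\circ\tau$ is exactly the one the paper uses, and you have correctly isolated the two structural inputs (the Thom diagonal is an $E_{n}$ ring map, and the multiplication on $R$ is an $E_{n}$ ring map, realized strictly by passing to a $\oC_{n}\otimes\Ass$-algebra model of $R$). The gap is in how you propose to show $\Phi$ is a weak equivalence. Your inverse $\Psi$ is not actually constructed: ``composing $g\colon M\to R$ with a fixed $E_{n}$ ring map $\bar\sigma\colon M\to R$'' does not produce a map $\Sigma^{\infty}_{+}X\to R$. To make the inverse-Thom-class idea work you would need an $E_{n}$ ring model of the Thom spectrum $M(-f)$ of the inverse classifying map, a generalized Thom diagonal $\Sigma^{\infty}_{+}X=M(0)\to M(f)\sma M(-f)$ as an $E_{n}$ ring map, and coherence homotopies making the two composites homotopic to identities; you have supplied none of this, and the coassociativity of $\tau$ alone does not give the cancellation identities. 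Moreover your final reduction --- that the composites are ``pointwise given by precomposition with self-maps homotopic to the identity,'' so a pointwise homotopy upgrades to a homotopy of maps of mapping spaces --- fails as stated: $\Phi$ is not precomposition with a single map (the variable $f$ sits in the middle of the composite), so $\Psi\circ\Phi$ is not precomposition either, and a homotopy chosen pointwise does not in general assemble into a homotopy of self-maps of the mapping space.

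The paper avoids constructing any inverse. It observes that $R\sma(-)$ is the left adjoint of a free/forgetful Quillen adjunction from $E_{n}$ ring spectra to a point-set category $\Modps$ of $R$-modules in $E_{n}$ ring spectra (Theorem~\ref{thmRModMod}, Corollary~\ref{corFree}), so that $\EnRing(A,R)\iso\Modps(R\sma A,R)$ as mapping spaces for cofibrant $A$; and that the map $R\sma M\to R\sma\Sigma^{\infty}_{+}X$ induced by $\tau$ and $\sigma$ is a map of such modules which is a weak equivalence because, after forgetting all the multiplicative structure, it is the classical homology Thom isomorphism (Proposition~\ref{prophomthom}, Corollary~\ref{corHomThom}). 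Precomposition with a weak equivalence of cofibrant modules, mapping into a fibrant target, is then automatically a weak equivalence of mapping spaces. If you want to repair your argument, the cleanest fix is to adopt this adjunction step: it converts the invertibility question into a statement in the stable category, where the classical Thom isomorphism already supplies the answer and no explicit multiplicative inverse is needed.
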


At its core, the argument is a straightforward algebraic argument,
which gets somewhat obscured by technical details.  To outline and
explain the argument, we first prove the following easier
theorem.

\begin{thm}\label{thmgenthom0}
Let $M$ be the $E_{n}$ ring Thom spectrum associated to an $E_{n}$
space map $f\colon X\to BG$ for some connected $E_{n}$ space $X$, and
let $R$ be an $E_{n+1}$ ring spectrum.  If there exists a map $M\to R$
in the homotopy category of $E_{n}$ ring spectra, then the set of maps
$M\to R$ in the homotopy category of $E_{n}$ ring spectra is in
one-to-one correspondence with the set of maps $\Sigma^{\infty}_{+}X$ to $R$,
in the homotopy category of $E_{n}$ ring spectra,
\[
\pi_{0}\EnRing(M,R)\iso \pi_{0}\EnRing(\Sigma^{\infty}_{+}X,R).
\]
\end{thm}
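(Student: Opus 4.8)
The plan is to exploit the Thom diagonal $\tau\colon M\to M\sma \Sigma^\infty_+X$, which is a map of $E_n$ ring spectra (in fact $E_\infty$, following Lewis's construction and the comparison theorems of Section~\ref{secmodel}), together with the fact that for an $E_{n+1}$ ring spectrum $R$ the multiplication $\mu\colon R\sma R\to R$ is a map of $E_n$ ring spectra \cite[1.6]{BFVTHH}. Fixing a map $\sigma\colon M\to R$ in the homotopy category of $E_n$ ring spectra (one exists by hypothesis), I would define, for each map $g\colon \Sigma^\infty_+X\to R$ of $E_n$ ring spectra, the composite
\[
\Phi(g)\colon M\overto{\tau} M\sma\Sigma^\infty_+X \overto{\sigma\sma g} R\sma R\overto{\mu}R,
\]
which is again an $E_n$ ring map, giving a function $\Phi\colon \pi_0\EnRing(\Sigma^\infty_+X,R)\to \pi_0\EnRing(M,R)$. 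In the other direction, I would use that $\Sigma^\infty_+X$ is itself an $E_n$ ring Thom spectrum — namely the Thom spectrum of the trivial (constant) $E_n$ map $X\to BG$ — and that the trivialization $M\sma\Sigma^\infty_+X\simeq \Sigma^\infty_+X\sma \Sigma^\infty_+X$ coming from the Thom isomorphism for the pullback fibration is an $E_n$ ring equivalence. Concretely, the zero section $X\to *$ and the map $f$ together induce an $E_n$ ring map $\Sigma^\infty_+X\to M$ splitting off the Thom diagonal appropriately; composing with $\sigma$ gives a reference map, and I would define $\Psi(\phi)$ for $\phi\colon M\to R$ by $\Sigma^\infty_+X\to M\sma\Sigma^\infty_+X \overto{\phi\sma\sigma'}R\sma R\to R$, where $\sigma'$ is the reference map.

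The core of the argument is then the algebraic identity that $\Phi$ and $\Psi$ are mutually inverse, which I would verify on the level of the homotopy category using only the associativity, commutativity, and unitality of $\mu$ up to homotopy (all available since $R$ is $E_{n+1}$, hence homotopy commutative) together with the defining property of the Thom diagonal: the composite $M\overto{\tau}M\sma\Sigma^\infty_+X\overto{\mathrm{id}\sma \epsilon}M$ (where $\epsilon\colon \Sigma^\infty_+X\to S$ is induced by $X\to *$) is the identity, and $\tau$ is coassociative and counital with respect to the diagonal on $\Sigma^\infty_+X$. This is exactly the ``usual algebraic argument'' — it is the same bijection that proves the classical Thom isomorphism $\Ho\Spectra(M,R)\cong \Ho\Spectra(\Sigma^\infty_+X,R)$, here upgraded to respect multiplicative structure because every map in sight ($\tau$, $\mu$, the unit, the diagonal) is an $E_n$ ring map. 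The only point where $R$ being $E_{n+1}$ rather than merely $E_n$ is used is in asserting that $\mu$ is an $E_n$ ring map; without that, $\Phi(g)$ would not be defined as an $E_n$ ring map.

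I expect the main obstacle to be bookkeeping rather than anything deep: specifically, making precise that all the structure maps involved ($\tau$, the trivialization of the pullback spherical fibration over $X$, the diagonal, the counit $\epsilon$) are $E_n$ ring maps and that the requisite diagrams commute in $\pi_0\EnRing$. This requires invoking Lewis's functoriality of the Thom spectrum construction for $\oO\times\oL$-space maps \cite[IX.7.1]{lms} and transporting it through the chain of Quillen equivalences of Theorems~\ref{thmOpComp1} and~\ref{thmOpComp2}, and then checking that the diagonal $X\to X\times X$ and the collapse $X\to *$, being $E_n$ maps of spaces, induce $E_n$ ring maps on Thom spectra (the collapse map corresponds to $\epsilon$, using that the Thom spectrum of the trivial map is $\Sigma^\infty_+X$). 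Once these maps are in hand as $E_n$ ring maps, the verification that $\Phi$ and $\Psi$ are inverse bijections on $\pi_0$ is formal. Since the full Theorem~\ref{thmgenthom} promises a weak equivalence of mapping \emph{spaces}, not just a bijection on $\pi_0$, this $\pi_0$ statement serves as the advertised warm-up: the space-level argument will need the same maps but organized in the model category of $E_n$ ring spectra rather than its homotopy category.
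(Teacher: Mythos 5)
Your forward map $\Phi(g)=\mu\circ(\sigma\sma g)\circ\tau$ is exactly the one the paper uses, and the supporting observations (the Thom diagonal is a map of $E_{n}$ ring spectra via Lewis's construction and the comparison theorems, and $\mu$ is an $E_{n}$ ring map because $R$ is $E_{n+1}$) are correct. The gap is in your inverse. The equivalence you assert, $M\sma\Sigma^{\infty}_{+}X\simeq\Sigma^{\infty}_{+}X\sma\Sigma^{\infty}_{+}X$, is false in general: $M\sma\Sigma^{\infty}_{+}X$ is the Thom spectrum of the external sum $f\boxplus 0$ over $X\times X$, and identifying it with $\Sigma^{\infty}_{+}(X\times X)$ would require a stable fiber-homotopy trivialization of that bundle, hence essentially of $f$ itself (concretely, $MU\sma\Sigma^{\infty}_{+}BU$ and $\Sigma^{\infty}_{+}(BU\times BU)$ have very different homotopy groups). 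Likewise there is no $E_{n}$ ring map $\Sigma^{\infty}_{+}X\to M$ ``splitting off the Thom diagonal''; the only Thom equivalence available is the one twisted by the orientation $\sigma\colon M\to R$, which lands in $R\sma\Sigma^{\infty}_{+}X$, not in $\Sigma^{\infty}_{+}X\sma\Sigma^{\infty}_{+}X$. So $\Psi$ is not defined, and the ``formal verification'' that it inverts $\Phi$ cannot be carried out.

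The repair --- and the paper's actual argument --- is to produce no explicit inverse at all. One first notes that $R$ becomes a monoid for the smash product in the homotopy category of $E_{n}$ ring spectra (interchange of the $\oC_{n}$- and $\oC_{1}$-directions inside $\oC_{n+1}$), so there is a free/forgetful adjunction $\pi_{0}\EnRing(E,R)\iso\Modho(R\sma E,R)$; one then shows that the $\sigma$-twisted Thom map $R\sma M\to R\sma\Sigma^{\infty}_{+}X$, adjoint to $(\sigma\sma\id)\circ\tau$, is a map of homotopical $R$-modules in $E_{n}$ ring spectra that is an isomorphism because on underlying spectra it is the classical homology Thom isomorphism. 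This is where the hypothesis that $X$ is connected enters (it guarantees $\sigma$ is an orientation); your write-up never uses it. Your $\Phi$ is precisely the composite of these two bijections, and its inverse is obtained by inverting the Thom equivalence in the module category rather than by a formula. Without some version of this step the argument produces a map but not a bijection.
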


The proof of Theorem~\ref{thmgenthom0} is little more than an
application of the Thom isomorphism theorem and an exercise with
monoids and modules inside the homotopy category of $E_{n}$ ring spectra.
We note that if $A$ and $B$ are $E_{n}$ ring spectra, then $A\sma B$
(point-set smash product in $\Spectra$) is canonically an $E_{n}$ ring
spectrum with action of $\oC_{n}$ induced by using the diagonal map
$\oC_{n}\to \oC_{n}\times \oC_{n}$ and the actions on $A$ and $B$.
Since $\oC_{n}(0)=*$ and each space $\oC_{n}(m)$ is a free
$\Sigma_{m}$-cell complex,
cofibrant $E_{n}$ ring spectra are cofibrant objects in spectra under
$S$ (Theorem~\ref{thmOpCof}).
In particular, the smash product with a cofibrant $E_{n}$ ring
spectrum preserves weak equivalences in $\Spectra$, and it follows
that $\sma$ descends to a symmetric monoidal product on the homotopy
category of $E_{n}$ ring spectra, compatibly with the smash product in
the stable category.

Let $R$ be an $E_{n+1}$ ring spectrum (a $\oC_{n+1}$-algebra in
$\Spectra$).  We use the map of operads $\ell\colon \oC_{n}\to \oC_{n+1}$ that
sends a little $n$-cube $a$ to the little $n+1$-cube $a\times [0,1]$
to regard $R$ as an $E_{n}$ ring spectrum.  We also have a map of
operads $r \colon C_{1}\to \oC_{n+1}$ sending a little $1$-cube $b$
to the little $n+1$-cube $[0,1]^{n}\times b$; using $r$, for any
element $c$ of $\oC_{1}(m)$, we then get a map
\[
r(c)\colon R^{(m)}=R\sma \dotsb \sma R\to R,
\]
(where $R^{(m)}$ denotes the $m$-th smash power of $R$).  Because the
actions induced by $\ell$ and $r$ on $R$ satisfy the interchange law
\cite[1-1]{BFVTHH}, the map $r(c)$ is a map of $E_{n}$ ring spectra.
In particular, working in the homotopy category of $E_{n}$ ring
spectra and taking $c$ to be the element $\mu$ in $\oC_{1}(2)$
representing the standard multiplication, we see that $R$ is a monoid
for the smash product in the homotopy category of $E_{n}$ ring
spectra.  (We will henceforth omit the $r$ and write
$\mu\colon R\sma R\to R$ for this map.) We use the following
terminology for modules.

\begin{defn}\label{defmodule}
Let $R$ be an $E_{n+1}$ ring spectrum.  A \term{homotopical $R$-module
in $E_{n}$ ring spectra} is a left module for $R$ in the homotopy
category of $E_{n}$ ring spectra:  It consists of an $E_{n}$ ring
spectrum $N$ together with an action map
\[
\xi\colon R\sma N\to N
\]
in the homotopy category of $E_{n}$ ring spectra such that the
composite map
\[
S\sma N\to R\sma N\to N
\]
is the canonical isomorphism and the associativity diagram
\[
\xymatrix{%
R\sma R\sma N\ar[r]^-{\id_{R}\sma \xi}\ar[d]_-{\mu\sma \id_{N}}
&R\sma N\ar[d]^-{\xi}\\
R\sma N\ar[r]_-{\xi}&N
}
\]
commutes, where $\mu \colon R\sma R\to R$ is the multiplication
discussed above.  A map of homotopical $R$-modules
in $E_{n}$ ring spectra is a map in the homotopy category of $E_{n}$
ring spectra $N\to N'$ that commutes with the action maps.  We use the symbol $\Modho$
to denote the category of homotopical $R$-modules in $E_{n}$ ring spectra.
\end{defn}

We omit ``in $E_{n}$ ring spectra'' from the terminology for homotopical
$R$-modules when it is clear from context.  We have the usual
free/forgetful adjunction for these modules.

\begin{prop}\label{propfree}
Let $R$ be an $E_{n+1}$ ring spectrum.
The functor $R\sma(-)$ from the homotopy category of $E_{n}$ ring
spectra to the category of homotopical $R$-modules in $E_{n}$ ring
spectra is left adjoint to the forgetful functor: Maps in the homotopy
category of $E_{n}$ ring spectra from an $E_{n}$ ring spectrum $E$ to
a homotopical $R$-module $N$ are in one-to-one correspondence with
maps of homotopical $R$-modules from $R\sma E$ to $N$,
\[
\pi_{0}\EnRing(E,N)\iso \Modho(R\sma E,N).
\]
\end{prop}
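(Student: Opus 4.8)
The plan is to recognize Proposition~\ref{propfree} as an instance of the standard free/forgetful adjunction for modules over a monoid in a monoidal category, applied to the homotopy category of $E_{n}$ ring spectra with its product $\sma$ and the monoid $R$. Every ingredient has been recorded in the discussion preceding Definition~\ref{defmodule}: $\sma$ descends to a symmetric monoidal product on the homotopy category of $E_{n}$ ring spectra with unit $S$ (using that cofibrant $E_{n}$ ring spectra are cofibrant as spectra under $S$, Theorem~\ref{thmOpCof}), and $R$ is a monoid for this product with multiplication $\mu\colon R\sma R\to R$ a map of $E_{n}$ ring spectra. The unit of the monoid is the map $\eta\colon S\to R$; since $\oC_{n}(0)=*$ the sphere $S=\oC_{n}(0)_{+}\sma S$ is the initial $E_{n}$ ring spectrum, so $\eta$ is simply the unique map of $E_{n}$ ring spectra $S\to R$. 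Once these facts are in hand the proposition is formal, and the only thing to check is that the various structure maps are morphisms in the right categories.

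First I would verify that $R\sma(-)$ is a well-defined functor from the homotopy category of $E_{n}$ ring spectra to $\Modho$. For an $E_{n}$ ring spectrum $E$, the map $\mu\sma\id_{E}\colon R\sma R\sma E\to R\sma E$ is a map of $E_{n}$ ring spectra (since $\mu$ is and $\sma$ descends to the homotopy category), and the unit and associativity axioms of Definition~\ref{defmodule} for $R\sma E$ equipped with this action follow from the monoid axioms for $R$ together with the coherence isomorphisms of the symmetric monoidal structure; hence $R\sma E$ is a homotopical $R$-module. A map $E\to E'$ in the homotopy category of $E_{n}$ ring spectra induces $R\sma E\to R\sma E'$, which commutes with the actions, so $R\sma(-)$ is a functor as claimed.

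Next I would exhibit the adjunction directly. The unit at $E$ is the composite $E\iso S\sma E\overto{\eta\sma\id_{E}}R\sma E$, a map of $E_{n}$ ring spectra; the counit at a homotopical $R$-module $N$ is its action map $\xi\colon R\sma N\to N$, which is a map of homotopical $R$-modules precisely because the associativity square of Definition~\ref{defmodule} commutes. The two triangle identities reduce, respectively, to the right unit axiom for the monoid $R$ and the unit axiom for the module $N$, and are checked by a short diagram chase in the homotopy category of $E_{n}$ ring spectra. This produces the asserted bijection
\[
\pi_{0}\EnRing(E,N)\iso\Modho(R\sma E,N)
\]
sending $\phi\colon E\to N$ to $\xi\circ(\id_{R}\sma\phi)$, with inverse sending $\psi\colon R\sma E\to N$ to the composite $E\iso S\sma E\overto{\eta\sma\id_{E}}R\sma E\overto{\psi}N$; naturality in $E$ and $N$ is immediate. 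There is no substantive obstacle here: the argument is the formal theory of modules over a monoid in a symmetric monoidal category, and every nonformal input has already been supplied. The one point requiring care is distinguishing maps of $E_{n}$ ring spectra from maps in the stable category, which the compatibility of the two smash products established above keeps routine.
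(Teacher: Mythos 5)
Your proposal is correct and matches the paper's proof in substance: the paper also defines the correspondence by sending $h\colon E\to N$ to $\xi\circ(\id_{R}\sma h)$ and $k\colon R\sma E\to N$ to $k$ precomposed with $E\iso S\sma E\to R\sma E$, then checks these are mutually inverse. Your unit/counit packaging with the triangle identities is just a slightly more systematic write-up of the same formal free/forgetful adjunction argument.
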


\begin{proof}
The correspondence is the usual one: Given a map $h\colon E\to N$ in
the homotopy category of $E_{n}$ ring spectra, the composite
\[
R\sma E \overto{\id_{R}\sma h} R\sma N\overto{\xi} N
\]
is a map of homotopical $R$-modules
and given a map $k\colon R\sma E\to N$ of homotopical $R$-modules, the
map $E\to N$ is the composite map in the homotopy category of $E_{n}$
ring spectra
\[
E\iso S\sma E\to R\sma E\overto{k}N.
\]
An easy check shows these are inverse correspondences.
\end{proof}

When $M$ is the $E_{n}$ ring Thom spectrum associated to an $E_{n}$
space map $f\colon X\to BG$,  it follows from \cite[IX.7.1]{lms} (see
in particular the top of page~447 in \cite{lms}) that the Thom
diagonal
\[
\tau \colon M\to M\sma X_{+} = M\sma \Sigma^{\infty}_{+}X
\]
lifts to a natural map in the homotopy category of $E_{n}$ ring
spectra.  The following is then the $E_{n}$ ring spectrum version of the
homology Thom isomorphism.  Recall that a map $\sigma$ from a Thom
spectrum $M$
to a ring spectrum $R$ is an \term{orientation} when for every
point $x$ in $X$, the map $S\to R$ obtained by restricting $\sigma$ to
the Thom spectrum of $\{x\}$ represents a unit in the ring $\pi_{0}R$.  When
$X$ is connected, a map of ring spectra $M\to R$ is always an
orientation since the restriction $S\to R$ represents the identity
element in $\pi_{0}R$.

\begin{prop}\label{prophomthom}
Let $M$ be the $E_{n}$ ring Thom spectrum associated to an $E_{n}$
space map $f\colon X\to BG$ and let $R$ be an $E_{n+1}$ ring
spectrum.  If $\sigma \colon M\to R$ is a map in the homotopy category
of $E_{n}$ ring spectra and also an orientation, then the map
\[
M\overto{\tau}
M \sma \Sigma^{\infty}_{+}X
\overto{\sigma \sma \id_{\Sigma^{\infty}_{+}X}}
R\sma \Sigma^{\infty}_{+}X
\]
induces an isomorphism of homotopical $R$-modules in $E_{n}$-ring
spectra
\[
R\sma M\to R\sma \Sigma^{\infty}_{+}X.
\]
\end{prop}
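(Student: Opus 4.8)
The plan is to reduce the statement to the classical homological Thom isomorphism together with some formalities about module categories. First I would make explicit the map $R\sma M\to R\sma\Sigma^{\infty}_{+}X$ that the statement refers to. Since $R\sma\Sigma^{\infty}_{+}X$ is the free homotopical $R$-module on $\Sigma^{\infty}_{+}X$, Proposition~\ref{propfree} (applied with $E=M$ and $N=R\sma\Sigma^{\infty}_{+}X$) identifies the class of the composite $(\sigma\sma\id)\circ\tau$ in $\pi_{0}\EnRing(M,R\sma\Sigma^{\infty}_{+}X)$ with a map of homotopical $R$-modules, and the recipe in the proof of Proposition~\ref{propfree} identifies this module map as
\[
R\sma M\overto{\id\sma\tau}R\sma M\sma\Sigma^{\infty}_{+}X
\overto{\id\sma\sigma\sma\id}R\sma R\sma\Sigma^{\infty}_{+}X
\overto{\mu\sma\id}R\sma\Sigma^{\infty}_{+}X.
\]
Its underlying map in the stable homotopy category is the evident composite of the same shape, in which $\mu$ is the ring multiplication of $R$ (the map $\mu$ built from $r\colon\oC_{1}\to\oC_{n+1}$ underlies the ring multiplication, by construction), $\tau$ is the Thom diagonal of $M$ constructed in \cite[IX.7.1]{lms}, and $\sigma$ is an orientation.

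Second I would observe that this underlying map of spectra is precisely the homological Thom isomorphism with coefficients in the ring spectrum $R$: for a Thom spectrum $M=Mf$ equipped with an orientation $\sigma\colon M\to R$ the composite above is a weak equivalence. This is standard, and I would cite it from \cite{lms}; alternatively it can be proved in the usual way by filtering $X$ by skeleta, observing that the underlying stable spherical fibration is trivial over each cell so that the assertion reduces to the case of a trivial fibration over a disk---where the map is a weak equivalence because $\sigma$ restricts to a unit of $\pi_{0}R$ on each fiber---and passing to colimits. The only point needing care is that the Thom diagonal built into the Thom spectrum construction of \cite[IX.7.1]{lms} agrees with the classical Thom diagonal, which holds by construction.

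Finally I would upgrade ``weak equivalence of underlying spectra'' to ``isomorphism of homotopical $R$-modules''. Since the weak equivalences of $E_{n}$ ring spectra are created in $\Spectra$ (Theorem~\ref{thmOpModCat}), a map of $E_{n}$ ring spectra that is a weak equivalence of underlying spectra is an isomorphism in the homotopy category of $E_{n}$ ring spectra; and when such an isomorphism is a map of homotopical $R$-modules, its inverse is automatically a map of homotopical $R$-modules (conjugate the associativity square by the isomorphism). Hence the map in question is an isomorphism in $\Modho$, as claimed.

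The main obstacle here is not any single step but the bookkeeping: tracking the homotopical $R$-module action on $R\sma\Sigma^{\infty}_{+}X$ and the multiplication $\mu$ through the passage between the homotopy category of $E_{n}$ ring spectra and the stable homotopy category, so as to be certain that the module map produced by Proposition~\ref{propfree} really does underlie the classical Thom isomorphism. Once that identification is pinned down, the remainder is a citation plus the module-category formalities above.
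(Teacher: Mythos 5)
Your proposal is correct and follows essentially the same route as the paper: apply the free/forgetful adjunction (Proposition~\ref{propfree}) to produce the map of homotopical $R$-modules, observe that invertibility is detected on underlying spectra, and reduce to the classical homology Thom isomorphism for the orientation $\sigma$. Your extra bookkeeping (the explicit formula for the adjoint module map and the remark that the inverse of a module map that is an isomorphism is automatically a module map) just spells out what the paper leaves implicit.
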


\begin{proof}
As the composite map $M\to R\sma \Sigma^{\infty}_{+}X$ is a map in the
homotopy category of $E_{n}$ ring spectra, we get an induced map of
homotopical $R$-modules as displayed above by the free/forgetful
adjunction (Proposition~\ref{propfree}).  The question of it being
an isomorphism is a question in the stable category (after forgetting
the $E_{n}$ ring structures and just remembering the homotopical ring
spectrum structure on $R$), and this is just the usual homology
version of the Thom isomorphism, the map $R\sma M\to R\sma
\Sigma^{\infty}_{+}X$, being the geometric cap product with the
orientation $\sigma$.
\end{proof}

Theorem~\ref{thmgenthom0} is now an easy consequence.  Propositions~\ref{propfree}
and~\ref{prophomthom} give us bijections
\begin{multline*}
\pi_{0}\EnRing(M,R)\iso \Modho(R\sma M,R)
\iso \\
\Modho(R\sma \Sigma^{\infty}_{+}X,R)
\iso \EnRing(\Sigma^{\infty}_{+}X,R)
\end{multline*}
under the hypothesis that a map $\sigma \colon M\to R$ exists in the
homotopy category of $E_{n}$ ring spectra.  This completes the proof
of Theorem~\ref{thmgenthom0}.

We can prove Theorem~\ref{thmgenthom} by the same outline, but using
stricter algebraic structures.  Whereas an $E_{n+1}$ ring spectrum is
a monoid for the smash product in the homotopy category of $E_{n}$
ring spectra, it is only an $A_{\infty}$ monoid for the point-set
smash product of $E_{n}$ ring spectra. A monoid for the point-set
smash product of $E_{n}$ ring spectra is precisely an algebra over the
operad $\oC_{n}\otimes \Ass$ \cite[\S1.6]{BFVTHH}, where $\Ass$ is the
operad defining associative monoids.  Theorem~C of
\cite{BFVTHH} shows that $\oC_{n}\otimes \Ass$ is an $E_{n+1}$ operad,
and so given an $E_{n+1}$ ring spectrum $R$, we can find an equivalent
$\oC_{n}\otimes \Ass$-algebra $R'$, which we can regard as a monoid
for the point-set smash product of $E_{n}$ ring spectra.  We then have
the following point-set category of point-set modules.

\begin{defn}
Let $R$ be a monoid for the point-set smash product of $E_{n}$ ring
spectra, or equivalently, an algebra over the operad
$\oC'_{n+1}:=\oC_{n}\otimes \Ass$.  An \term{$R$-module in the category of
$E_{n}$ ring spectra} consists of an $E_{n}$ ring spectrum $N$ together
with an action map
\[
\xi\colon R\sma N\to N
\]
in the point-set category of $E_{n}$-ring spectra $\Spectra[\oC_{n}]$
such that the composite map
\[
S\sma N\to R\sma N\to N
\]
is the canonical isomorphism and the associativity diagram
\[
\xymatrix{%
R\sma R\sma N\ar[r]^-{\id_{R}\sma \xi}\ar[d]_-{\mu\sma \id_{N}}
&R\sma N\ar[d]^-{\xi}\\
R\sma N\ar[r]_-{\xi}&N
}
\]
commutes (in $\Spectra[\oC_{n}]$).  A map of $R$-modules
in $E_{n}$ ring spectra is a map $N\to N'$ in $\Spectra[\oC_{n}]$
that commutes with the action maps.  We denote the category of
$R$-modules in $E_{n}$ ring spectra as $\Modps$.
\end{defn}

The following theorem does the technical work in extending the outline
above for the proof of Theorem~\ref{thmgenthom}.

\begin{thm}\label{thmRModMod}
Let $R$ be a $\oC'_{n+1}$-algebra.  Then the category $\Modps$ of $R$-modules
in $E_{n}$ ring spectra is a topological closed model category with
the fibrations and weak equivalences created in $\Spectra[\oC_{n}]$.
\end{thm}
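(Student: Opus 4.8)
The plan is to realize $\Modps$ as a category of algebras over a monad on $\Spectra$ and transfer the model structure, following the proof of Theorem~\ref{thmOpModCat} in Section~\ref{secpfmodel}. Write $\rCn$ for the free $E_{n}$ ring spectrum functor, $\rCn X=\bigvee_{m\geq 0}\oC_{n}(m)_{+}\sma_{\Sigma_{m}}X^{(m)}$. One checks, just as in Proposition~\ref{propfree} but at the point-set level, that $R\sma(-)$, sending an $E_{n}$ ring spectrum $E$ to $R\sma E$ (with its diagonal $\oC_{n}$-action and $R$-action induced by $\mu\colon R\sma R\to R$, a map of $E_{n}$ ring spectra by the interchange law), is left adjoint to the forgetful functor $\Modps\to\Spectra[\oC_{n}]$. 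Composing with the free $E_{n}$ ring spectrum functor exhibits $\Modps$ as the category of algebras over the monad
\[
\mathbb{T}_{R}(X)=R\sma\rCn X=\bigvee_{m\geq 0}\oC_{n}(m)_{+}\sma_{\Sigma_{m}}(R\sma X^{(m)})
\]
on $\Spectra$, with monad structure built from operad composition in $\oC_{n}$, the $\oC_{n}$-action on $R$ through $\oC_{n}\to\oC'_{n+1}$, and the multiplication of $R$. Since weak equivalences and fibrations in $\Spectra[\oC_{n}]$ are themselves created in $\Spectra$, creating the model structure on $\Modps$ from $\Spectra$ is the same as creating it from $\Spectra[\oC_{n}]$: declare the weak equivalences and fibrations of $R$-modules to be the underlying ones, the cofibrations to have the left lifting property against the acyclic fibrations, and take $\mathbb{T}_{R}(I)$, $\mathbb{T}_{R}(J)$ as generating sets for $I$, $J$ those of $\Spectra$.

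The verification then follows the template of Theorem~\ref{thmOpModCat}. The category $\Modps$ is complete and cocomplete because $\Spectra$ is and $\mathbb{T}_{R}$ preserves reflexive coequalizers and filtered colimits (as do the smash product and $\Sigma_{m}$-orbit functors out of which it is built). The small object argument applies because $\Spectra$ is compactly generated and the forgetful functor preserves filtered colimits, giving the required factorizations as a relative $\mathbb{T}_{R}(I)$-cell complex followed by an acyclic fibration and as a relative $\mathbb{T}_{R}(J)$-cell complex followed by a fibration; the topological enrichment, tensoring, and cotensoring are inherited from $\Spectra$ in the usual way. By the standard lifting (``crush'') lemma used in Section~\ref{secpfmodel}, the only remaining point is that every relative $\mathbb{T}_{R}(J)$-cell complex is a weak equivalence in $\Spectra$.

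That is the crux, and I would settle it by the same filtration computation as in the operadic case. For a single pushout $N\to N'=N\sqcup_{\mathbb{T}_{R}A}\mathbb{T}_{R}B$ along $\mathbb{T}_{R}(j)$ with $j\colon A\to B$ in $J$, the underlying map $N\to N'$ is a sequential colimit of a filtration $N=N_{0}\to N_{1}\to\dotsb$ whose $q$-th subquotient is built from $\oC_{n}(q)_{+}\sma_{\Sigma_{q}}$ of an iterated smash involving the cofiber $B/A$, the previous stage $N_{q-1}$, and one copy of $R$. Since $j$ is an acyclic cofibration between cofibrant spectra, $B/A$ is a cofibrant, weakly contractible spectrum; since each $\oC_{n}(q)$ is a free $\Sigma_{q}$-cell complex and we work in the positive stable model structure or in EKMM $S$-modules, $\oC_{n}(q)_{+}\sma_{\Sigma_{q}}(-)$ takes the non-equivariant weak equivalences at issue to weak equivalences, so that each $N_{q-1}\to N_{q}$, hence $N\to N'$, is a weak equivalence. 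Transfinite composition and retracts then cover all relative $\mathbb{T}_{R}(J)$-cell complexes.

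I expect the one genuinely new feature, compared with Theorem~\ref{thmOpModCat}, to be the extra copy of $R$ in $\mathbb{T}_{R}$: since $R$ is an arbitrary $\oC'_{n+1}$-algebra, not assumed cofibrant, the subquotients above involve smashing $R$ into the contractible pieces, and one must know this does not destroy acyclicity. The same features that make the operadic argument of Section~\ref{secpfmodel} work in the positive stable model structures and in EKMM $S$-modules handle this — namely that smashing with an arbitrary object preserves weak equivalences between cofibrant spectra, and, in the $S$-module case, that $R\sma(-)$ preserves the deformation-retract inclusions generating the acyclic cofibrations — so that the filtration argument goes through essentially verbatim. This is precisely the mechanism by which \cite[VII]{ekmm} put a model structure on the category of modules over an arbitrary $S$-algebra.
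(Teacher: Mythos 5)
Your proposal is correct and follows essentially the same route as the paper: both identify $\Modps$ with the category of algebras over the monad $R\sma\rCn(-)$ on $\Spectra$ and then transfer the model structure via the lifting lemma and the filtration of pushouts. The only difference is packaging — the paper observes that this monad is the one associated to the operad $\oR$ in $\Spectra$ with $\oR(m)=R\sma\oC_{n}(m)_{+}$ and simply invokes Theorem~\ref{thmgenmodel} (the spectrum-level generalization of Theorem~\ref{thmOpModCat}), whereas you unpack that theorem's proof by hand, correctly isolating the extra smash factor of $R$ as the one new point to check.
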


\begin{proof}
The topological model structure is a consequence of Theorem~\ref{thmgenmodel} below, which
generalizes Theorem~\ref{thmOpModCat} to operads in $\Spectra$.  Starting from $\Spectra$, the free functor from $\Spectra$ to $\Modps$
is
\[
R\sma \rCn X = R \sma \bigl(
\bigvee_{m\geq 0} \oC_{n}(m)_{+}\sma_{\Sigma_{m}}X^{(m)}
\bigr).
\]
This is the monad associated to the operad $\oR$ in $\Spectra$ defined
by $\oR(m)=R\sma \oC_{n}(m)_{+}$, with identity
\[
S\iso S\sma \{*\}_{+}\to R\sma \oC_{n}(1)_{+},
\]
equivariance from the equivariance of $\oC_{n}(m)$, and multiplication
\[
R\sma \oC_{n}(m)_{+}\sma
((R\sma \oC_{n}(j_{1})_{+})\sma \dotsb \sma
(R\sma \oC_{n}(j_{m})_{+}))
\to R\sma \oC_{n}(j)
\]
induced by the operadic multiplication on $\oC_{n}$, the
$\oC_{n}$-action on $R$,
\[
\oC_{n}(m)_{+}\sma_{\Sigma_{m}} R^{(m)}\to R
\]
and the multiplication $\mu \colon R\sma R\to R$ from the monoid
structure on $R$.  It follows that $\Modps$ is isomorphic to the
category of $\oR$-algebras, hence admits the topological model structure by Theorem~\ref{thmgenmodel}.
\end{proof}

\begin{cor}\label{corFree}
The free functor $R\sma(-)\colon \Spectra[\oC_{n}]\to \Modps$ and
forgetful functor $\Modps\to\Spectra[\oC_{n}]$ form a Quillen
adjunction.
\end{cor}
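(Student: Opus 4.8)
The plan is to read the corollary off Theorem~\ref{thmRModMod} by way of the standard recognition criterion for Quillen adjunctions, so that essentially no new work is needed. First I would fix the adjunction, which is the point-set analog of Proposition~\ref{propfree}: on an $E_{n}$ ring spectrum $A$, the free $R$-module $R\sma A$ is the point-set smash product in $\Spectra$, equipped with its $\oC_{n}$-action through the diagonal $\oC_{n}\to\oC_{n}\times\oC_{n}$ and its $R$-module structure by $\mu\sma\id_{A}\colon R\sma R\sma A\to R\sma A$, which is a map of $E_{n}$ ring spectra by the interchange law (as in the discussion preceding Definition~\ref{defmodule}); the unit $A\iso S\sma A\to R\sma A$ and the counit $\xi\colon R\sma N\to N$ are the evident maps, and the triangle identities are a routine check. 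Equivalently, under the identification of $\Modps$ with $\oR$-algebras made in the proof of Theorem~\ref{thmRModMod}, $R\sma(-)$ is the left adjoint of the forgetful functor obtained from the free $\oR$-algebra adjunction by restriction along the free $\oC_{n}$-algebra functor $\rCn$, using that $\rCn(-)$ and $R\sma(-)$ preserve colimits.

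With the adjunction in place, the Quillen condition is immediate. By Theorem~\ref{thmRModMod} the fibrations and weak equivalences of $\Modps$ are created in $\Spectra[\oC_{n}]$, so the forgetful functor preserves fibrations and weak equivalences, hence also acyclic fibrations; by the usual characterization of Quillen adjunctions --- a right adjoint that preserves fibrations and acyclic fibrations is a right Quillen functor --- the pair consisting of $R\sma(-)$ and the forgetful functor is a Quillen adjunction, equivalently $R\sma(-)$ preserves cofibrations and acyclic cofibrations. I expect no genuine obstacle: the only points needing attention are the bookkeeping that $R\sma A$ really lands in $\Modps$ and that $R\sma(-)$ is left adjoint to the forgetful functor, both of which are formal once the interchange law is invoked, and all the substance of the statement is carried by Theorem~\ref{thmRModMod}.
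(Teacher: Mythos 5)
Your proposal is correct and is exactly the argument the paper intends: the corollary is stated without proof precisely because, once Theorem~\ref{thmRModMod} establishes that fibrations and weak equivalences in $\Modps$ are created in $\Spectra[\oC_{n}]$, the forgetful functor preserves fibrations and acyclic fibrations and is therefore a right Quillen functor. Your identification of the adjunction (via the interchange law and the factorization of the free functor $R\sma\rCn$ through $\rCn$) matches the setup in the proof of Theorem~\ref{thmRModMod}, so there is nothing to add.
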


As we have already noted, the smash product with a cofibrant $E_{n}$
ring spectrum preserves all weak equivalences in $\Spectra$; it
follows that the derived functor of the free functor $R\sma (-)$ is
the derived smash product with $R$ after forgetting down to the
homotopy category of $E_{n}$ ring spectra or all the way down to the
stable category.   Combining the previous corollary with
Proposition~\ref{prophomthom}, we then get the following corollary.

\begin{cor}\label{corHomThom}
Let $M$ be the $E_{n}$ ring Thom spectrum associated to an $E_{n}$
space map $f\colon X\to BG$ and let $R$ be a $\oC'_{n+1}$-algebra.  If
$\sigma \colon M\to R$ is a map in the homotopy category
of $E_{n}$ ring spectra and also an orientation, then the map
\[
M\overto{\tau}
M \sma \Sigma^{\infty}_{+}X
\overto{\sigma \sma \id_{\Sigma^{\infty}_{+}X}}
R\sma \Sigma^{\infty}_{+}X
\]
in the homotopy category of $E_{n}$ ring spectra induces an
isomorphism in the homotopy category of $R$-modules in $E_{n}$-ring
spectra
\[
R\sma M\to R\sma \Sigma^{\infty}_{+}X.
\]
\end{cor}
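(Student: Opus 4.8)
The plan is to combine the derived free/forgetful adjunction of Corollary~\ref{corFree} with the homotopical Thom isomorphism of Proposition~\ref{prophomthom}, mirroring how that proposition was deduced from Proposition~\ref{propfree}. Since $\sigma$, and hence the composite $M\to R\sma\Sigma^{\infty}_{+}X$ in the statement, is only defined in the homotopy category of $E_{n}$ ring spectra, the whole argument is carried out on homotopy categories. First I would note the elementary point that $\Sigma^{\infty}_{+}X$ is an $E_{n}$ ring spectrum (the suspension spectrum of the $E_{n}$ space $X$), so that $\Sigma^{\infty}_{+}X$ and $M$ lie in the domain of the free functor $R\sma(-)\colon\Spectra[\oC_{n}]\to\Modps$ of Corollary~\ref{corFree}, and that the notations $R\sma\Sigma^{\infty}_{+}X$ and $R\sma M$ in the statement are to be read as the values of the derived functor of $R\sma(-)$, objects of the homotopy category of $R$-modules in $E_{n}$ ring spectra.

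Next I would use Corollary~\ref{corFree}: $R\sma(-)\colon\Spectra[\oC_{n}]\to\Modps$ is left Quillen with right adjoint the forgetful functor, so on homotopy categories its derived functor is left adjoint to the forgetful functor from the homotopy category of $R$-modules in $E_{n}$ ring spectra to the homotopy category of $E_{n}$ ring spectra. The given map $M\to R\sma\Sigma^{\infty}_{+}X$, a morphism in the homotopy category of $E_{n}$ ring spectra whose target underlies an $R$-module, thus has a well-defined adjoint, a map of $R$-modules $R\sma M\to R\sma\Sigma^{\infty}_{+}X$ in the homotopy category of $R$-modules in $E_{n}$ ring spectra; this is the map asserted to be an isomorphism. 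As observed just before Corollary~\ref{corFree}, the derived functor of $R\sma(-)$, after forgetting down to the homotopy category of $E_{n}$ ring spectra, is the derived smash product with $R$ (smashing with a cofibrant $E_{n}$ ring spectrum preserves all weak equivalences in $\Spectra$, Theorem~\ref{thmOpCof}). Combining this with the compatibility of the point-set free/forgetful adjunction of Corollary~\ref{corFree}, its derived version, and the homotopical free/forgetful adjunction of Proposition~\ref{propfree}, one finds that the image of our $R$-module map under the forgetful functor is exactly the map $R\sma M\to R\sma\Sigma^{\infty}_{+}X$ constructed in Proposition~\ref{prophomthom}.

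To finish, I would use that the weak equivalences of $\Modps$ are created in $\Spectra[\oC_{n}]$ (Theorem~\ref{thmRModMod}), so that the forgetful functor from the homotopy category of $R$-modules in $E_{n}$ ring spectra to the homotopy category of $E_{n}$ ring spectra detects isomorphisms. Since the image of our map under this functor is the isomorphism of Proposition~\ref{prophomthom}, the map itself is an isomorphism, which is the conclusion. I expect the main obstacle to be the bookkeeping invoked in the second paragraph: checking that the adjoint map manufactured from Corollary~\ref{corFree} really does forget to the map of Proposition~\ref{prophomthom}. This reduces to the assertion that the relevant forgetful functors intertwine the corresponding (derived) left adjoints --- equivalently, that passing from a point-set $R$-module to the underlying homotopical $R$-module is compatible with the free constructions on each side --- which is a routine but somewhat delicate mates argument, using in particular that for a $\oC'_{n+1}$-algebra the monoid structure on $R$ in the homotopy category of $E_{n}$ ring spectra is the image of its point-set monoid structure.
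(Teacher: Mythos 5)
Your proposal is correct and follows essentially the same route as the paper, which deduces the corollary by combining the Quillen adjunction of Corollary~\ref{corFree} (together with the observation that the derived free functor $R\sma(-)$ agrees with the derived smash product after forgetting) with the homotopical Thom isomorphism of Proposition~\ref{prophomthom}, using that weak equivalences in $\Modps$ are created in the underlying category. You simply spell out the compatibility-of-adjunctions bookkeeping that the paper leaves implicit.
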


Corollary~\ref{corHomThom} is what we need to prove Theorem~\ref{thmgenthom}.

\begin{proof}[Proof of Theorem~\ref{thmgenthom}]%
Let $R$ be an $E_{n+1}$ ring spectrum; we can then find an equivalent
$\oC'_{n+1}$-algebra $R'$ (which is in particular weakly equivalent as
an $E_{n}$ ring spectrum).  Without loss of generality, we can assume
that $R'$ is fibrant as an $\oC'_{n+1}$-algebra and therefore also as an
$E_{n}$ ring spectrum.  We choose cofibrant approximations $M'\to
M$ and $A\to \Sigma^{\infty}_{+}X$.  Suppose there exists a map $\sigma
\colon M\to R\simeq R'$ in the homotopy of $E_{n}$ ring spectra; then
since $X$ is connected, $\sigma$ is an orientation and
Corollaries~\ref{corFree} and~\ref{corHomThom} give us weak
equivalences of mapping
spaces
\[
\Spectra[\oC_{n}](M',R')\iso \Modps(R'\sma M',R')\simeq
\Modps(R'\sma A,R')\iso \Spectra[\oC_{n}](A,R').
\]
The composite is then a weak equivalence
\[
\EnRing (M,R)\simeq \EnRing(\Sigma^{\infty}_{+}X,R).\qedhere
\]
\end{proof}

\section{Proof of Theorem~\ref{thmPT}}
\label{secPT}

In this section we prove the following generalization of
Theorem~\ref{thmPT} from the introduction.  (See
Definition~\ref{defEnThom} for the definition of an $E_{n}$ ring Thom
spectrum.)

\begin{thm}\label{thmgenpt}
Let $M$ be an $E_{n}$ ring Thom spectrum associated to to an $E_{n}$
space map $f\colon X\to BG$ and assume that $X$ is connected.
Let $R$ be an $E_{n}$ ring spectrum, and if $n=1$, assume that
$\pi_{0}R$ is commutative.  Then the space $\EnRing (M,R)$ of $E_{n}$ ring maps
from $M$ to $R$ is weakly equivalent to the homotopy limit of
a tower of principal fibrations of the form
\[
\EnRing(M,R_{q})\to \EnRing(M,R_{q-1})\to \Top_{*}(B^{n}X,K(\pi_{q}R,q+n+1))
\]
for $q\geq 1$.
\end{thm}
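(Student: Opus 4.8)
The plan is to build the tower by mapping $M$ into the Postnikov tower of $R$ taken in the category of $E_{n}$ ring spectra. After replacing $R$ by its connective cover (which only changes $\EnRing(M,R)$ if there were no map $M\to R$ at all, since $M$ is connective, so the statement is unaffected), we invoke the work of Basterra and the second author on the $E_{n}$ Postnikov tower \cite{bmthh,bmbp}: there is a tower $\dots\to R_{q}\to R_{q-1}\to\dots\to R_{0}=H\pi_{0}R$ of $E_{n}$ ring spectra under $R$, with $\operatorname{holim} R_{q}\simeq R$, $\pi_{i}R_{q}=\pi_{i}R$ for $i\le q$ and $\pi_{i}R_{q}=0$ for $i>q$, and each stage a principal fibration in $E_{n}$ ring spectra: $R_{q}\to R_{q-1}$ is the pullback of the path-space fibration over the $E_{n}$ ``square-zero'' extension $H\pi_{0}R\vee \Sigma^{q+1}H\pi_{q}R$ (more precisely, $R_{q}$ is the homotopy pullback of $R_{q-1}\to H\pi_{0}R\vee \Sigma^{q+2}H\pi_{q}R \leftarrow H\pi_{0}R$, where the left map is the $k$-invariant and the right map is the trivial derivation, using the $E_{n}$ topological André--Quillen cohomology computed there). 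The first step is therefore simply to quote this structure in the form needed.

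Next I would apply the contravariant functor $\EnRing(M,-)$ (i.e., the derived mapping space in the topological model category $\Spectra[\oC_{n}]$ of Theorem~\ref{thmOpModCat}) to this tower. Since the model category is topological, derived mapping spaces take homotopy limits in the target to homotopy limits, and take homotopy pullback squares to homotopy pullback squares; hence $\EnRing(M,\operatorname{holim} R_{q})\simeq \operatorname{holim} \EnRing(M,R_{q})$, and each $\EnRing(M,R_{q})\to\EnRing(M,R_{q-1})$ is again a principal fibration, with fiber term $\EnRing(M,-)$ applied to the relevant $E_{n}$ Eilenberg--Mac Lane object. The remaining task is to identify that fiber term as $\Top_{*}(B^{n}X,K(\pi_{q}R,q+n+1))$.

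For that identification I would use the same mechanism as in the discussion preceding Theorem~\ref{thmThomIso}: the space of $E_{n}$ ring maps from $M$ (or from $\Sigma^{\infty}_{+}X$, via the Thom-isomorphism result Theorem~\ref{thmgenthom} applied to the $E_{n+1}$ — in fact $E_{\infty}$ — ring spectrum $H\pi_{0}R$, which is where the base and the square-zero pieces live) into an $E_{n}$ ``trivial square-zero'' ring spectrum $H\pi_{0}R\vee \Sigma^{m}H\pi_{q}R$ splits off, relative to the fixed map $\sigma\colon M\to H\pi_{0}R$, as the space of $E_{n}$ maps $\Sigma^{\infty}_{+}X\to \Sigma^{m}H\pi_{q}R$ with trivial multiplicative structure, i.e. $E_{n}$ maps $X\to (\text{zeroth space of }\Sigma^{m}H\pi_{q}R)$ over the basepoint, which by the adjunction $\EnSpace(X,-)\simeq\Top_{*}(B^{n}X,B^{n}(-))$ \cite[IV.1.8]{mayeinf} is $\Top_{*}(B^{n}X,K(\pi_{q}R,m+n))$; taking $m=q+1$ gives the asserted $K(\pi_{q}R,q+n+1)$. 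The indexing ($q\ge 1$; the stage $R_{0}=H\pi_{0}R$ is where the tower bottoms out, and $\EnRing(M,R_{0})\ne\emptyset$ exactly when $\sigma$ exists, consistent with the ``either empty or…'' phrasing) is then bookkeeping.

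The main obstacle I expect is precisely this last identification of the fiber: extracting from Theorem~\ref{thmgenthom} and the Basterra--Mandell $E_{n}$ topological André--Quillen machinery the clean statement that $\EnRing(M,H\pi_{0}R\vee\Sigma^{m}H\pi_{q}R)$, over the component of $\sigma$, is $\Top_{*}(B^{n}X,K(\pi_{q}R,m+n))$ — in particular checking that the $\pi_{0}R$-module structure on $\pi_{q}R$ and the action of the fundamental group do not introduce a twist, so that one genuinely gets the untwisted Eilenberg--Mac Lane mapping space rather than a section space of a nontrivial fibration. Once that is in hand, everything else is a formal consequence of the topological model structure on $E_{n}$ ring spectra and the behavior of derived mapping spaces under homotopy limits and principal fibrations.
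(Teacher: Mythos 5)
Your proposal is correct and follows the same overall skeleton as the paper: pass to the connective cover (justified by the cell structure of a cofibrant approximation of $M$), quote the Basterra--Mandell Postnikov tower of principal fibrations in $E_{n}$ ring spectra (which the paper sets up in the over-category $\Spectra[\oC_{n}]/H$ with $H=H\pi_{0}R$, using $\Top_{*}$-enriched mapping spaces and the contractibility of $\EnRing(M,H)$ to make your ``relative to $\sigma$'' precise), map $M$ into the pullback squares, and convert the base term $\EnRing(M,H\vee\Sigma^{q+1}H\pi_{q}R)$ to $\EnRing(\Sigma^{\infty}_{+}X,H\vee\Sigma^{q+1}H\pi_{q}R)$ via Theorem~\ref{thmgenthom}. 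The one genuine divergence is the step you correctly flag as the main obstacle: identifying this last space with $\Top_{*}(B^{n}X,K(\pi_{q}R,q+n+1))$. You propose multiplicative infinite loop space theory --- $E_{n}$ ring maps out of $\Sigma^{\infty}_{+}X$ as $E_{n}$ space maps into the units, the observation that for a split square-zero extension the multiplicative $E_{n}$ structure on the fiber $\Omega^{\infty}\Sigma^{q+1}H\pi_{q}R$ is the additive one, and the $n$-fold delooping adjunction. The paper instead stays in spectra: it reduces from maps over $H$ to maps over $S$, recognizes $S\vee\Sigma^{q+1}H\pi_{q}R$ as the square-zero object $KZ(\Sigma^{q+1}H\pi_{q}R)$, and applies the Quillen adjunctions of \cite[7.1,7.2]{bmthh} to get $\Stable(\Sigma^{-n}I^{\rd}B^{n}\Sigma^{\infty}_{+}X,\Sigma^{q+1}H\pi_{q}R)$, then uses that the iterated bar construction commutes with $\Sigma^{\infty}_{+}$ to land on $\Top_{*}(B^{n}X,K(\pi_{q}R,q+n+1))$. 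The paper's route has the advantage of resolving your worry about twisting for free (everything is done in the split over-categories, so no local coefficients arise) and of being the form needed later for the comparison in Lemma~\ref{lemobviousone}; your route is closer to the heuristic discussion in the introduction and would need the $E_{n}$ analogue of \cite[IV.1.8]{mayeinf} plus the square-zero-units argument made precise. Also note a small slip: the $k$-invariant target in the homotopy pullback is $H\vee\Sigma^{q+1}H\pi_{q}R$, not $H\vee\Sigma^{q+2}H\pi_{q}R$ as in your parenthetical.
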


We fix $X$, $M$, and $R$ as in the theorem, and we assume without loss
of generality that $R$ is fibrant.  Choose a cofibrant approximation
$M'\to M$ in the category of $E_{n}$ ring spectra.  Let $c\colon \bar
R\to R$ be a connective cover, i.e., $\bar R$ is connective
($\pi_{q}\bar R=0$ for $q<0$) and $c$ induces an isomorphism on
non-negative homotopy groups.  (The connective cover can be
constructed by applying the small objects argument as if to construct
a cofibrant approximation but only using the non-negative dimensional
cells; alternatively it can be constructed using multiplicative
infinite loop space theory applied to the zeroth space of $R$
\cite[\S4]{May-mult}).  We assume without loss of generality that
$\bar R$ is fibrant and also cofibrant in the category
$\Spectra[\oC_{n}]$ of $E_{n}$ ring spectra.  Then the derived mapping
spaces $\EnRing(M,R)$ and $\EnRing(M,\bar R)$ may be constructed as
the point set mapping spaces $\Spectra[\oC_{n}](M',R)$ and
$\Spectra[\oC_{n}](M',\bar R)$, respectively.  The following
observation reduces to the connective case.

\begin{prop}
The map $c\colon \bar R\to R$ induces a weak equivalence
$\EnRing(M,\bar R)\to \EnRing(M,R)$.
\end{prop}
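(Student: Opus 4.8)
The plan is to exploit that $M$ is connective --- it is the Thom spectrum of a map to $BG$ --- so that $\EnRing(M,-)$ depends only on the non-negative homotopy groups of its argument, which is exactly the range on which $c$ is an isomorphism by the construction of $\bar R$. The first step is to replace the given cofibrant approximation $M'\to M$ by one adapted to this connectivity: since $M$ is connective and the initial $\oC_{n}$-algebra $S=\oC_{n}(0)_{+}\sma S$ is connective, the usual CW/Postnikov cell-trading argument, carried out in the topological model category $\Spectra[\oC_{n}]$ of Theorem~\ref{thmOpModCat}, produces a cofibrant approximation $M'\to M$ that is a cell $\oC_{n}$-algebra relative to $S$ all of whose cells $\rCn C\to \rCn C'$ are free on generating cofibrations $C\to C'$ of $\Spectra$ of non-negative dimension. (Concretely, in the positive model structures on symmetric or orthogonal spectra one uses cells $\rCn(F_{m}S^{k-1}_{+})\to \rCn(F_{m}D^{k}_{+})$ with $k\geq m$, and in EKMM $S$-modules cells $S^{d-1}\to D^{d}$ with $d\geq 0$.)

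Given such an $M'$, I would filter it by its cell tower $S=M'_{0}\to M'_{1}\to \dotsb$, where each $M'_{q}$ is a pushout of $\oC_{n}$-algebras of $M'_{q-1}$ along a coproduct of cells $\rCn C_{\alpha}\to \rCn C'_{\alpha}$. Applying $\Spectra[\oC_{n}](-,Y)$ for $Y$ fibrant turns each such pushout into a homotopy pullback of fibrations and the whole tower into a homotopy limit computing $\EnRing(M,Y)$; by the free/forgetful adjunction the fiber terms are $\Spectra[\oC_{n}](\rCn C,Y)\iso \Spectra(C,Y)$, ordinary spectrum mapping spaces. For a cell $C$ of non-negative dimension and $Y$ a fibrant $\Omega$-spectrum, the homotopy groups $\pi_{j}\Spectra(C,Y)$ for $j\geq 0$ are all among the groups $\pi_{t}Y$ for $t\geq 0$, so $c\colon \bar R\to R$ induces a weak equivalence $\Spectra(C,\bar R)\to \Spectra(C,R)$ for every such $C$. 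Since homotopy pullbacks and transfinite homotopy limits of weak equivalences are again weak equivalences, an induction up the tower --- with base case $\Spectra[\oC_{n}](S,-)=\{*\}$ --- shows that $\Spectra[\oC_{n}](M',\bar R)\to \Spectra[\oC_{n}](M',R)$ is a weak equivalence, which is the claim.

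The main obstacle is the first step: building the cofibrant approximation of $M$ as an $E_{n}$ ring spectrum using only non-negative cells. This is the standard fact that connective objects admit connective cell approximations, but it must be run in the category of $\oC_{n}$-algebras, using that $S=\oC_{n}(0)_{+}\sma S$ is the connective initial object, that $\rCn$ carries connective spectra to connective spectra (wedges, smash powers and $\oC_{n}(m)_{+}\sma_{\Sigma_{m}}(-)$ all preserve connectivity), and that the generating cofibrations may be taken with non-negative cells when approximating a connective object. One can sidestep this construction by instead taking $M'$ to be a cofibrant form of the two-sided cotriple bar resolution $|B_{\ssdot}(\rCn,\rCn,M)|$: then $\EnRing(M,Y)\simeq \mathrm{Tot}_{p}\,\Spectra(\rCn^{p}M,Y)$, each $\rCn^{p}M$ is connective, and the same observation --- that $c$ is a weak equivalence after mapping any connective spectrum into it --- finishes the argument levelwise. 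Everything else is formal.
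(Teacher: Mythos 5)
Your strategy is the same as the paper's: the paper's entire proof is the observation that the cofibrant approximation $M'\to M$ can be built from $S$ using only cells $\rCn S^{q}_{c}\to \rCn CS^{q}$ (EKMM) or $\rCn F_{m}S^{m+q}_{+}\to \rCn F_{m}D^{m+q+1}_{+}$ (diagram spectra) with $q\geq 0$, leaving the induction up the cell tower implicit. You supply that induction, and you correctly identify that the real content is the existence of such a cell approximation (which the paper also asserts without proof). Your bar-resolution alternative is a legitimate variant.

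However, one step in your induction is false as stated in the symmetric/orthogonal cases. You claim that for a generating cell $C$ of non-negative dimension, $\pi_{j}\Spectra(C,Y)$ for $j\geq 0$ involves only $\pi_{t}Y$ for $t\geq 0$, so that $c$ induces weak equivalences on all three corners of the cospan $\Spectra[\oC_{n}](M'_{q-1},Y)\to\Spectra(C,Y)\from\Spectra(C',Y)$. But in the positive stable model structure the cells have $m\geq 1$, and $\Spectra(F_{m}D^{k}_{+},Y)\simeq Y(m)\simeq\Omega^{\infty}\Sigma^{m}Y$, whose $\pi_{0}$ is $\pi_{-m}Y$; similarly $\Spectra(F_{m}S^{k-1}_{+},Y)$ is the unbased mapping space $Y(m)^{S^{k-1}}$, which detects $\pi_{-m}Y$. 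So neither corner is preserved by $c\colon\bar R\to R$ when $R$ is non-connective, and "homotopy pullbacks of weak equivalences" does not apply. The repair is the standard one: compare instead the two fibrations $\Spectra[\oC_{n}](M'_{q},Y)\to\Spectra[\oC_{n}](M'_{q-1},Y)$ for $Y=\bar R$ and $Y=R$. Their fibers over corresponding points are fibers of $\Spectra(F_{m}D^{k}_{+},Y)\to\Spectra(F_{m}S^{k-1}_{+},Y)$, hence are either empty or weakly equivalent to $\Spectra(F_{m}S^{k},Y)\simeq\Omega^{k}Y(m)$, with $\pi_{j}\iso\pi_{j+k-m}Y$, and emptiness is governed by an obstruction in $\pi_{k-1-m}Y$. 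With the paper's indexing $k=m+q+1$, $q\geq 0$, all of these are non-negative homotopy groups of $Y$, on which $c$ is an isomorphism, and the comparison of fiber sequences closes the induction. Note also that your condition $k\geq m$ is off by one: for $k=m$ the obstruction lies in $\pi_{-1}Y$, so you must insist on $k\geq m+1$ as the paper does.
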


\begin{proof}
This can be deduced from the results in Section~4 of \cite{May-mult}.
A more modern approach is to observe that the cofibrant approximation
$M'\to M$ can be built starting
from $S$ entirely using ``positive dimensional cells'', i.e., cells of
the form
\[
\rCn S^{q}_{c}\to \rCn CS^{q}
\qquad\text{or}\qquad
\rCn F_{m}S^{m+q}_{+}\to \rCn F_{m}D^{m+q+1}_{+}
\]
(the former when $\Spectra$ is EKMM $S$-modules, the latter when
$\Spectra$ is symmetric or orthogonal spectra) for $q\geq 0$, where
$\rCn$ denotes the free $\oC_{n}$ algebra functor.
\end{proof}

Let $H=H\pi_{0}R$ be a fibrant model of the Eilenberg-Mac Lane ring
spectrum; the hypothesis of Theorem~\ref{thmgenpt} allows us to choose
the model $H$ with the structure of an $E_{\infty}$ ring spectrum (or
commutative $S$-algebra).  The following is an easy induction argument
on the cell structure of a cofibrant approximation of the domain.

\begin{prop}\label{prophtydisc}
For any connective $E_{n}$ ring spectrum $E$, the mapping space
$\EnRing(E,H)$ is homotopy discrete with $\pi_{0}$ the set of ring
maps from $\pi_{0}E$ to $\pi_{0}H$.
\end{prop}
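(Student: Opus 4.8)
The plan is to reduce the statement to a computation of mapping spaces out of free $\oC_n$-algebras on spheres, by induction up the cell structure of a cofibrant approximation $E' \to E$ built entirely from positive-dimensional cells (as in the previous proposition). Since $H = H\pi_0 R$ is an $E_\infty$ ring spectrum with homotopy concentrated in degree $0$, the key local input is that for a free $\oC_n$-algebra $\rCn Z$ on a spectrum $Z$ (taken cofibrant in $\Spectra$), the mapping space $\EnRing(\rCn Z, H) \simeq \Spectra(Z, H)$ by the free/forgetful adjunction, and this is homotopy discrete because $H$ has homotopy only in degree $0$: $\pi_t \Spectra(Z, H) = H^{-t}(Z; \pi_0 R)$, and for the relevant $Z = S^q$ or $F_m S^{m+q}_+$ with $q \geq 0$ this reduced cohomology vanishes except possibly in degree $0$, where it gives $\mathrm{Hom}(\pi_q E, \pi_0 R)$-type data assembled correctly.

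First I would set up the cofibrant approximation $E' \to E$ as a sequential colimit $E' = \colim E'_k$ where $E'_0 = \rCn S = S$ and each $E'_k$ is obtained from $E'_{k-1}$ by attaching a positive-dimensional cell along a pushout
\[
\xymatrix{
\rCn S^{q}_{c} \ar[r]\ar[d] & E'_{k-1}\ar[d]\\
\rCn CS^{q}\ar[r] & E'_{k}
}
\]
(or the orthogonal/symmetric spectra analogue), with $q \geq 0$. Applying $\EnRing(-,H)$ turns this into a homotopy pullback of mapping spaces, with $\EnRing(\rCn CS^q, H) \simeq \Spectra(CS^q, H) \simeq *$ contractible and $\EnRing(\rCn S^q_c, H) \simeq \Spectra(S^q_c, H)$ homotopy discrete. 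Hence $\EnRing(E'_k, H) \to \EnRing(E'_{k-1}, H)$ is the pullback along a map to a homotopy discrete space of the based-path fibration over that space; an inductive argument shows each $\EnRing(E'_k, H)$ is homotopy discrete, and passing to the homotopy limit over $k$ (using that each stage is a fibration of homotopy discrete spaces, so $\lim^1$ issues are controlled) gives that $\EnRing(E, H)$ is homotopy discrete.

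Once homotopy discreteness is established, the identification of $\pi_0$ is formal: $\pi_0 \EnRing(E, H)$ is the set of maps $E \to H$ in the homotopy category of $E_n$ ring spectra, and since $H$ is an Eilenberg--Mac Lane ring spectrum with $\pi_* H$ concentrated in degree $0$, any such map is determined by and can be prescribed by its effect on $\pi_0$; the $E_n$ ring structure on $H$ refines the evident ring structure, and an $E_n$ ring map out of a connective $E$ is the same as a ring map $\pi_0 E \to \pi_0 H = \pi_0 R$ (there are no higher obstructions because the relevant obstruction and difference groups are cohomology of the cells with coefficients in $\pi_{>0} H = 0$). The main obstacle will be the bookkeeping in the inductive step: one must check that attaching a $q$-cell with $q \geq 0$ genuinely cannot create higher homotopy in the mapping space --- this rests precisely on the fact that the ``cells'' are positive-dimensional, so that $\widetilde{H}^t(S^q_c; \pi_0 R) = 0$ for $t \neq 0$ (respectively $\widetilde{H}^t(F_m S^{m+q}_+; \pi_0 R)$ for symmetric/orthogonal spectra), together with verifying that the colimit over the cell filtration is compatible with taking homotopy groups of mapping spaces. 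Everything else is a routine consequence of the adjunctions and model structures established in Section~\ref{secmodel}.
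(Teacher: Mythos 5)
Your strategy --- inducting up the cell filtration of a cofibrant approximation and using the free/forgetful adjunction to identify the mapping space out of each attached cell --- is exactly what the paper intends (it offers nothing beyond the phrase ``an easy induction argument on the cell structure of a cofibrant approximation of the domain''), and the mechanics of your induction (homotopy pullback squares, towers of fibrations, $\lim^1$ control) are sound.

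There is, however, one concrete slip. You import from the preceding proposition the claim that the cofibrant approximation is built entirely from positive-dimensional cells $\rCn S^{q}_{c}\to \rCn CS^{q}$ with $q\geq 0$. That claim is particular to the Thom spectrum $M$ there, whose $\pi_{0}$ is generated by the unit; for a general connective $E$ --- say $\pi_{0}E=\bZ[x]$ --- one must also attach zero-dimensional cells, i.e.\ cells with $q=-1$, to hit the generators of $\pi_{0}E$. For such a cell the base of your fibration is $\Spectra(S^{-1}_{c},H)\simeq K(\pi_{0}H,1)$, which is \emph{not} homotopy discrete, so your stated local input fails as written. The induction survives nonetheless: what you actually need is only that the loop space of the base is homotopy discrete, equivalently that $\pi_{t}\Spectra(S^{q}_{c},H)\iso\pi_{t+q}H$ vanishes for $t\geq 2$, which holds for all $q\geq -1$; the homotopy fiber of a map from a homotopy discrete space into a space whose homotopy is concentrated in degrees $\leq 1$ is again homotopy discrete. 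These $q=-1$ cells are moreover essential to the second half of the statement: each contributes a free choice in $\pi_{0}H$ of where a generator of $\pi_{0}E$ goes (a torsor under $\pi_{1}$ of the base), the $q=0$ cells impose the relations, and the $q\geq 1$ cells contribute nothing, which is precisely why $\pi_{0}\EnRing(E,H)$ is the full set of ring maps $\pi_{0}E\to\pi_{0}H$ rather than a single point. Restate the cell structure with $q\geq -1$ and weaken the local discreteness claim accordingly, and your proof is complete.
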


The hypothesis that $X$ is connected implies that $\pi_{0}M$ is either
$\bZ$ or $\bZ/2$, and so it follows that $\EnRing(M,H)$ is either
empty or weakly contractible.  In the case when $\EnRing(M,H)$ is
empty, so is $\EnRing(M,R)$ and Theorem~\ref{thmgenpt} holds for
trivial reasons.  We henceforth restrict to the case when
$\EnRing(M,H)$ is weakly contractible and fix a map $M'\to H$.
Likewise we fix a map $\bar R\to H$ representing the identity map on
$\pi_{0}\bar R=\pi_{0}H$.  Writing $\EnRing_{/H}$ for the derived
mapping space in the category of $E_{n}$ ring spectra lying over $H$,
we then have the following result.

\begin{prop}
The forgetful map
\[
\EnRing_{/H}(M',\bar R)\to \EnRing(M',\bar R)\simeq
\EnRing(M,R)
\]
is a weak equivalence.
\end{prop}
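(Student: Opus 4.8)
The forgetful map
\[
\EnRing_{/H}(M',\bar R)\to \EnRing(M',\bar R)\simeq \EnRing(M,R)
\]
is a weak equivalence. Here $\bar R\to H$ is the chosen map inducing the identity on $\pi_0$, $M'\to H$ is the chosen (and essentially unique up to contractible choice) map, and $\bar R$ is connective, fibrant, and cofibrant as an $E_n$ ring spectrum; $M'$ is cofibrant.

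The plan is to recognize the forgetful map as the fiber of the map induced by postcomposition with $\bar R\to H$. Concretely, since $H=H\pi_0\bar R$ is fibrant, postcomposition gives a fibration (or at least a map we may replace by a fibration) of derived mapping spaces
\[
p\colon \EnRing(M',\bar R)\to \EnRing(M',H),
\]
and $\EnRing_{/H}(M',\bar R)$ is, by definition, the fiber of $p$ over the chosen point of $\EnRing(M',H)$. (In the point-set model, $\EnRing_{/H}(M',\bar R)=\Spectra[\oC_n]_{/H}(M',\bar R)$ is literally the fiber of the point-set fibration $\Spectra[\oC_n](M',\bar R)\to \Spectra[\oC_n](M',H)$ over the fixed map $M'\to H$, since $\bar R$ and $H$ are fibrant.) So the claim will follow once we know that $\EnRing(M',H)=\EnRing(M,H)$ is weakly contractible and that $p$ is a fibration whose base is connected; the inclusion of the fiber over a point of a contractible base into the total space of a fibration is then a weak equivalence.

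First I would invoke Proposition~\ref{prophtydisc}: since $M$ (hence the connective $E_n$ ring spectrum modeling it, which $M'$ does) is connective, $\EnRing(M,H)$ is homotopy discrete with $\pi_0$ the set of ring maps $\pi_0 M\to \pi_0 H$. As observed just after that proposition, connectedness of $X$ forces $\pi_0 M\in\{\bZ,\bZ/2\}$, and we are in the case (explicitly assumed in the paragraph preceding the statement) where this mapping set is nonempty — it is then a single point, because a ring map out of $\bZ$ or $\bZ/2$ to any ring is unique when it exists. Hence $\EnRing(M,H)$ is weakly contractible, and in particular connected. Next I would set up $p$ as an honest fibration: working in the point-set topological model category $\Spectra[\oC_n]$, with $M'$ cofibrant and $\bar R\to H$ a fibration (it is the connective-cover–type surjection onto the fibrant Eilenberg–Mac Lane spectrum, or can be arranged to be a fibration by factoring), SM7 for the topological model structure (Theorem~\ref{thmOpModCat}) gives that
\[
\Spectra[\oC_n](M',\bar R)\to \Spectra[\oC_n](M',H)
\]
is a Hurewicz fibration of spaces; and this is a model for $p$ on derived mapping spaces since all objects in sight are fibrant and $M'$ is cofibrant. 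The fiber over the chosen map $M'\to H$ is by definition $\EnRing_{/H}(M',\bar R)$, and the forgetful map to the total space is precisely the map in the statement. Since the base $\EnRing(M,H)$ is weakly contractible, the inclusion of any fiber is a weak equivalence; combined with the identification $\EnRing(M',\bar R)\simeq\EnRing(M,R)$ from the previous proposition, this completes the proof.

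The main obstacle is the bookkeeping that lets us replace the derived mapping spaces by point-set mapping spaces and apply the topological model structure: one needs $\bar R\to H$ to be (or to be replaced by, compatibly with the fixed choices) a fibration of $E_n$ ring spectra, and one needs $M'$ to be a cofibrant $E_n$ ring spectrum that is also connective so that Proposition~\ref{prophtydisc} applies to $\EnRing(M',H)$. Both are already arranged in the setup paragraphs — $\bar R$ is connective, fibrant, and cofibrant, and a connective cover map can be taken to be a fibration, while $M'\to M$ is a cofibrant approximation with $M$ connective — so the only real content beyond this is the elementary fact that the mapping set $\mathrm{Ring}(\pi_0 M,\pi_0 H)$ is a single point, which is immediate from $\pi_0 M$ being $\bZ$ or $\bZ/2$. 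Everything else is the standard ``fiber over a contractible base'' argument for a fibration of spaces.
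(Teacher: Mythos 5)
Your argument is correct and is exactly the one the paper leaves implicit: the over-category mapping space is the fiber of the fibration $\EnRing(M',\bar R)\to\EnRing(M',H)$ over the chosen point, and the base is weakly contractible by Proposition~\ref{prophtydisc} together with the standing assumption that $\EnRing(M,H)$ is nonempty. The paper states the proposition without proof precisely because the contractibility of $\EnRing(M,H)$ was established in the preceding paragraph for this purpose, so your write-up simply supplies the omitted bookkeeping.
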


Thus, to study $\EnRing(M,R)$, we can study the space of maps in the
category $\Spectra[\oC_{n}]/H$ of $E_{n}$ ring spectra lying over the
$E_{\infty}$ ring spectrum $H$.  This is precisely the situation
studied in \cite[\S4]{bmbp}.  In particular, Theorem~4.2 of \cite{bmbp}
constructs a Postnikov tower for $\bar R$ as a tower of principal
fibrations in $\Spectra[\oC_{n}]/H$.  Specifically, we start with
$\bar R\to R_{0}\to H$ a cofibration followed by an acyclic fibration.
Then for each $q>0$, we can inductively construct $R_{q}$ as (a
cofibrant approximation of) the
homotopy pullback of maps
\[
\xymatrix{%
&H\ar[d]\\
R_{q-1}\ar[r]_-{k^{n}_{q}}&(H\vee \Sigma^{q+1} H\pi_{q}R)_{f},
}
\]
where $(H\vee \Sigma^{q+1} H\pi_{q}R)_{f}$ denotes a fibrant
approximation of the ``square zero'' $E_{\infty}$ ring spectrum $H\vee
\Sigma^{q+1} H\pi_{q}R$ (meaning that the multiplication on the summand
\[
\Sigma^{q+1} H\pi_{q}R \sma \Sigma^{q+1} H\pi_{q}R
\]
is the trivial map).  Using the path space construction of the
homotopy pullback, we can arrange that the map $R_{q}\to R_{q-1}$ is a
fibration.   The map $k^{n}_{q}$ is chosen so that there is
an induced map $\bar
R\to R_{q}$ that is an isomorphism on homotopy groups in dimension $q$
and below; for formal reasons, the underlying map of spectra
\[
R_{q-1}\to H\vee \Sigma^{q+1} H\pi_{q}R \to \Sigma^{q+1} H\pi_{q}R
\]
is then the $q$-th $k$-invariant $k_{q}$ in the Postnikov tower for
$\bar R$.  Looking at the space of maps in $\Spectra[\oC_{n}]/H$ from
$M'$ into these squares and this tower, we get the following result.

\begin{thm}
The space $\EnRing(M,R)$ of $E_{n}$ ring maps
from $M$ to $R$ is weakly equivalent to the homotopy limit of
a tower of principal fibrations of the form
\[
\EnRing(M,R_{q})\to \EnRing(M,R_{q-1})\to
\EnRing(M,H\vee \Sigma^{q+1}H\pi_{q}R)
\]
for $q\geq 1$.
\end{thm}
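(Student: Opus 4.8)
The plan is to apply the mapping space functor $\EnRing_{/H}(M',-)$ to the Postnikov tower of $\bar R$ in $\Spectra[\oC_{n}]/H$ constructed above, and to verify that it takes the tower of principal fibrations in $\Spectra[\oC_{n}]/H$ to a tower of principal fibrations of spaces. First I would recall that, by the previous propositions, $\EnRing(M,R)\simeq\EnRing_{/H}(M',\bar R)$, and that $\bar R$ is the homotopy limit of the tower $\{R_{q}\}$, with each $R_{q}\to R_{q-1}$ a fibration (arranged by the path-space model of the homotopy pullback) fitting into a strict pullback square with corner maps to $H$ and to $(H\vee\Sigma^{q+1}H\pi_{q}R)_{f}$. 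Since $M'$ is cofibrant in $\Spectra[\oC_{n}]$ (hence in $\Spectra[\oC_{n}]/H$ once we fix the map $M'\to H$) and the topological model structure of Theorem~\ref{thmOpModCat} gives a well-behaved mapping-space functor, $\Spectra[\oC_{n}]/H(M',-)$ carries fibrations to fibrations of spaces and preserves (homotopy) limits; applying it to the tower $\{R_{q}\}$ therefore yields a tower of fibrations of spaces whose homotopy limit is $\EnRing_{/H}(M',\bar R)$.

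Next I would identify the fibers. Applying $\Spectra[\oC_{n}]/H(M',-)$ to the pullback square defining $R_{q}$ gives a pullback square of spaces; since the right-hand vertical map $H\to(H\vee\Sigma^{q+1}H\pi_{q}R)_{f}$ becomes, after mapping in $M'$, the inclusion of the basepoint component (the map $M'\to H$ followed by the section $H\to H\vee\Sigma^{q+1}H\pi_{q}R$), the square exhibits $\EnRing_{/H}(M',R_{q})$ as the homotopy fiber of
\[
\EnRing_{/H}(M',R_{q-1})\to\EnRing_{/H}(M',(H\vee\Sigma^{q+1}H\pi_{q}R)_{f})
\]
over the chosen basepoint, so the tower is principal. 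It remains to compute the base of the $q$-th fibration. Here I would use that $H\vee\Sigma^{q+1}H\pi_{q}R$ is the square-zero $E_{\infty}$ extension, so $\EnRing_{/H}(M',H\vee\Sigma^{q+1}H\pi_{q}R)$ is the space of $E_{n}$ derivations of $M$ into the $M$-module $\Sigma^{q+1}H\pi_{q}R$; by the identification of topological André--Quillen cohomology for $E_{n}$ ring spectra with cohomology of an $n$-fold delooping (the computation used in \cite{bmthh,bmbp} together with the Thom-spectrum reduction of Theorem~\ref{thmgenthom}/\ref{thmgenthom0}, which replaces $M$ by $\Sigma^{\infty}_{+}X$ in this derivation computation), this space is weakly equivalent to $\Top_{*}(B^{n}X,K(\pi_{q}R,q+n+1))$. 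Combining, the $q$-th stage of the tower has the asserted form, and passing to the homotopy limit gives the theorem.

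The main obstacle I expect is the last identification: rewriting the derived space of $E_{n}$ maps (over $H$) into the square-zero extension $H\vee\Sigma^{q+1}H\pi_{q}R$ as the mapping space $\Top_{*}(B^{n}X,K(\pi_{q}R,q+n+1))$. This requires (i) passing from $M$ to $\Sigma^{\infty}_{+}X$ via the Thom-isomorphism argument of Section~\ref{secThomIso} at the level of these specific mapping spaces (so that the relevant topological André--Quillen cohomology is that of $\Sigma^{\infty}_{+}X$, whose $E_{n}$-cotangent complex is built from the $n$-fold delooping $B^{n}X$), and (ii) invoking the Basterra--Mandell computation of the relevant cohomology groups, with the degree shift $q+n+1$ coming from the $E_{n}$-suspension/stabilization bookkeeping. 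The rest of the argument is the formal interaction of the topological model structure with the mapping-space functor, which is routine given Theorems~\ref{thmOpModCat} and~\ref{thmOpComp1}.
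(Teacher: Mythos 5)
Your proposal is correct and follows essentially the same route as the paper: reduce to $\EnRing_{/H}(M',\bar R)$ via the preceding propositions and then map the cofibrant $M'$ into the Postnikov tower of principal fibrations in $\Spectra[\oC_{n}]/H$ furnished by \cite[4.2]{bmbp}, using the pullback squares to exhibit each stage as a principal fibration. Note that the statement at hand only requires the base of the $q$-th fibration to be $\EnRing(M,H\vee \Sigma^{q+1}H\pi_{q}R)$; the further identification with $\Top_{*}(B^{n}X,K(\pi_{q}R,q+n+1))$ that you flag as the main obstacle is the content of the subsequent Theorem~\ref{thmbmmain} (combined with Theorem~\ref{thmgenthom}), not of this theorem.
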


Since $H\vee H\pi_{q}R$ is an $E_{\infty}$ ring spectrum, and we have
a canonical map in the homotopy category of $E_{n}$ ring spectra $M\to
H\to H\vee H\pi_{q}R$, we can apply Theorem~\ref{thmgenthom} to obtain
a weak equivalence
\[
\EnRing(M,H\vee \Sigma^{q+1}H\pi_{q}R) \simeq
\EnRing(\Sigma^{\infty}_{+}X,H\vee \Sigma^{q+1}H\pi_{q}R).
\]
Using primarily Theorem~1.3 of \cite{bmthh}, we prove the following
theorem, which then completes the proof of Theorem~\ref{thmgenpt}.

\begin{thm}\label{thmbmmain}
$\EnRing(\Sigma^{\infty}_{+}X,H\vee \Sigma^{q+1}H\pi_{q}R)
\simeq \Top_{*}(B^{n}X,K(\pi_{q}R,q+n+1))$.
\end{thm}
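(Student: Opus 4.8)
The plan is to reduce the statement to a computation of the derived mapping space of $E_n$ ring maps from $\Sigma^{\infty}_{+}X$ into a square-zero extension, and then identify that space via the $n$-fold delooping of $SL_1$. First I would invoke the identification (as in \cite[IV.1.8]{mayeinf} and the discussion following Theorem~\ref{thmThomIso}) of $\EnRing(\Sigma^{\infty}_{+}X, R')$ with the space of $E_n$ maps $X \to SL_1 R'$, i.e. with $\Top_*(B^n X, B^n SL_1 R')$, valid for any $E_n$ ring spectrum $R'$ and any connected $E_n$ space $X$. Applying this with $R' = H\vee \Sigma^{q+1}H\pi_q R$ reduces the problem to understanding $B^n SL_1$ of this square-zero extension. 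The point of the square-zero hypothesis is that $SL_1(H\vee \Sigma^{q+1}H\pi_q R)$ splits: its homotopy groups are $\pi_0 = (\pi_0 H)^\times$ (the units, which act as the base point component since we are looking at $SL_1$) in degree $0$, and $\pi_{q+1}(H\vee \Sigma^{q+1}H\pi_q R) = \pi_q R$ in degree $q+1$, with all Postnikov $k$-invariants trivial because the multiplication on the $\Sigma^{q+1}H\pi_q R$ summand is null. Thus the base point component of $SL_1(H\vee \Sigma^{q+1}H\pi_q R)$ is, as an $E_n$ (indeed $E_\infty$) space, equivalent to $K(\pi_q R, q+1)$, and so $B^n SL_1(H\vee \Sigma^{q+1}H\pi_q R) \simeq K(\pi_q R, q+n+1)$.

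The key steps, in order, are: (1) quote the $\EnRing(\Sigma^{\infty}_{+}X,-)\simeq\Top_*(B^n X, B^n SL_1(-))$ identification; (2) compute the homotopy groups of $SL_1(H\vee \Sigma^{q+1}H\pi_q R)$ and observe that the relevant component is an Eilenberg--Mac Lane space $K(\pi_q R, q+1)$ of the appropriate type --- this is where Theorem~1.3 of \cite{bmthh} enters, as it is precisely the statement that the square-zero extension has this split multiplicative structure on $SL_1$, so there are no nontrivial obstructions or $k$-invariants to worry about; (3) apply $B^n$ to get $K(\pi_q R, q+n+1)$; (4) assemble the pieces to get the claimed equivalence
\[
\EnRing(\Sigma^{\infty}_{+}X,H\vee \Sigma^{q+1}H\pi_{q}R)
\simeq \Top_*(B^n X, K(\pi_q R, q+n+1)).
\]
One small point to handle carefully is basepoints: the mapping space $\EnRing$ is unpointed, but the identification with $\Top_*(B^n X,-)$ forces the target to be based at the canonical orientation, which is the one factoring through $H$; this matches the component of $SL_1$ we selected, so the bookkeeping is consistent.

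The main obstacle I anticipate is step (2): verifying that the multiplicative structure really does make the relevant component of $SL_1(H\vee\Sigma^{q+1}H\pi_q R)$ equivalent, as an $E_n$-space (not merely as a space), to an ordinary Eilenberg--Mac Lane space, so that $B^n$ of it is genuinely $K(\pi_q R, q+n+1)$ with the right infinite-loop or $n$-fold-loop structure. Since $H\vee\Sigma^{q+1}H\pi_q R$ is in fact $E_\infty$, $SL_1$ of it is an infinite loop space, and the square-zero condition kills the only potentially nontrivial $k$-invariant (the one pairing $\pi_0^+$ with $\pi_{q}^+$, which would come from the module action of $\pi_0 H$ on $\pi_q R$ --- one should check this acts through the ring map, which it does since $\pi_q R$ is a $\pi_0 R$-module and $\pi_0 H = \pi_0 R$). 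Modulo this check, which is exactly the content drawn from \cite[1.3]{bmthh}, the rest is formal manipulation of delooping functors and the already-established identification of $E_n$ ring map spaces out of suspension spectra.
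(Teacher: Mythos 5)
Your argument is correct, but it takes a genuinely different route from the paper's. You work on the space level: quote the identification $\EnRing(\Sigma^{\infty}_{+}X,R')\simeq\Top_{*}(B^{n}X,B^{n}SL_{1}R')$ and then observe that $SL_{1}(H\vee\Sigma^{q+1}H\pi_{q}R)$ has a single nonzero homotopy group, $\pi_{q+1}=\pi_{q}R$, so that it and its $n$-fold delooping are Eilenberg--Mac\,Lane spaces. Note that this observation already does all the work: a connected space with one nonzero homotopy group is a $K(A,m)$ regardless of any multiplicative structure, so your worries about $k$-invariants and the square-zero condition in step (2) are moot, and \cite[1.3]{bmthh} plays no role in your argument (it is not the statement you attribute to it; it identifies $E_{n}$ topological Andr\'e--Quillen cohomology with the $n$-fold bar construction). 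The paper instead stays entirely on the spectrum side: it passes to the over-categories $\EnRing_{/H}$ and then $\EnRing_{/S}$, writes $S\vee\Sigma^{q+1}H\pi_{q}R$ as the square-zero extension $KZ(\Sigma^{q+1}H\pi_{q}R)$, and applies the Quillen adjunctions of \cite[7.1,7.2]{bmthh} to get $\Stable(\Sigma^{-n}I^{\rd}(B^{n}\Sigma^{\infty}_{+}X),\Sigma^{q+1}H\pi_{q}R)$, finishing by the identification $B^{n}\Sigma^{\infty}_{+}X\simeq\Sigma^{\infty}_{+}B^{n}X$ and the splitting of the augmentation. This is where \cite[1.3]{bmthh} actually enters. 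What your route buys is brevity; what it costs is that the whole weight falls on the multiplicative $(\Sigma^{\infty}_{+},SL_{1})$ adjunction for $E_{n}$ ring spectra and its compatibility with derived mapping spaces, which the paper only asserts informally via \cite[IV.1.8]{mayeinf} (and uses only in the introduction and in Section~\ref{secE4}, where the target is $E_{\infty}$) and does not establish within its model-categorical framework --- presumably the reason the authors chose the spectrum-level argument. If you make that adjunction precise, your proof goes through.
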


\begin{proof}
We note that $\Sigma^{\infty}_{+}X$ comes with a canonical map to $S$
induced by the map
$X\to *$.  From Proposition~\ref{prophtydisc}, we see that
$\EnRing(\Sigma^{\infty}_{+}X,H)$ is weakly contractible and hence
that the map
\[
\EnRing_{/H}(\Sigma^{\infty}_{+}X,H\vee \Sigma^{q+1}H\pi_{q}R)\to
\EnRing(\Sigma^{\infty}_{+}X,H\vee \Sigma^{q+1}H\pi_{q}R)
\]
is a weak equivalence.  Pulling back along the map $S\to H$ is the
right adjoint in a Quillen adjunction between the category of $E_{n}$
ring spectra lying over $S$ and the category of $E_{n}$ ring spectra
lying over $H$, and so we get a weak equivalence
\[
\EnRing_{/S}(\Sigma^{\infty}_{+}X,S\vee \Sigma^{q+1}H\pi_{q}R)\to
\EnRing_{/H}(\Sigma^{\infty}_{+}X,H\vee \Sigma^{q+1}H\pi_{q}R)
\]
where $S\vee \Sigma^{q+1}H\pi_{q}R$ has the square zero multiplication.
In the notation of \cite[\S7]{bmthh},
\[
S\vee \Sigma^{q+1}H\pi_{q}R=KZ (\Sigma^{q+1}H\pi_{q}R).
\]
where $KZ$ is the square zero multiplication functor from spectra to
$E_{n}$ ring spectra lying over $S$.
The Quillen adjunctions of \cite[7.1,7.2]{bmthh}, then give us a weak
equivalence
\begin{multline*}
\EnRing_{/S}(\Sigma^{\infty}_{+}X,S\vee \Sigma^{q+1}H\pi_{q}R)
\simeq \Stable(\Sigma^{-n}I^{\rd}(B^{n}\Sigma^{\infty}_{+}X),
\Sigma^{q+1}H\pi_{q}R)\\
\simeq
\Stable(I^{\rd}(B^{n}\Sigma^{\infty}_{+}X),\Sigma^{q+n+1}H\pi_{q}R)
\end{multline*}
where $\Stable$ denotes the derived space of maps of spectra,
$I^{\rd}$ is the homotopy fiber of the augmentation, and $B^{n}$ is
the iterated bar construction for $E_{n}$ ring spectra lying over $S$
constructed in
\cite{bmthh}.  This bar construction commutes with the unbased
suspension spectrum functor, so we get a weak equivalence
\[
\Stable(I^{\rd}(B^{n}\Sigma^{\infty}_{+}X),\Sigma^{q+n+1}H\pi_{q}R)
\simeq
\Stable(I^{\rd}\Sigma^{\infty}_{+}B^{n}X,\Sigma^{q+n+1}H\pi_{q}R).
\]
Since the augmentation $\Sigma^{\infty}_{+}B^{n}X\to S$ is split by
the unit $S\to \Sigma^{\infty}_{+}B^{n}X$, we can identify the
homotopy fiber of the augmentation as the cofiber of the unit.  This
gives us a weak equivalence
\[
\Stable(I^{\rd}\Sigma^{\infty}_{+}B^{n}X,\Sigma^{q+n+1}H\pi_{q}R)
\simeq
\Stable(\Sigma^{\infty}B^{n}X,\Sigma^{q+n+1}H\pi_{q}R)
\]
where $B^{n}X$ has its usual basepoint.  The usual suspension spectrum,
zeroth space (i.e., underlying infinite loop space) adjunction then
gives the weak equivalence
\begin{multline*}
\Stable(\Sigma^{\infty}B^{n}X,\Sigma^{q+n+1}H\pi_{q}R)
\simeq
\Top_{*}(B^{n}X,\Omega^{\infty}(\Sigma^{q+n+1}H\pi_{q}R))\\
\simeq
\Top_{*}(B^{n}X,K(\pi_{q}R,q+n+1))
\end{multline*}
completing the proof.
\end{proof}

\section{Proof of Theorems~\ref{thmMSO} and~\ref{thmMU}}
\label{secMU}

The entirety of this section is devoted to the proof of
Theorems~\ref{thmMSO} and~\ref{thmMU}.  We fix an even $E_{2}$ ring
spectrum $R$ and carry
over the notation $\bar R$, $R_{q}$, and $H$ from the last section:
$\bar R\to R$ is a connective cover, and
\[
\bar R\to \dotsb \to R_{q}\to R_{q-1}\to \dotsb \to R_{0}\simeq H
\]
is a Postnikov tower in the category of $E_{2}$ ring spectra.
In the case of $MSO$ we assume that $\pi_{0}R$ contains $1/2$.

Our proof is an inductive argument up the Postnikov tower.  Both
arguments are essentially the same, so we do the case of $MU$ in
detail, with the changes necessary for $MSO$ in Remark~\ref{remMSO}
below. We write
$\HoRing(MU,R_{q})$ for the set of maps of ring spectra (in the stable
category) from $MU$ to $R_{q}$.  The inductive hypothesis (indexed on
integers $q\geq 0$) is the following:
\begin{enumerate}
\item The forgetful map $\pi_{0}\EnRing[2](MU,R_{q})\to
\HoRing(MU,R_{q})$ is surjective.
\item For $q>0$, the map from $\pi_{0}\EnRing[2](MU,R_{q})$ to
the fiber product of the maps
\[
\pi_{0}\EnRing[2](MU,R_{q-1})\to \HoRing(MU,R_{q-1})\from
\HoRing(MU,R_{q})
\]
is surjective.
\item $\pi_{1}(\EnRing[2](MU,R_{q}),f)$ is trivial for all
basepoints $f$.
\end{enumerate}
Under the hypothesis that $R$ is even, we have
\[
\HoRing(MU,R)\iso \lim \HoRing(MU,R_{q}).
\]
Inductive hypothesis~(iii) implies
\[
\pi_{0}\EnRing[2](MU,R)\iso \lim \pi_{0}\EnRing[2](MU,R_{q})
\]
and inductive hypotheses~(i) and~(ii) then imply that the map
$\pi_{0}\EnRing[2](MU,R)\to \HoRing(MU,R)$ is surjective, which will
complete the proof of Theorem~\ref{thmMU}.

In the base case $q=0$, $R_{0}\simeq H$ and both
$\pi_{0}\EnRing[2](MU,H)$ and $\HoRing(MU,H)$ consist of a single
point.  Thus, inductive hypothesis~(i) holds.  Inductive
hypothesis~(ii) is empty in this case, and inductive hypothesis~(iii)
holds since $\EnRing[2](MU,H)$ is weakly contractible.

For $q\geq 1$, it suffices to consider the case when $q$ is even since
the map $R_{q}\to R_{q-1}$ is a weak equivalence when $q$ is odd.  We
look at the fiber sequence
\begin{equation}\label{eqfibseq}
\to
\EnRing[2](MU,R_{q})\to \EnRing[2](MU,R_{q-1})\to
\EnRing[2](MU,H\vee \Sigma^{q+1}H\pi_{q}R)
\end{equation}
and use the identification of Theorems~\ref{thmgenpt}
and~\ref{thmbmmain} of $\EnRing[2](MU,H\vee \Sigma^{q+1}H\pi_{q}R)$
with
\[
\Top_{*}(B^{2}BU,K(\pi_{q}R,q+3))
\simeq
\Top_{*}(BSU,K(\pi_{q}R,q+3)).
\]
This then gives us a computation of the homotopy groups of the base space:
\begin{multline}\label{eqpie2}
\pi_{m}\EnRing[2](MU,H\vee \Sigma^{q+1}H\pi_{q}R)\iso\\
\pi_{m}\Top_{*}(BSU,K(\pi_{q}R,q+3))=\tilde H^{q+3-m}(BSU;\pi_{q}R).
\end{multline}
The integral cohomology of $BSU$ is well-known: It is a polynomial
algebra on the Chern classes $c_{2}$, $c_{3}$, etc.  We see that the
base space of the fibration~\eqref{eqfibseq} is therefore
connected with non-zero homotopy groups only in odd degrees.  The
inductive hypothesis~(iii) for $q-1$ that
$\pi_{1}(\EnRing[2](MU,R_{q-1}),f)$ is trivial for all basepoints $f$ now
implies the inductive hypothesis~(iii) for $q$ that
$\pi_{1}(\EnRing[2](MU,R_{q}),g)$ is trivial for all basepoints $g$.

For the inductive steps~(i) and~(ii), we need to relate our fiber
sequence~\eqref{eqfibseq} and the map $\HoRing(MU,R_{q})\to
\HoRing(MU,R_{q-1})$.  For this, we use the well-known fact that
a ring spectrum map $f\colon MU\to R_{q}$ is completely determined by
its restriction to $MU(1)$ as a map in the stable category
(q.v.~\cite[II.4.6]{AdamsBlueBook}, \cite[10.10]{Dold}), where
$MU(1)\simeq \Sigma^{-2}\bC P^{\infty}$ is the Thom spectrum of
$BU(1)$.  Writing $\Stable(MU(1),R_{q})$ for the derived space of maps
in $\Spectra$ from $MU(1)$ to $R_{q}$, let $\Stable(MU(1),R_{q})_{u}$
denote the subspace of components that map to the component of the unit
map $S\to R$ in $\Stable(S,R_{q})$  (via the inclusion of $S$
in $MU(1)$).  Then the map
\[
\HoRing(MU,R_{q})\to \pi_{0}\Stable(MU(1),R_{q})_{u}
\]
is a natural bijection and we can think of $\Stable(MU(1),R_{q})_{u}$ as an
enrichment of $\HoRing(MU,R_{q})$ into $\pi_{0}$ of a space.  Indeed, the map
\[
\HoRing(MU,R_{q})\to \HoRing(MU,R_{q-1})
\]
is compatible with the
fiber sequence
\[
\to
\Stable(MU(1),R_{q})_{u}\to
\Stable(MU(1),R_{q-1})_{u}\to
\Stable(MU(1),H\vee \Sigma^{q+1}H\pi_{q}R)_{u}
\]
induced by the principal fibration constructing $R_{q}$.  We then have
a map of fiber sequences
\begin{equation}\label{eqcompfib}
\begin{gathered}
\xymatrix@-1pc{%
\ar[r]&
\EnRing[2](MU,R_{q})\ar[r]\ar[d]&
\EnRing[2](MU,R_{q-1})\ar[r]\ar[d]&
\EnRing[2](MU,H\vee \Sigma^{q+1}H\pi_{q}R)\ar[d]
\\
\ar[r]&
\Stable(MU(1),R_{q})_{u}\ar[r]&
\Stable(MU(1),R_{q-1})_{u}\ar[r]&
\Stable(MU(1),H\vee \Sigma^{q+1}H\pi_{q}R)_{u}.
}
\end{gathered}
\end{equation}

We can easily calculate the homotopy groups of $\Stable(MU(1),H\vee
\Sigma^{q+1}H\pi_{q}R)_{u}$ using the Thom isomorphism:
\begin{multline}\label{eqpihrs}
\pi_{m}\Stable(MU(1),H\vee \Sigma^{q+1}H\pi_{q}R)_{u}
\iso \\
\pi_{m}\Stable(\Sigma^{\infty}BU(1)_{+},H\vee \Sigma^{q+1}H\pi_{q}R)_{u}
\iso \tilde H^{q+1-m}(BU(1);\pi_{q}R)
\end{multline}
The next task is to understand the comparison map relating the
homotopy groups in~\eqref{eqpie2} and the homotopy groups
in~\eqref{eqpihrs}.  We prove that it is the obvious one.

\begin{lem}\label{lemobviousone}
The induced map on the homotopy groups of the base spaces
in~\eqref{eqcompfib} is the map $\tilde H^{q+3-m}(BSU;\pi_{q}R) \to \tilde
H^{q+1-m}(BU(1);\pi_{q}R)$ induced by the map
\[
\Sigma^{2} BU(1)\to \Sigma^{2}BU \to B^{2}BU\simeq BSU
\]
where $BU(1)\to BU$ is the inclusion and $\Sigma^{2}BU\to B^{2}BU$ is
the adjoint of the canonical delooping equivalence $BU\to
\Omega^{2}B^{2}BU$.
\end{lem}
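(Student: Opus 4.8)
Let me think about what the lemma is asking. We have a map of fiber sequences in diagram \eqref{eqcompfib}. The base spaces are $\EnRing[2](MU, H\vee\Sigma^{q+1}H\pi_q R)$ on top and $\Stable(MU(1), H\vee\Sigma^{q+1}H\pi_q R)_u$ on the bottom. We've identified their homotopy groups via Thom isomorphisms: top is $\tilde H^{q+3-m}(BSU;\pi_q R)$ (via $B^2BU\simeq BSU$), bottom is $\tilde H^{q+1-m}(BU(1);\pi_q R)$. The vertical map is "restrict a ring map $MU\to -$ to $MU(1)$". We want to show the induced map on these cohomology groups is the obvious pullback along $\Sigma^2 BU(1)\to\Sigma^2 BU\to B^2 BU\simeq BSU$.

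So the proof should trace through all the identifications. Let me think about the structure. The vertical map on base spaces comes from the map $\Stable(MU(1), -)_u$ receiving a map from $\EnRing(MU, -)$ by composing $MU(1)\hookrightarrow MU\to R$. Actually wait — more precisely, in the top row we have $E_n$ ring maps $MU\to$ (square zero thing), and we've used Theorem~\ref{thmgenthom}/\ref{thmbmmain} to rewrite this as $\Top_*(B^2 BU, K(\pi_q R, q+3))$. The bottom row we've used the spectrum-level Thom isomorphism to rewrite $\Stable(MU(1), -)_u$ as $\Stable(\Sigma^\infty BU(1)_+, -)_u$, i.e. $\Top_*(BU(1), \Omega^\infty \Sigma^{q+1}H\pi_q R)$ essentially, giving $\tilde H^{q+1-m}(BU(1);\pi_q R)$.

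Key point: both identifications are instances of a Thom isomorphism, and they should be compatible. On the top, Theorem~\ref{thmgenthom} says $\EnRing(MU, R)\simeq \EnRing(\Sigma^\infty_+ BU, R)$ — this is the $E_n$-ring Thom isomorphism, and its proof goes through the *homology* Thom isomorphism $R\wedge MU \cong R\wedge\Sigma^\infty_+ BU$ (Proposition~\ref{prophomthom}), which is the cap product with the orientation. On the bottom, the plain Thom isomorphism for $MU(1)$ is *also* the cap product with the (restricted) orientation. So there should be a commuting square

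\[
\xymatrix{
\EnRing(MU, R)\ar[r]\ar[d] & \EnRing(\Sigma^\infty_+ BU, R)\ar[d]\\
\Stable(MU(1), R)_u\ar[r] & \Stable(\Sigma^\infty_+ BU(1), R)_u
}
\]

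where the right vertical map is restriction along $BU(1)\to BU$ (followed by forgetting the $E_n$ structure and the fact that for square-zero targets $E_n$ ring maps out of $\Sigma^\infty_+ BU$ are "the same as" based maps $B^n BU\to$ something — that's what Theorem~\ref{thmbmmain} unpacks). The strategy, then, is:

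\emph{Step 1.} Reduce the comparison to the right-hand vertical map in the square above, i.e. show that the square commutes. This is because both horizontal maps are constructed by the \emph{same} cap-product/Thom-diagonal recipe (Proposition~\ref{prophomthom} on top, its classical precursor on the bottom), and the Thom diagonal $MU\to MU\wedge\Sigma^\infty_+ BU$ restricts, under $MU(1)\to MU$, to the Thom diagonal $MU(1)\to MU(1)\wedge\Sigma^\infty_+ BU(1)$ followed by $1\wedge(\Sigma^\infty_+ BU(1)\to\Sigma^\infty_+ BU)$. So naturality of the Thom diagonal in the map of bundles $BU(1)\to BU$ gives the square.

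\emph{Step 2.} Now identify the right vertical map on homotopy groups. For the square-zero target $H\vee\Sigma^{q+1}H\pi_q R$, Theorem~\ref{thmbmmain} identifies $\EnRing(\Sigma^\infty_+ Y, H\vee\Sigma^{q+1}H\pi_q R)$ with $\Top_*(B^n Y, K(\pi_q R, q+n+1))$ \emph{naturally in the $E_n$-space $Y$} — this naturality is built into the chain of adjunctions in the proof of Theorem~\ref{thmbmmain} (the bar construction $B^n$, the $KZ$/indecomposables adjunctions of \cite{bmthh}, and the suspension-spectrum/zeroth-space adjunction are all functorial). In particular, the map induced by $BU(1)\to BU$ becomes the map $\Top_*(B^2 BU, K(\pi_q R, q+3))\to \Top_*(B^2 BU(1), K(\pi_q R, q+3))$ induced by $B^2 BU(1)\to B^2 BU$. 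On homotopy groups this is $\tilde H^{q+3-m}(B^2 BU;\pi_q R)\to \tilde H^{q+3-m}(B^2 BU(1);\pi_q R)$.

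\emph{Step 3.} Finally, reconcile this with the statement of the lemma. On the bottom row, we did \emph{not} deloop: we used the plain Thom isomorphism, landing in $\tilde H^{q+1-m}(BU(1);\pi_q R)$ rather than $\tilde H^{q+3-m}(B^2 BU(1);\pi_q R)$. So I need the fact that for the \emph{square-zero} target, $E_n$ ring maps out of $\Sigma^\infty_+ BU(1)$ and plain spectrum maps out of $\Sigma^\infty_+ BU(1)_+$ differ exactly by the two-fold delooping: $\tilde H^{q+1-m}(BU(1);\pi_q R)\cong \tilde H^{q+3-m}(B^2 BU(1);\pi_q R)$ via suspension. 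This is the "$B^n$" that appears in Theorem~\ref{thmbmmain}, applied with $Y=BU(1)$; concretely the relevant map $\Sigma^2 BU(1)\to B^2 BU(1)$ is (the adjoint of) the canonical delooping equivalence for the connected infinite loop space $BU(1)=\bC P^\infty$. Under this identification and that of Step 2, the composite becomes pullback along $\Sigma^2 BU(1)\to \Sigma^2 BU\to B^2 BU\simeq BSU$, which is exactly the claim. Here I would also need to observe that $B^2 BU\simeq BSU$ — this comes from $BU\simeq\Omega^2 BSU$ (the space-level reflection of Bott periodicity together with $U/SU\simeq S^1$, so that the connective delooping of $BU$ is $BSU$, not $BU$), which accounts for the $BU\to BSU$ discrepancy in the statement.

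\emph{Main obstacle.} The real work — and the place where one has to be careful rather than just invoke naturality — is Step 2: verifying that the identification in Theorem~\ref{thmbmmain} is genuinely natural in the $E_n$-space $Y$, through \emph{every} stage (the iterated bar construction $B^n$ for augmented $E_n$ ring spectra over $S$ from \cite{bmthh}, the $KZ$/topological-André–Quillen adjunctions \cite[7.1,7.2]{bmthh}, and the connective-cover and square-zero manipulations), and that this naturality is compatible with the \emph{Thom-diagonal} naturality used in Step 1. Said differently, the two descriptions of the "Thom isomorphism" — the geometric cap product (Proposition~\ref{prophomthom}) and the algebraic TAQ computation (Theorem~\ref{thmbmmain}) — must be shown to agree as maps, not merely to have the same source and target. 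Once that compatibility is in hand, the identification of the map with pullback along $\Sigma^2 BU(1)\to\Sigma^2 BU\to BSU$ is forced, and the remaining bookkeeping with degrees ($q+1$ vs $q+3$, the $\Sigma^2$, and $B^2 BU\simeq BSU$) is routine.
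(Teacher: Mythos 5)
Your Step 1 matches the paper's first reduction (naturality of the Thom isomorphism with respect to $MU(1)\to MU$ and $BU(1)\to BU$), and you correctly locate the crux in the compatibility of the geometric Thom isomorphism with the bar-construction/TAQ identification of Theorem~\ref{thmbmmain}. But Steps 2 and 3 contain a genuine gap: you factor the remaining comparison as ``restrict along $B^{2}BU(1)\to B^{2}BU$, then deloop $BU(1)$,'' and this requires the inclusion $BU(1)\to BU$ to be a map of $E_{2}$-spaces, so that $B^{2}$ can be applied to it and so that $E_{2}$ ring maps out of $\Sigma^{\infty}_{+}BU$ can be restricted to $E_{2}$ ring maps out of $\Sigma^{\infty}_{+}BU(1)$. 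It is not: the multiplicative structure on $BU$ relevant here is Whitney sum (that is what makes $MU$ an $E_{\infty}$ ring Thom spectrum), while $BU(1)=\bC P^{\infty}$ carries only the tensor-product structure, and since $(L_{1}-1)+(L_{2}-1)\neq L_{1}L_{2}-1$ the inclusion is not even an $H$-map between these. Worse, if one ran your factorization with the tensor-product structure, then $B^{2}BU(1)\simeq K(\bZ,4)$, whose rational cohomology is generated by powers of the fundamental class; since all products vanish in $\tilde H^{*}(\Sigma^{2}BU(1))$, the composite would be rationally zero above degree $4$, contradicting Proposition~\ref{propbott2}, which requires $c_{m+1}\mapsto(-1)^{m}x^{m}$ for every $m$.

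The repair is to perform the two operations in the other order, which is what the paper does: first compare $\EnRing[2](\Sigma^{\infty}_{+}BU,-)$ with $\Stable(\Sigma^{\infty}_{+}BU,-)_{u}$ (the forgetful map, still at the level of $BU$, not $BU(1)$), and only afterwards restrict along $BU(1)\to BU$ at the level of plain spectrum maps, where no multiplicative compatibility is needed. The forgetful map is identified by a commuting square relating the equivalence $\EnRing[2]_{/S}(A,S\vee N)\simeq\Stable(\Sigma^{-2}I^{\rd}B^{2}A,N)$ of \cite[7.4]{bmthh} to the equivalence $\Stable_{/S}(A,S\vee N)\simeq\Stable(I^{\rd}A,N)$ via the natural map $\Sigma^{2}I^{\rd}A\to I^{\rd}B^{2}A$; for $A=\Sigma^{\infty}_{+}BU$ this map is $\Sigma^{\infty}$ applied to the based map $\Sigma^{2}BU\to B^{2}BU$. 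That is precisely where the map in the statement of the lemma enters, and the argument never passes through a delooping of $BU(1)$.
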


\begin{proof}
The weak equivalence
\[
\EnRing[2](MU,H\vee \Sigma^{q+1}H\pi_{q}R)\simeq
\EnRing[2](\Sigma^{\infty}_{+}BU,H\vee \Sigma^{q+1}H\pi_{q}R)
\]
of Theorem~\ref{thmgenthom} is induced by a map which is just the
usual Thom isomorphism map on the underlying spectra, and so we have a
commuting diagram
\[
\xymatrix@-1pc{%
\EnRing[2](MU,H\vee \Sigma^{q+1}H\pi_{q}R)
\ar@{{}{}{}}[r]|(.475){\relax\textstyle\simeq}\ar[d]&
\EnRing[2](\Sigma^{\infty}_{+}BU,H\vee \Sigma^{q+1}H\pi_{q}R)\ar[d]&
\\
\Stable(MU,H\vee \Sigma^{q+1}H\pi_{q}R)_{u}
\ar@{{}{}{}}[r]|(.475){\relax\textstyle\simeq}\ar[d]&
\Stable(\Sigma^{\infty}_{+}BU,H\vee \Sigma^{q+1}H\pi_{q}R)_{u}\ar[d]
\\
\Stable(MU(1),H\vee \Sigma^{q+1}H\pi_{q}R)_{u}
\ar@{{}{}{}}[r]|(.475){\relax\textstyle\simeq}&
\Stable(\Sigma^{\infty}_{+}BU(1),H\vee \Sigma^{q+1}H\pi_{q}R)_{u}
}
\]
with the bottom pair of vertical maps just induced by the inclusions
$MU(1)\to MU$ and $BU(1)\to BU$.   This gives the first step in the
lemma, factoring the map in the statement through the map $\tilde
H^{q+1-m}(BU,\pi_{q}R)\to \tilde H^{q+1-m}(BU(1),\pi_{q}R)$.

We next need to bring in the equivalence in Theorem~\ref{thmbmmain}
and this requires a detour into the category of spectra lying over
$S$. If we write $\Stable_{/S}$ for the derived
space of maps in the category of spectra lying over $S$ and similarly
$\Stable_{/H}$ for the derived space of maps in the category of
spectra lying over $H$, then it is easy to see
that each of the maps
\begin{multline*}
\Stable_{/S}(\Sigma^{\infty}_{+}BU,S\vee \Sigma^{q+1}H\pi_{q}R)\to
\Stable_{/H}(\Sigma^{\infty}_{+}BU,H\vee \Sigma^{q+1}H\pi_{q}R)\to\\
\Stable(\Sigma^{\infty}_{+}BU,H\vee \Sigma^{q+1}H\pi_{q}R)_{u}
\end{multline*}
is a weak equivalence.

Let $I^{\rd}$ be the functor from spectra lying over $S$ back to
spectra that takes the homotopy fiber of the map to $S$.  Then we have
a weak equivalence
\[
\Stable_{/S}(\Sigma^{\infty}_{+}BU,S\vee \Sigma^{q+1}H\pi_{q}R)\to
\Stable(I^{\rd}\Sigma^{\infty}_{+}BU,\Sigma^{q+1}H\pi_{q}R).
\]
The relevance of this that for any augmented $E_{2}$ ring spectrum
$A$ and any spectrum $N$, the diagram
\[
\xymatrix@-1pc{%
\Stable(\Sigma^{-2}I^{\rd}B^{2}A,N)
\ar@{{}{}{}}[r]|(.475){\relax\textstyle\simeq}\ar[d]&
\EnRing[2]_{/S}(A,S\vee N)\ar[d]\\
\Stable(I^{\rd}A,N)\ar@{{}{}{}}[r]|(.475){\relax\textstyle\simeq}&
\Stable_{/S}(A,S\vee N)
}
\]
commutes, where the lefthand vertical arrow is induced by the map
$\Sigma^{2}I^{\rd}A\to I^{\rd}B^{2}A$ and the righthand vertical arrow
is the forgetful map.  The top horizontal map is the map from
\cite[7.4]{bmthh} and the fact that the diagram commutes is clear from
explicit construction given there (in the non-unital context),
cf. \cite[8.2]{bmthh} (and the discussion preceding it). In the
case of $A=\Sigma^{\infty}BU_{+}$, we use the unit $S\to
\Sigma^{\infty}_{+}BU$ to split the augmentation
$\Sigma^{\infty}_{+}BU\to S$, and then just as in the proof of
Theorem~\ref{thmbmmain} we can identify $I^{\rd}\Sigma^{\infty}_{+}BU$
as $\Sigma^{\infty}BU$ and $I^{\rd}B^{2}\Sigma^{\infty}BU_{+}$ as
$\Sigma^{\infty}B^{2}BU$.  Under these identifications
\[
\Sigma^{2}I^{\rd}\Sigma^{\infty}_{+}BU\to I^{\rd}\Sigma^{\infty}_{+}B^{2}BU
\]
becomes $\Sigma^{\infty}$ applied to the based map $\Sigma^{2}BU\to
B^{2}BU$. We then get a commuting diagram
\[
\xymatrix@-1pc{%
\Stable(\Sigma^{-2}\Sigma^{\infty}B^{2}BU,\Sigma^{q+1}H\pi_{q}R)
\ar@{{}{}{}}[r]|(.5125){\relax\textstyle\simeq}\ar[d]
&\EnRing[2](\Sigma^{\infty}_{+}BU,H\vee H\pi_{q}R)\ar[d]\\
\Stable(\Sigma^{\infty}BU,\Sigma^{q+1}H\pi_{q}R)
\ar@{{}{}{}}[r]|(.5125){\relax\textstyle\simeq}
&\Stable(\Sigma^{\infty}BU_{+},H\vee H\pi_{q}R)_{u},
}
\]
completing the proof of the lemma.
\end{proof}

As an immediate consequence of the lemma, we get the following result.

\begin{prop}
The map $\EnRing[2](MU,H\vee H\Sigma^{q+1}\pi_{q}R)\to
\Stable(MU(1),H\vee \Sigma^{q+1}H\pi_{q}R)$ is a split surjection on
homotopy groups.
\end{prop}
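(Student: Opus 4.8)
The plan is to use Lemma~\ref{lemobviousone} to reduce this to an elementary computation in the integral cohomology of $BSU$ and $\bC P^{\infty}$. By that lemma, together with the identifications~\eqref{eqpie2} and~\eqref{eqpihrs}, the map in question is on $\pi_{m}$ the map $\phi^{*}$ induced by $\phi\colon \Sigma^{2}BU(1)\to \Sigma^{2}BU\to B^{2}BU\simeq BSU$,
\[
\phi^{*}\colon\tilde H^{q+3-m}(BSU;\pi_{q}R)\longrightarrow
\tilde H^{q+3-m}(\Sigma^{2}BU(1);\pi_{q}R)=\tilde H^{q+1-m}(BU(1);\pi_{q}R).
\]
Since $H_{*}(BSU;\bZ)$ and $H_{*}(\bC P^{\infty};\bZ)$ are free and finitely generated in each degree, the universal coefficient theorem identifies these groups with the integral cohomology groups tensored with $\pi_{q}R$ and identifies the map with the integral $\phi^{*}$ tensored with $\pi_{q}R$; as the tensor of a split epimorphism of abelian groups with any module is again a split epimorphism, it suffices to treat $\bZ$ coefficients. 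There the target $\tilde H^{q+1-m}(\bC P^{\infty};\bZ)$ is $\bZ\langle x^{j}\rangle$ when $q+1-m=2j$ with $j\geq 1$ and is $0$ otherwise, so the only content is that
\[
\phi^{*}\colon\tilde H^{2j+2}(BSU;\bZ)\longrightarrow \tilde H^{2j}(\bC P^{\infty};\bZ)=\bZ\langle x^{j}\rangle
\]
is onto --- a surjection onto $\bZ$ then splits automatically.

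To prove surjectivity I would factor $\phi^{*}$ as
\[
\tilde H^{2j+2}(B^{2}BU;\bZ)\overto{\sigma^{2}}
\tilde H^{2j+2}(\Sigma^{2}BU;\bZ)=\tilde H^{2j}(BU;\bZ)\overto{i^{*}}
\tilde H^{2j}(BU(1);\bZ),
\]
where $i\colon BU(1)\hookrightarrow BU$ is the inclusion and $\sigma^{2}$ is pullback along $\Sigma^{2}BU\to B^{2}BU$, i.e.\ the iterated cohomology suspension. The map $i^{*}$ is already split surjective: it sends $c_{1}\mapsto x$ and $c_{i}\mapsto 0$ for $i\geq 2$, and admits the additive section $x^{j}\mapsto s_{j}$, where $s_{j}$ is the $j$-th Newton polynomial in the Chern classes (the $j$-th power sum of the Chern roots, so $s_{j}$ restricts along $i$ to $x^{j}$). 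Thus $\phi^{*}$ is onto provided the image of $\sigma^{2}$ meets the coset of $s_{j}$, and for this I would use the Chern class $c_{j+1}\in \tilde H^{2j+2}(B^{2}BU;\bZ)$ together with the identity
\[
\sigma^{2}(c_{j+1})=\pm s_{j},
\]
which gives $\phi^{*}(c_{j+1})=i^{*}(\pm s_{j})=\pm x^{j}$, a generator.

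The heart of the matter --- and the step I expect to be the main obstacle, since the proposition fails if the coefficient in the last identity is a proper multiple --- is to prove $\sigma^{2}(c_{j+1})=\pm s_{j}$ with a \emph{unit} coefficient. I would do this by chasing the Serre spectral sequences of the path--loop fibrations $BU\to P(BBU)\to BBU$ and $BBU\to P(B^{2}BU)\to B^{2}BU$. Contractibility of the total spaces pins down the cohomology suspensions (each of which annihilates decomposables) enough to show that $\sigma^{2}$ carries the indecomposable class $c_{j+1}$ to a generator of the primitive subgroup of $\tilde H^{2j}(BU;\bZ)$; and that primitive subgroup is exactly $\bZ\langle s_{j}\rangle$ and is a free summand, because the coefficient of $c_{1}^{j}$ in $s_{j}$ is $1$. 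Composing the two suspensions yields $\sigma^{2}(c_{j+1})=\pm s_{j}$. (Alternatively one can extract the identity from the delooping identifications $B^{n}BU\simeq \Omega^{\infty}\Sigma^{n+2}ku$.) With it in hand, the integral $\phi^{*}$ is onto in every degree, and tensoring with $\pi_{q}R$ and reassembling over all $m$ gives the asserted split surjection on homotopy groups.
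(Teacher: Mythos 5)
Your reduction is exactly the paper's: Lemma~\ref{lemobviousone} plus universal coefficients turns the statement into surjectivity of $\tilde H^{2j+2}(BSU;\bZ)\to \tilde H^{2j}(BU(1);\bZ)$, and the content is precisely that $c_{j+1}$ maps to $\pm x^{j}$ (a generator, not a proper multiple of one) --- this is the paper's Proposition~\ref{propbott2}. You have also correctly located the crux in the identity $\sigma^{2}(c_{j+1})=\pm s_{j}$ with \emph{unit} coefficient, and your bookkeeping around it ($i^{*}s_{j}=x^{j}$, $P\tilde H^{2j}(BU)=\bZ\langle s_{j}\rangle$, splitting for free) is fine.

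The gap is in your proof of that identity. ``Contractibility of the total spaces pins down the cohomology suspensions enough to show that $\sigma^{2}$ carries $c_{j+1}$ to a generator of the primitives'' is an assertion, not an argument, and it is exactly the kind of assertion that fails in the nearest analogous situation: the double suspension $\tilde H^{*+2}(B^{4}BU)\to \tilde H^{*}(B^{2}BU)$ sits in the identical formal setup (iterated path--loop fibrations with contractible total spaces, suspensions killing decomposables and landing in primitives), yet by Proposition~\ref{propbott4} it carries the polynomial generator $y_{m+2}$ of $H^{*}(BU\six)$ to $(m+1)$ or $p^{t-1}$ times a generator modulo decomposables --- which is why Theorem~\ref{thmE4} is true at all. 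So formal properties of the suspension cannot yield the unit coefficient; some input special to $B^{2}BU\to \Sigma^{2}BU$ is required. The paper supplies it by quoting Adams's computation of the Bott map on homology, $B_{*}b_{m}=(-1)^{m}s_{m+1}$ with $s_{m+1}=b_{1}^{m+1}+\dotsb$, and pairing against $c_{m+1}$ (dual to $b_{1}^{m+1}$); it also notes the alternative you are gesturing at, via the edge homomorphism in the Rothenberg--Steenrod spectral sequence applied twice. That alternative does work, but only because for the two fibrations in question ($SU\to ESU\to BSU$ and $BU\simeq\Omega SU\to PSU\to SU$) all the (co)homology is torsion free and a rank count forces $E_{2}=E_{\infty}$; degeneration then exhibits the image of the suspension as a \emph{direct summand} of $H_{2j+1}(SU)$ contained in the primitives, forcing $\sigma_{*}b_{j}=\pm\epsilon_{2j+1}$ and dually $\sigma^{*}e_{2j+1}=\pm s_{j}$. (For $B^{4}BU$ this breaks because $H^{*}(B^{3}BU)=H^{*}(U\langle 5\rangle)$ has torsion and the spectral sequences do not degenerate formally.) Your write-up needs either this degeneration argument spelled out, or simply the citation to \cite[II\S12]{AdamsBlueBook} as in Proposition~\ref{propbott2}; without one of these the decisive step is unproved.
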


\begin{proof}
Applying the lemma, we can compute the induced map on homotopy groups
by computing the map on integral homology
\[
H_{*}BU(1)\to H_{*}BU\to H_{*+2}BSU
\]
and using universal coefficients.  Showing that the map on homology is
a split injection is an easy exercise using the calculation of the
Bott map (see Proposition~\ref{propbott2} below) or the edge
homomorphism in the Rothenberg-Steenrod spectral sequence
(applied twice, each spectral sequence degenerating at $E_{2}$ for
formal reasons).
\end{proof}

Finally, we can complete the proof of the inductive step.  To simplify
notation, we rewrite the diagram of fibration
sequences~\eqref{eqcompfib}
as
\[
\xymatrix{%
\ar[r]&F\ar[r]^{i}\ar[d]_{h_{F}}&E\ar[r]^{g}\ar[d]_{h_{E}}&B\ar[d]_{h_{B}}\\
\ar[r]&F'\ar[r]_{i'}&E'\ar[r]_{g'}&B'.
}
\]
We have that both base spaces $B$ and $B'$ are connected.  For each
basepoint $e$ of $E$, let $F_{e}$ denote the components of $F$ that
lie above the component of $e$, and similarly for $E'$ and $F'$.
Recall that in the long exact sequence of a fibration, at the
$\pi_{0}$ level ``exact'' means that for each $e$ in $E$,
$\pi_{0}F_{e}$ is a transitive $\pi_{1}(B,g(e))$-set with isotropy 
group at the component of $e$ (in $F_{e}$) the image of
$\pi_{1}(E,e)$. 

In our case, by
the inductive hypothesis, we have that $\pi_{1}(E,e)$ is trivial for
all $e$, and by inspection, we see that
\[
\pi_{1}(E',e')\iso
\pi_{1}(\Stable(\Sigma^{\infty}_{+}\bC P^{\infty},R_{q-1})_{u},e')
\]
is trivial for all $e'$.  It follows that for each $e$, $\pi_{0}F_{e}$
is a free transitive $\pi_{1}(B,g(e))$-set and for each $e'$,
$\pi_{0}F'_{e'}$ is a free transitive $\pi_{1}(B',g'(e'))$-set.  By
the previous proposition, $\pi_{1}(B,g(e))\to
\pi_{1}(B',g'(h_{E}(e)))$ is a surjection for every $e$ in $E$, and so
$\pi_{0}F_{e}\to \pi_{0}F'_{h_{E}(e)}$ is a surjection for every $e$
in $E$.  Letting $e$ vary over a choice of basepoint in each component
of $E$, we then see that the map
\[
\pi_{0}F\to \pi_{0}E\times_{\pi_{0}E'}\pi_{0}F'
\]
is surjective, which proves the inductive step for~(ii).  Since $\pi_{0}E\to
\pi_{0}E'$ is surjective by inductive hypothesis~(i), it follows
that $\pi_{0}F\to \pi_{0}F'$ is surjective, which proves the inductive
step for~(i).  This completes the proof of Theorem~\ref{thmMU}.

\begin{rem}\label{remMSO}
The proof for $MSO$ follows the same outline as the proof for $MU$,
taking advantage of the $2$-local equivalence between $BSO$ and $BSp$
and $MSO$ and $MSp$ (the latter attributed to \cite{Novikov} in
\cite{Stong-SC}).  Since $1/2\in \pi_{0}R$, every map of ring spectra
from $MSO$ to $R$ extends uniquely to a map of ring spectra from $MSp$
to $R$ and is determined by its restriction to a map of spectra
$MSp(1)\to R$, inducing a bijection $\EnRing[2](MSO,R)\to
\Stable(MSp(1),R)_{u}$ (cf.~\cite[7.5]{ConnerFloyd}).  We have here
that $H_{*}(B^{2}BSO;Z_{(2)})\iso H_{*}(Sp/SU;Z_{(2)})$ is torsion
free and concentrated in even degrees, and the rest of the argument
goes through as above.
\end{rem}

\section{Proof of Theorem~\ref{thmE4}}
\label{secE4}

In this section we show that not all ring spectrum maps $MU\to MU$ are
represented by $E_{4}$ ring maps.  We proceed by studying the
forgetful map from the set $\pi_{0}\EnRing(MU,MU)$ of self-maps of
$MU$ in the homotopy category of $E_{n}$ ring spectra to the set
$\HoRing(MU,MU)$ of self-maps of $MU$ in the category of ring spectra.
Although we do not obtain complete results, we do obtain enough to see
that the map is not surjective for $n\geq 4$.

We begin with a refinement of the work in the previous section.
Since the target $MU$ is an $E_{\infty}$ ring spectrum and comes with
a canonical $E_{\infty}$ ring map $MU\to MU$ (namely, the identity),
Theorem~\ref{thmThomIso} gives us a canonical weak equivalence
\[
\EnRing(MU,MU)\simeq \EnRing(\Sigma^{\infty}_{+}BU,MU)
\]
for all $n$.  Using the adjunction of \cite[IV.1.8]{mayeinf}, we can
identify $\EnRing(\Sigma^{\infty}_{+}BU,MU)$ as the derived mapping
space
\[
\EnSpace(BU,\Omega^{\infty}MU^{\times})=\EnSpace(BU,SL_{1}MU)
\]
in the category of $E_{n}$ spaces, where we regard $\Omega^{\infty}MU$
as an $E_{n}$ space via the multiplicative (rather than additive)
$E_{\infty}$ structure.  Here $SL_{1}MU$ denotes the $1$-component of
$\Omega^{\infty}MU$; since $BU$ is connected, any $E_{n}$ map must
land in the $1$-component.   As $BU$ and $SL_{1}MU$ are both
connected, the theory of iterated loop spaces gives us a weak
equivalence
\[
\EnSpace(BU,SL_{1}MU)\simeq \Top_{*}(B^{n}BU,B^{n}SL_{1}MU).
\]
Because $SL_{1}MU$ is a connected $E_{\infty}$ space, it is the zeroth
space of a connective spectrum that we denote as $sl_{1}MU$.  We then
have an identification of the homotopy groups of $\EnRing(MU,MU)$ in
terms of the cohomology theory $sl_{1}MU$. Specifically,
\begin{equation}\label{eqbnbu}
\pi_{q}\EnRing(MU,MU)\iso\pi_{q} \Top_{*}(B^{n}BU,B^{n}SL_{1}MU)
=(\widetilde{sl}_{1}MU)^{n-q}(B^{n}BU)
\end{equation}
(where tilde indicates the reduced cohomology theory),
and in particular
\[
\pi_{0}\EnRing(MU,MU)\iso (\widetilde{sl}_{1}MU)^{n}(B^{n}BU).
\]
We may further identify $(\widetilde{sl}_{1}MU)^{n}(B^{n}BU)$ as
$(\widetilde{sl}_{1}MU)^{n}(BU\six[n+2])$ when $n$ is even or
$(\widetilde{sl}_{1}MU)^{n}(U\six[n+2])$ when $n$ is odd, by Bott
periodicity.  We regard~\eqref{eqbnbu} as a refinement of
Theorem~\ref{thmPT}, as indicated in the introduction.

The parallel (now classical) theory for maps of ring spectra
$MU$ to $MU$, q.v.~\cite[II\S4]{AdamsBlueBook}, provides the identification
\begin{multline*}
\HoRing(MU,MU)\iso \HSpace(BU,SL_{1}MU) \iso \pi_{0}\Top_{*}(BU(1),SL_{1}MU)\\
=(\widetilde{sl}_{1}MU)^{0}(BU(1)),
\end{multline*}
where $\HSpace$ denotes the set of maps of $H$-spaces (maps in the
homotopy category that respect the unit and multiplication in the
homotopy category).  As both the identification of
$\pi_{0}\EnRing(MU,MU)$ and $\HoRing(MU,MU)$ in terms of reduced
$sl_{1}MU$-cohomology are induced by the Thom isomorphism for the
identity map of $MU$ together with the $(\Sigma^{\infty}$,
$\Omega^{\infty})$ adjunction, we immediately obtain the following
comparison result.

\begin{prop}\label{propbu1tobnbu}
The map
\begin{multline*}
(\widetilde{sl}_{1}MU)^{n}(B^{n}BU)\iso \pi_{0}\EnRing(MU,MU)\\
\to \HoRing(MU,MU)\iso (\widetilde{sl}_{1}MU)^{0}(BU(1))
\end{multline*}
induced by the forgetful map $\pi_{0}\EnRing(MU,MU)\to \HoRing(MU,MU)$
is the map on reduced $sl_{1}MU$ cohomology induced by the usual map
\[
\Sigma^{n}BU(1)\to \Sigma^{n}BU\to B^{n}BU.
\]
\end{prop}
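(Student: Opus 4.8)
The proposition is essentially a naturality statement, and the plan is to prove it by a diagram chase parallel to the proof of Lemma~\ref{lemobviousone}. The key observation is that the two identifications in the statement rest on the same ingredients: the Thom isomorphism for the identity orientation $\id\colon MU\to MU$, the adjunction $\EnRing(\Sigma^\infty_+BU,MU)\homeq\EnSpace(BU,SL_1MU)$ of \cite[IV.1.8]{mayeinf}, and, on the $E_n$ side, the passage from $E_n$ spaces to $n$-fold loop spaces. So I would follow a single $E_n$ self-map $f$ of $MU$ through each route and check that the answers agree.

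First I would factor the forgetful map through the inclusion $BU(1)\hookrightarrow BU$. The equivalence $\EnRing(MU,MU)\homeq\EnRing(\Sigma^\infty_+BU,MU)$ of Theorem~\ref{thmThomIso} is induced by a map which on underlying spectra is the ordinary homology Thom isomorphism, and the classical identification of $\HoRing(MU,MU)$ by restriction to $MU(1)\homeq\Sigma^{-2}\bC P^{\infty}$ (q.v.~\cite[II\S4]{AdamsBlueBook}) is built the same way. Exactly as in the first commuting diagram in the proof of Lemma~\ref{lemobviousone}, these are compatible with restriction along $MU(1)\to MU$ and along $\Sigma^\infty_+BU(1)\to\Sigma^\infty_+BU$, so the forgetful map factors as
\[
(\widetilde{sl}_1MU)^n(B^nBU)\to(\widetilde{sl}_1MU)^0(BU)\to(\widetilde{sl}_1MU)^0(BU(1)),
\]
the second arrow being restriction along $BU(1)\hookrightarrow BU$ and the first the map induced by forgetting the $E_n$ structure on a map $BU\to SL_1MU$ (equivalently $\Sigma^\infty_+BU\to MU$) down to an $H$-space map. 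It then remains to identify this first arrow with restriction along the map $\Sigma^nBU\to B^nBU$ of the statement.

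For that, I would unwind the equivalence $\EnSpace(BU,SL_1MU)\homeq\Top_*(B^nBU,B^nSL_1MU)$ from iterated loop space theory. Since $SL_1MU=\Omega^\infty sl_1MU$ is a connected $E_\infty$ space by multiplicative infinite loop space theory \cite[\S4]{May-mult}, we have $B^nSL_1MU\homeq\Omega^\infty\Sigma^nsl_1MU$, so $\Top_*(B^nBU,B^nSL_1MU)$ computes $(\widetilde{sl}_1MU)^n(B^nBU)$; conversely an $E_n$ (equivalently $n$-fold loop) map $BU\to SL_1MU$ is recovered from a based map $g\colon B^nBU\to B^nSL_1MU$ by applying $\Omega^n$ and restricting along the delooping equivalence $BU\to\Omega^nB^nBU$. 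Forgetting down to $H$-space maps discards exactly the extra structure; under the $(\Sigma^n,\Omega^n)$ adjunction this operation becomes precomposition of $g$ with the based map $\Sigma^nBU\to B^nBU$ adjoint to $BU\to\Omega^nB^nBU$, so on $sl_1MU$-cohomology the first arrow above is restriction along $\Sigma^nBU\to B^nBU$. Composing with the restriction along $BU(1)\hookrightarrow BU$ from the first step yields restriction along $\Sigma^nBU(1)\to\Sigma^nBU\to B^nBU$, finishing the proof.

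I expect the difficulty to be bookkeeping rather than substance. One has to confirm that the $E_n$ Thom isomorphism of Theorem~\ref{thmThomIso} (assembled from the $E_n$-Thom-spectrum structure of \cite{lms} and the module-theoretic argument of Section~\ref{secThomIso}) restricts to the classical Thom isomorphism for $MU(1)$ used in \cite{AdamsBlueBook}, and that the iterated delooping and $(\Sigma^\infty,\Omega^\infty)$ equivalences are the obvious natural ones --- exactly the kind of checks carried out, for $n=2$ and a square-zero target, in the proof of Lemma~\ref{lemobviousone}. Since the target $MU$ here is already $E_\infty$, there is no need for the spectrum-level bar-construction detour through \cite{bmthh} that complicated that argument, so beyond transcribing it to general $n$ I anticipate no essential new obstacle.
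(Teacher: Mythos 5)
Your proposal is correct and takes essentially the same approach as the paper: the paper offers no separate proof, simply observing that both identifications are induced by the Thom isomorphism for the identity map of $MU$ together with the $(\Sigma^{\infty},\Omega^{\infty})$ adjunction, so the comparison is immediate. Your writeup is a careful unwinding of exactly that observation, patterned on the proof of Lemma~\ref{lemobviousone}, and the details you fill in (compatibility of the $E_{n}$ Thom isomorphism with the classical one on underlying spectra, and the identification of the forgetful map with precomposition along $\Sigma^{n}BU\to B^{n}BU$ under the delooping adjunctions) are the right ones.
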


When $n$ is even, we can take advantage of Bott periodicity
$B^{n}BU\simeq BU\six[n+2]$ to identify $BU(1)\to BU\six[n+2]$ as the
map induced by the Bott map, whose effect on complex oriented homology
theories is well-understood \cite[II\S12]{AdamsBlueBook} (at least
after composing with the map $BU\six[n+2]\to BU$).  Of course,
$sl_{1}MU$ is not even a ring theory, so not complex oriented, but we
can use the
Atiyah-Hirzebruch spectral sequence to obtain some information.  For
example, the integral cohomology $H^{*}(BU(1))=H^{*}(\bC P^{\infty})$
is the polynomial ring $\bZ[x]$ and in particular is in each degree
a finitely generated free abelian group and is concentrated in even
degrees.  The same is true of $\pi_{*}sl_{1}MU$, and so the
Atiyah-Hirzebruch spectral sequence has $E_{2}=E_{\infty}$, with no
extension problems, giving us a non-canonical isomorphism
\[
(\widetilde{sl}_{1}MU)^{q}(BU(1))\iso
  \bigoplus_{m>0}\tilde H^{m+q}(BU(1))\otimes \pi_{m}MU,
\]
noting that $\pi_{m}sl_{1}MU=\pi_{m}MU$ for $m>0$ whereas
$\pi_{0}sl_{1}MU=0$.  Likewise, in the case $n=2$ and $n=4$, we have
that $H^{*}B^{2}BU\iso H^{*}BSU$ is the polynomial ring
$\bZ[c_{2},c_{3},c_{4},\dotsc ]$ on Chern classes, and
$H^{*}B^{4}BU\iso H^{*}BU\six$ is a polynomial ring
$\bZ[y_{3},y_{4},y_{5},\dotsc]$ on classes in degrees $6,8,10,\dotsc$
\cite[4.7]{AHSCube}.  We then get non-canonical isomorphisms
\begin{gather*}
(\widetilde{sl}_{1}MU)^{q}(BSU)\iso
  \bigoplus_{m>0}\tilde H^{m+q}(BSU)\otimes \pi_{m}MU\\[1ex]
(\widetilde{sl}_{1}MU)^{q}(BU\six)\iso
  \bigoplus_{m>0}\tilde H^{m+q}(BU\six)\otimes \pi_{m}MU.
\end{gather*}
Up to filtration (but only up to filtration), we can identify the maps
\begin{gather*}
(\widetilde{sl}_{1}MU)^{2}(BSU)\to
(\widetilde{sl}_{1}MU)^{0}(BU(1))
\\
(\widetilde{sl}_{1}MU)^{4}(BU\six)\to
(\widetilde{sl}_{1}MU)^{0}(BU(1))
\end{gather*}
in terms of the maps $\tilde H^{*+2}(BSU)\to \tilde H^{*}(BU(1))$ and
$\tilde H^{*+4}(BU\six) \to \tilde H^{*}(BU(1))$ on ordinary
cohomology.  We now compute the maps on ordinary cohomology.

\begin{prop}\label{propbott2}
The map $H^{*+2}(BSU)\to H^{*}(BU(1))$ kills decomposable elements and
sends $c_{m+1}$ to $(-1)^{m}x^{m}$.
\end{prop}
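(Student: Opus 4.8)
The map in question is the homomorphism $f^{*}\colon H^{*+2}(BSU)\to \tilde H^{*}(BU(1))$ induced on ordinary cohomology by the composite
\[
f\colon \Sigma^{2}BU(1)\to \Sigma^{2}BU\to B^{2}BU\simeq BSU
\]
of Proposition~\ref{propbu1tobnbu}, where we use the suspension isomorphism $\tilde H^{k}(\Sigma^{2}BU(1))\iso \tilde H^{k-2}(BU(1))$. The plan is first to dispose of the decomposables and then to pin down $f^{*}$ on polynomial generators by a Chern-character computation. For the first part: $\Sigma^{2}BU(1)$ is a suspension, so every product of positive-degree reduced classes in $\tilde H^{*}(\Sigma^{2}BU(1))$ vanishes; since $H^{*}(BSU)=\bZ[c_{2},c_{3},\dotsc]$ is polynomial on positive-degree generators, the ring map $f^{*}$ carries each decomposable element to a product of positive-degree classes, hence to $0$. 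This reduces the theorem to computing the integers $\lambda_{m}$ with $f^{*}(c_{m+1})=\lambda_{m}x^{m}$ (recall $c_{m+1}$ lives in degree $2m+2$).

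To identify $f$, I would use that the canonical delooping $BU\to \Omega^{2}B^{2}BU$ is a weak equivalence, so that $f$ is, up to homotopy, the adjoint of the inclusion $j\colon BU(1)\hookrightarrow BU$. Composing further with the forgetful map $\iota\colon B^{2}BU\simeq BSU\hookrightarrow BU$, and checking (using $\Omega^{2}BSU\simeq BU$ and $\Omega^{2}BU\simeq\bZ\times BU$, under which $\Omega^{2}\iota$ is the inclusion of the basepoint component) that $\iota\circ f$ is adjoint to $j$, one sees that $\iota\circ f$ classifies the virtual bundle $V$ on $\Sigma^{2}BU(1)$ which is the image of the reduced tautological class $[L]-1\in\widetilde K^{0}(BU(1))$ under the Bott periodicity isomorphism $\widetilde K^{0}(BU(1))\iso \widetilde K^{0}(\Sigma^{2}BU(1))$. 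Since $\iota^{*}c_{k}=c_{k}$ for $k\ge 2$, this gives $f^{*}(c_{m+1})=c_{m+1}(V)$, and it remains only to compute the Chern classes of $V$.

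For that I would use the Chern character. On $BU(1)$ we have $\mathrm{ch}([L]-1)=e^{x}-1=\sum_{k\ge 1}x^{k}/k!$, and the Chern character of $V$ is the double suspension of $\mathrm{ch}([L]-1)$, so under the suspension isomorphism $\mathrm{ch}_{m+1}(V)$ corresponds to $\mathrm{ch}_{m}([L]-1)=x^{m}/m!$. On the suspension $\Sigma^{2}BU(1)$ all products of positive-degree classes vanish, so Newton's identities degenerate to $p_{k}=(-1)^{k-1}k\,c_{k}$ for the $k$-th power sum $p_{k}$ of the Chern roots; together with $p_{k}=k!\,\mathrm{ch}_{k}$ this yields $c_{k}(V)=(-1)^{k-1}(k-1)!\,\mathrm{ch}_{k}(V)$. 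Taking $k=m+1$ gives $f^{*}(c_{m+1})=c_{m+1}(V)=(-1)^{m}m!\cdot(x^{m}/m!)=(-1)^{m}x^{m}$, as claimed.

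The hard part will be the bookkeeping in the middle step: checking that the adjunction and Bott periodicity fit together so that $\iota\circ f$ is \emph{exactly} the Bott suspension of $[L]-1$ (and not its negative), and that the sign of the suspension isomorphism matches the one built into the statement. All of this is standard once orientation conventions are fixed; the underlying computation is the classical effect of the Bott map on complex-oriented homology theories recorded in \cite[II\S12]{AdamsBlueBook}. I would also note that the whole computation may be carried out rationally, since $H^{*}(BSU)$ and $\tilde H^{*}(BU(1))$ are finitely generated free abelian and concentrated in even degrees, so that the rational Chern-character computation determines the integral answer.
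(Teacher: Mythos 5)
Your argument is correct, and the first half (decomposables die because the source is a suspension) is identical to the paper's. For the value on generators, however, you take a genuinely different route. The paper works in \emph{homology}: it quotes Adams's computation \cite[II\S12]{AdamsBlueBook} that the Bott map $B_{*}\colon H_{*}(BU)\to H_{*}(BU)$ kills decomposables and sends $b_{m}$ to $(-1)^{m}s_{m+1}$ (Newton polynomial in the $b_{j}$), and then extracts the coefficient via the Kronecker pairing, using that $c_{m+1}$ is dual to $b_{1}^{m+1}$ in the monomial basis. You instead work in \emph{cohomology} via the Chern character: identify $\iota\circ f$ as classifying the Bott suspension of $[L]-1$, note $\mathrm{ch}([L]-1)=e^{x}-1$, and use the degeneration of Newton's identities on a suspension ($p_{k}\equiv(-1)^{k-1}kc_{k}$ mod decomposables, $p_{k}=k!\,\mathrm{ch}_{k}$) to get $c_{m+1}(V)=(-1)^{m}m!\cdot x^{m}/m!=(-1)^{m}x^{m}$; your observation that the groups involved are free and finitely generated correctly upgrades the rational computation to an integral one. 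What your approach buys is independence from Adams's explicit homology formula, replacing it with the (arguably more standard) compatibility of $\mathrm{ch}$ with Bott periodicity; what it costs is exactly the sign bookkeeping you flag, i.e., fixing the orientation of the Bott class so that the suspension of $\mathrm{ch}_{m}$ is $+\mathrm{ch}_{m+1}(V)$ rather than its negative. Since the two methods produce the same answer $(-1)^{m}x^{m}$, your conventions are consistent with the paper's, and the argument stands.
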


\begin{proof}
The map clearly kills products as it is induced by the map of spaces
\[
\Sigma^{2}BU(1)\to \Sigma^{2}BU\to BSU,
\]
and products in $H^{*}(\Sigma^{2}BU(1))$ are zero.  To see where the
element $c_{m+1}$ goes, we note that the composite map
\[
\Sigma^{2}BU\to BSU\to BU
\]
is the Bott map $B$, whose effect on homology was studied in
\cite[II\S12]{AdamsBlueBook}.  We write
$H_{*}(BU)=\bZ[b_{1},b_{2},\dotsc]$, where the $b_{m}$ are the usual
generators: $b_{m}$ is the image of the usual generator of
$H_{2m}(BU(1))$ which is dual to $x^{m}\in
H^{2m}(BU(1))$.   Then on homology $B_{*}\colon H_{*}(BU)\to
H_{*}(BU)$ kills decomposable elements and sends $b_{m}$ to
$(-1)^{m}s_{m}$, where $s_{m}=q_{m}(b_{1},\dotsc,b_{m})$ and $q_{m}$
is the $m$-th Newton polynomial defined by the relationship
\[
q_{m}(\sigma_{1},\dotsc,\sigma_{m})=t_{1}^{m}+\dotsb +t_{k}^{m}
\]
for $\sigma_{j}$ the $j$-th elementary symmetric polynomial in
$t_{1},\dotsc, t_{k}$.  Then
\[
s_{m+1}=b_{1}^{m+1}+\text{terms involving }b_{j}\text{ for }j>1.
\]
On cohomology, $H^{*}(BU)\to H^{*}(BSU)$ is the
quotient by the Chern class $c_{1}$, and so we can compute the map in
the statement by means of the Bott map $B^{*}\colon H^{*}(BU)\to H^{*}(BU)$.
Using the Kronecker pairing of homology with cohomology, we see that
\begin{align*}
\langle B^{*}c_{m+1},b_{m}\rangle
&= \langle c_{m+1},B_{*}b_{m}\rangle\\
&= \langle c_{m+1},(-1)^{m}s_{m+1}\rangle\\
&= \langle c_{m+1},(-1)^{m}b_{1}^{m+1}\rangle=(-1)^{m}
\end{align*}
since $c_{m+1}$ is the dual of $b_{1}^{m+1}$ in the monomial basis of the
$b_{m}$'s.
\end{proof}

\begin{prop}\label{propbott4}
The map $H^{*+4}(BU\six)\to H^{*}(BU(1))$ kills decomposable elements
and sends the polynomial generator $y_{m+2}$ in $H^{2m+4}(BU\six)$ to
\[
\begin{cases}
(-1)^{p^{t}}p^{t-1}x^{p^{t}-1}&m+1=p^{t}\text{ for some prime }p, t>0\\
(-1)^{m+1}(m+1)x^{m}&\text{otherwise}
\end{cases}
\]
\end{prop}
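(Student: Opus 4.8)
The plan is to imitate the proof of Proposition~\ref{propbott2}, replacing the single Bott map by its twofold iterate and feeding in the integral structure of $H^{*}(BU\six)=H^{*}(B^{4}BU)$. As in that proof, the map kills decomposable elements, since it is induced by a map that factors through $\Sigma^{4}BU(1)$ and all cup products in $H^{*}(\Sigma^{4}BU(1))$ vanish; hence it is determined by the images of the polynomial generators $y_{m+2}$, and since $H^{2m}(BU(1);\bZ)=\bZ\langle x^{m}\rangle$, each image is $\mu_{m}x^{m}$ for a well-defined integer $\mu_{m}$. Writing $p\colon \Sigma^{4}BU\to B^{4}BU\simeq BU\six$ for the map adjoint to the delooping equivalence $BU\simeq \Omega^{4}B^{4}BU$, the map of the statement is the composite of the fourfold cohomology suspension $p^{*}\colon H^{2m+4}(BU\six)\to H^{2m+4}(\Sigma^{4}BU)\iso \tilde H^{2m}(BU)$ with restriction along $i\colon BU(1)\to BU$; so $\mu_{m}x^{m}=i^{*}p^{*}y_{m+2}$, and the task is to compute $\mu_{m}$.

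First I would compose with the canonical map $q\colon BU\six\to BU$ from the $5$-connected cover. Under the Bott-periodicity identification $B^{4}BU\simeq BU\six$, the composite $qp\colon \Sigma^{4}BU\to BU$ is the twofold iterate of the Bott map, so its effect on homology is computed by applying twice the single-step formula recorded in the proof of Proposition~\ref{propbott2} (namely $b_{n}\mapsto (-1)^{n}s_{n+1}$ on the monomial basis and decomposables to zero, where $s_{n+1}$ is the $(n+1)$-st Newton polynomial in the $b_{i}$'s). Pairing the result against $c_{m+2}$ --- which is dual to $b_{1}^{m+2}$ in the monomial basis --- gives, exactly as in the one-step computation,
\[
\langle (qp)^{*}c_{m+2},\,\Sigma^{4}b_{m}\rangle=(-1)^{m+1}(m+1).
\]
On the other hand, since $p^{*}$ kills decomposables while $H^{2m+4}(BU\six)$ is infinite cyclic modulo decomposables, we may write $q^{*}c_{m+2}=d_{m}\,y_{m+2}+(\text{decomposables})$ for a positive integer $d_{m}$ (normalizing the sign of $y_{m+2}$ so that $d_{m}>0$). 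Applying $p^{*}$, restricting along $i$, and comparing with the display yields
\[
d_{m}\,\mu_{m}=(-1)^{m+1}(m+1).
\]

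It remains to compute $d_{m}$, the factor by which $c_{m+2}$ becomes more divisible upon restriction to $BU\six$; this is where the prime-power dichotomy originates. The claim is that $d_{m}=p$ when $m+1=p^{t}$ is a power of a prime $p$ and $d_{m}=1$ otherwise; granting this, and using that $(-1)^{p^{t}}=(-1)^{m+1}$ in the prime-power case, the last display becomes precisely the formula of the proposition. I would extract the value of $d_{m}$ from the description $H^{*}(BU\six;\bZ)=\bZ[y_{3},y_{4},y_{5},\dotsc]$ of \cite[4.7]{AHSCube} by tracking the images of the Chern classes; concretely one runs the Serre (or Eilenberg--Moore) spectral sequence of the fibration $BU\six\to BSU\overto{c_{2}}K(\bZ,4)$, in which the prime $p$ enters solely through the $p$-local structure of $H^{*}(K(\bZ,4))$ --- the divided-power / reduced-power behavior that forces exactly one extra factor of $p$ among the indecomposables precisely in the degrees where $m+1$ is a power of $p$.

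The factorization through the cohomology suspension, the iterated Bott-map computation, and the sign bookkeeping are routine upgrades of the proof of Proposition~\ref{propbott2}. The substantive point, and the main obstacle, is the last step: the divisibility statement $d_{m}=p$ (respectively $d_{m}=1$), i.e.\ determining exactly when $c_{m+2}$ acquires an extra prime factor in $H^{*}(BU\six)$.
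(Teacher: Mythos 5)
Your overall strategy is the same as the paper's: kill decomposables because the map factors through a suspension, compute the composite $H^{*+4}(BU)\to H^{*}(BU(1))$ via the twofold iterate of the Bott map on homology (obtaining $c_{m+2}\mapsto(-1)^{m+1}(m+1)x^{m}$, with the same sign bookkeeping), and then account for the failure of $H^{*}(BU)\to H^{*}(BU\six)$ to be surjective by a divisibility factor $d_{m}$ satisfying $d_{m}\mu_{m}=(-1)^{m+1}(m+1)$. That reduction is correct and matches the paper.

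The gap is exactly where you flag it: the claim that $d_{m}=p$ when $m+1=p^{t}$ and $d_{m}=1$ otherwise is asserted, not proved. Your sketch (Serre or Eilenberg--Moore spectral sequence of $BU\six\to BSU\to K(\bZ,4)$ plus ``divided-power / reduced-power behavior'') points in the right direction but does not pin down the delicate part, namely that in the prime-power case the image of $c_{m+2}$ is exactly $p$ times a generator modulo decomposables and not a higher power of $p$. The paper does not rederive this either; it cites \cite[4.5, 4.6]{AHSCube} for the structure of $H^{*}(BU\six)$ and the images of the Chern classes, and it records the one nontrivial verification your sketch omits: the factor is $p^{1}$ rather than $p^{2}$ or more because the transgressive elements $u_{k}$ (in the notation of \cite[4.5]{AHSCube}) all have nontrivial Bocksteins. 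To complete your argument you would either need to supply that Bockstein argument (or an equivalent computation bounding the divisibility from above) or invoke the Ando--Hopkins--Strickland results directly.
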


\begin{proof}
As in the proof of the previous proposition, the map clearly kills
decomposables, and we approach the problem using the Bott map
$B^{2}\colon \Sigma^{4}BU\to BU$.  Since the Bott map on homology
$B_{*}$ kills decomposables and $B_{*}b_{m}=(-1)^{m}s_{m+1}$, using
\[
s_{m+1}=(-1)^{m}(m+1)b_{m+1}+\text{decomposables}
\]
we see that $B_{*}^{2}b_{m}=(-1)^{m+1}(m+1)s_{m+2}$ and that the
composite map $H^{*+4}(BU)\to H^{*}(BU(1))$ takes $c_{m+2}$ to
$(-1)^{m+1}(m+1)x^{m}$.  Unlike in the previous proposition, the map
$H^{*}(BU)\to H^{*}(BU\six)$ is not onto.  By \cite[4.6]{AHSCube}, for
$m+1\neq p^{t}$, we can take the generator in $H^{2(m+2)}(BU\six)$ to
be the image of $c_{m+2}$.  By \cite[4.5]{AHSCube}, for $m+1=p^{t}$,
the image of $c_{m+2}$ is up to decomposables $p$ times a generator in
$H^{2(m+2)}(BU\six)$. (It is $p^{1}$ times a generator rather than
some higher power of $p$ since the trangressive elements $u_{k}$ (in
the notation of \cite[4.5]{AHSCube}) all have non-trivial Bocksteins.)
This completes the proof.
\end{proof}

In the $n=4$ case, we have that the maps $H^{2m+4}(BU\six)\to
H^{2m}(BU(1))$ are surjective for $m=1$ and $m=2$.  This says that for
any $a_{1}\in \pi_{2}MU$ and $a_{2}\in \pi_{4}MU$, there exist $E_{4}$
ring maps $MU\to MU$ whose coordinates are of the form
\[
x+a_{1}x^{2}+\dotsb \qquad \text{and}\qquad x+a_{2}x^{3}+\dotsb,
\]
but because of the filtration issue above, we cannot be sure exactly
which coordinates of these forms represent $E_{4}$ ring maps without
further work.  On the
other hand, the map $H^{2m+4}(BU\six)\to H^{2m}(BU(1))$ is not
surjective for $m=3$ but has image divisible by $2$.  This has the
consequence that if we look at any map of ring spectra
$f\colon MU\to MU$ corresponding to a coordinate of the form
\[
x+a_{3}x^{4}+\dotsb
\]
where $a_{3}\in \pi_{6}MU$ is not divisible by $2$, then $f$ cannot be
represented by an $E_{4}$ ring map $MU\to MU$.  (Similar arguments can
obviously be made at other primes.)

\section{Proofs for Section~\ref{secmodel}}\label{secpfmodel}

In this section we prove Theorems~\ref{thmOpModCat}--\ref{thmOpCof} and
construct the model structure in Theorem~\ref{thmOpComp2}.  We base
our approach on \cite[\S11--12]{EM1}, which worked in the context of
simplicial sets, but which generalizes to the current context.  For
convenience and to make this section more self-contained for future
reference, we restate (and generalize) the results as
Theorems~\ref{thmgenmodel}, \ref{thmgenchange}, and~\ref{thmgencell}
below.

Because we have already proved the theorems in Section~\ref{secmodel}
that involve functors between different categories of spectra, we can
now work with a single model category of spectra throughout this section.
We let $\Spectra$ denote one of the following
model categories:
\begin{enumerate}
\item The category $\Sigma_{*}\aS$ of symmetric
spectra with its positive stable model structure \cite[\S14]{MMSS}.
\item The category $\aI\aS$ of orthogonal spectra with its
positive model structure \cite[\S14]{MMSS}.
\item The category $\EKMM$ of EKMM
$S$-modules with its standard model structure \cite[VII\S4]{ekmm}.
\item The category $\LMS[\bL]$ of EKMM $\bL$-spectra with its standard
model structure \cite[VII\S4]{ekmm}.
\end{enumerate}
(We extend the convention used throughout the paper that the
unmodified word ``spectrum'' means precisely an object of $\Spectra$.)
We regard the category $\Spectra$ as a cofibrantly generated model
category in its standard way, and in the arguments below we use $I$ to
denote the standard set of
generating cofibrations and $J$ to denote the standard set of
generating acyclic
cofibrations.

We will actually prove mild generalizations of the theorems of
Section~\ref{secmodel} partly because
the extra generality may be useful in future papers, but mainly
because the proofs require the extra generality anyway.  In
Section~\ref{secmodel}, we worked in the context of an operad $\oO$ of
(unbased) spaces; here we let $\oO$ be an operad of based spaces or an
operad in $\Spectra$.  Indeed, for $\oO$ an operad in unbased spaces,
the category $\Spectra[\oO]$ is the same as the category of algebras
over the operad $\oO_{+}$ of based spaces, and for the true symmetric
monoidal categories of spectra, it is isomorphic (not just equivalent)
to the category of algebras over the operad $\oO_{+}\sma S$ in
$\Spectra$.  In the category of EKMM $\bL$-spectra (which we needed
for the work involving the Thom isomorphism), operads in $\Spectra$ do
not generalize operads in spaces.  With this generalization in mind,
we have written the statements and arguments below in the based
context: In what follows, $\oO$ denotes either an operad in $\Spectra$
or an operad in based spaces.

We now need to prove three theorems generalizing the statements in
Section~\ref{secmodel}.  The first establishes the model structures,
proving Theorem~\ref{thmOpModCat} and finishing the proof of
Theorem~\ref{thmOpComp2}.  (We also used it in the proof of
Theorem~\ref{thmRModMod}.) 

\begin{thm}\label{thmgenmodel}
Let $\oO$ be an operad.  Then
the category $\Spectra[\oO]$ of $\oO$-algebras in $\Spectra$  is a
topological
closed model category with fibrations and weak equivalences created in
$\Spectra$.
\end{thm}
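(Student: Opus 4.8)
The plan is to produce the model structure by the standard lifting (transfer) argument along the free--forgetful adjunction, following the template for operadic algebras in \cite[\S11--12]{EM1} together with the machinery for cofibrantly generated topological model categories of \cite[VII\S4]{ekmm} and \cite{MMSS}. Write $\rO\colon \Spectra\to \Spectra[\oO]$ for the free $\oO$-algebra functor, left adjoint to the forgetful functor $U$; as recalled in the proof of Theorem~\ref{thmOpComp2}, it is built from coproducts, smash powers, and $\Sigma_{n}$-orbits, $\rO X=\bigvee_{n\geq 0}\oO(n)_{+}\sma_{\Sigma_{n}}X^{\sma n}$ (interpreted appropriately when $\oO$ is an operad in $\Spectra$). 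The candidate generating cofibrations and generating acyclic cofibrations for $\Spectra[\oO]$ are $\rO I$ and $\rO J$.

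To run the lifting argument I would check three things. First, $\Spectra[\oO]$ is complete and cocomplete: the forgetful functor $U$ creates limits, and colimits exist because $\Spectra$ is cocomplete and the monad $U\rO$ preserves reflexive coequalizers and filtered colimits (smash powers do). Second, a smallness/Cofibration Hypothesis check in the sense of \cite[III, VII]{ekmm}: the domains of the maps in $\rO I$ and $\rO J$ are small relative to $\rO I$-cell complexes, respectively $\rO J$-cell complexes, and such cell complexes are built from underlying $h$-cofibrations in $\Spectra$, along which colimits are computed as in $\Spectra$; both of these reduce to the corresponding facts for $I$ and $J$, using that $\rO$ and the functors $\oO(n)_{+}\sma_{\Sigma_{n}}(-)$ preserve the relevant smallness and $h$-cofibrations. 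Third, and this is the crux, every transfinite composite of pushouts in $\Spectra[\oO]$ of maps in $\rO J$ must be a weak equivalence in $\Spectra$. Granting these three items, the lifting theorem produces the model structure, with fibrations and weak equivalences exactly those detected by $U$, and the topological structure---tensors and cotensors over $\Top$, and the enriched pushout-product axiom---is inherited from $\Spectra$ and verified on the generating maps as in \cite{MMSS}; Theorems~\ref{thmgenchange} and~\ref{thmgencell} below then record the expected further consequences.

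The key step, and the main obstacle, is the acyclicity condition. The tool is the \emph{pushout filtration}: given a pushout $A\to B$ in $\Spectra[\oO]$ of $\rO j$ (for $j\colon X\to Y$ in $J$, or a coproduct of such) along a map $\rO X\to A$, there is a sequence $A=B_{0}\to B_{1}\to B_{2}\to\cdots$ in $\Spectra$ with $B=\colim_{k}B_{k}$, in which each $B_{k-1}\to B_{k}$ is a pushout in $\Spectra$ along a map of the form
\begin{multline*}
\oO(k)_{+}\sma_{\Sigma_{k}}\bigl(Q^{k}(j)\sma(\text{smash powers of }A)\bigr)\longrightarrow\\
\oO(k)_{+}\sma_{\Sigma_{k}}\bigl(Y^{\sma k}\sma(\text{smash powers of }A)\bigr),
\end{multline*}
where $Q^{k}(j)$ denotes the domain of the $k$-fold iterated pushout-product of $j$ with itself. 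It therefore suffices to show that for $j\in J$ each such layer map is an acyclic $h$-cofibration in $\Spectra$; then $A\to B$ is one as well, and transfinite composition together with passage to retracts closes up the acyclicity condition. The difficulty is that $\oO(k)$ need not be a free, nor even $\Sigma_{k}$-cofibrant, $\Sigma_{k}$-object, so that $\oO(k)_{+}\sma_{\Sigma_{k}}(-)$ does not in general carry $\Sigma_{k}$-equivariant acyclic cofibrations to acyclic cofibrations. The resolution---this is precisely why the \emph{positive} stable model structures are used for symmetric and orthogonal spectra, and why the cylinder and CW machinery of \cite[III, VII]{ekmm} is used for $S$-modules and $\bL$-spectra---is that in each of the four cases $J$ may be chosen so that its maps are acyclic $h$-cofibrations whose iterated pushout-products, smashed with an arbitrary object and then pushed to $\Sigma_{k}$-orbits, remain acyclic $h$-cofibrations in $\Spectra$. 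Establishing this property of $J$ (by citing and mildly extending \cite{MMSS} and \cite[III, VII]{ekmm}) and feeding it through the pushout filtration is the bulk of the work; once it is in hand, the acyclicity condition, and hence the theorem, follows formally, exactly as in \cite[\S11--12]{EM1}.
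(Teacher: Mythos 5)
Your proposal is correct and follows essentially the same route as the paper: the model structure is lifted along the free--forgetful adjunction via \cite[5.13]{MMSS}, and the acyclicity (and $h$-cofibration) conditions are verified by filtering a pushout along $\rO J$ (resp.\ $\rO I$) and checking each filtration layer, with positivity of the stable model structure (for symmetric and orthogonal spectra) and the deformation-retract form of $J$ (for EKMM $S$-modules and $\bL$-spectra) supplying exactly the homotopical input you identify. The one imprecision is your layer formula: the correct filtration quotient is $U_{\oO}C(k)\sma_{\Sigma_{k}}Q^{k}_{k-1}(g)\to U_{\oO}C(k)\sma_{\Sigma_{k}}Y^{(k)}$, where $U_{\oO}C$ is the universal enveloping operad (a coequalizer built from all the $\oO(k+m)$ acting on smash powers of $C$), not simply $\oO(k)_{+}$ smashed with smash powers of the algebra.
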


The next proves Theorem~\ref{thmChangeOp}.   In the statement $\rO$
denotes the free $\oO$-algebra functor, which is
\begin{align*}
\rO X &= \bigvee_{n\geq 0} \oO(n)\sma_{\Sigma_{n}} X^{(n)}\\
&= (\oO(0)\sma S) \vee (\oO(1)\sma X) \vee
(\oO(2)\sma X\sma X)/\Sigma_{2}\vee \dotsb.
\end{align*}
when $\oO$ is an operad of based spaces or an operad in $\Spectra$
when $\Spectra$ is one of the true symmetric monoidal categories.
Here we have written $\sma$ both for the smash product in
$\Spectra$ and the smash product of a based space with a spectrum, and
we have used parenthetical exponent
\[
X^{(n)}=X\sma \dotsb \sma X
\]
as an abbreviation for smash powers.  In the case when $\Spectra$ is
the category of EKMM $\bL$-spectra and $\oO$ is an operad in
$\Spectra$, the free functor needs the following modifications: In
homogeneous degree 0, we need to use $\oO(0)\rhd S\iso \oO(0)$ in place
of $\oO(0)\sma S$ and in homogeneous degree 1, we need to use
$\oO(1)\lhd X$ in place of $\oO(1)\sma X$, where $\lhd$ and $\rhd$ denote
the one-sided unital products of \cite[XIII.1.1]{ekmm}.  (In general,
in the case of EKMM $\bL$-spectra, we need to use a unital product
$\lhd$, $\rhd$, or $\star$ in place of a smash product whenever one or
both factors comes with a structure map from $S$; in what follows, we
refer to this as ``the usual modifications''.)

\begin{thm}\label{thmgenchange}
Let $\phi\colon \oO\to \oO'$ be a
map of operads.  The pushforward
(left Kan extension) and pullback functors
\[
L_{\phi}\colon \Spectra [\oO]\tofrom \Spectra[\oO']\noloc R_{\phi}
\]
form a Quillen adjunction, which is a Quillen equivalence if (and only
if) the induced map on free algebras
\[
\rO X\to \rO' X
\]
is a weak equivalence for all cofibrant objects $X$.
\end{thm}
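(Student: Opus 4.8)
The plan is to handle the routine ``only if'' direction and the Quillen-adjunction claim first, and then prove the ``if'' direction by cell induction on $\oO$-algebras, paralleling the proof of Theorem~\ref{thmgenmodel} and following \cite[\S12]{EM1}. For the adjunction: fibrations and acyclic fibrations in both $\Spectra[\oO]$ and $\Spectra[\oO']$ are created in $\Spectra$ by Theorem~\ref{thmgenmodel}, and $R_{\phi}$ is the identity on underlying spectra, so $R_{\phi}$ preserves them and $(L_{\phi},R_{\phi})$ is a Quillen adjunction. For ``only if'': if $X$ is a cofibrant spectrum then $\rO X$ is a cofibrant $\oO$-algebra and, since the left adjoint of the forgetful functor $\Spectra[\oO']\to\Spectra$ factors as $L_{\phi}\rO$, we have $L_{\phi}\rO X=\rO' X$; a Quillen equivalence would then force the derived unit $\rO X\to R_{\phi}\bigl((\rO' X)^{\mathrm{fib}}\bigr)$ to be a weak equivalence, and as neither $R_{\phi}$ nor fibrant replacement changes the underlying homotopy type this says exactly that $\rO X\to\rO' X$ is a weak equivalence. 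For ``if'': because $R_{\phi}$ is the identity on underlying spectra it \emph{creates}, hence preserves and reflects, all weak equivalences, so the criterion \cite[A.2]{MMSS} reduces the claim to showing that the unit $\eta_{A}\colon A\to R_{\phi}L_{\phi}A$ — which on underlying spectra is the canonical map $A\to L_{\phi}A$ — is a weak equivalence for every cofibrant $\oO$-algebra $A$.

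Since the class of $A$ for which $\eta_{A}$ is a weak equivalence is closed under retracts, it suffices to treat cell $\oO$-algebras: those built from the initial algebra $\oO(0)\sma S$ by a transfinite sequence of pushouts of maps $\rO i\colon\rO X\to\rO Y$ with $i$ a generating cofibration of $\Spectra$. The base case is the hypothesis applied to the cofibrant initial object, which gives the weak equivalence $\oO(0)\sma S\to\oO'(0)\sma S$ between initial algebras. Because $L_{\phi}$ preserves colimits and $L_{\phi}\rO=\rO'$, the cell tower of $L_{\phi}A$ is $L_{\phi}$ applied to that of $A$; and because these cells are cofibrations in $\Spectra$ (Theorem~\ref{thmgenmodel}), weak equivalences are preserved by the relevant filtered colimits. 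So everything comes down to one pushout square
\[
\xymatrix{\rO X\ar[r]\ar[d]&A\ar[d]\\ \rO Y\ar[r]&B}
\]
with $\eta_{A}$ a weak equivalence, where we must show $\eta_{B}$ is one too.

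Here I would invoke the ``filtration of a pushout'' developed while proving Theorem~\ref{thmgenmodel} (following \cite[\S12]{EM1}): the underlying spectrum of $B$ carries a natural filtration $A=B_{0}\to B_{1}\to\dotsb$ with $\colim_{q}B_{q}=B$, in which each $B_{q}$ is obtained from $B_{q-1}$ by a pushout whose attaching data is assembled $\Sigma_{q}$-equivariantly from the spaces/spectra $\oO(n+q)$, the smash powers of $A$, and the $q$-fold iterated pushout-product of $X\to Y$ (with the usual unital modifications for EKMM $\bL$-spectra). Applying $L_{\phi}$ replaces each $\oO(n+q)$ by $\oO'(n+q)$ and changes nothing else, so $\eta$ carries the $B$-filtration to the $L_{\phi}B$-filtration by a map that is $\eta_{A}$ in filtration $0$ and, on each successive layer, a map of the shape $\oO(m)_{+}\sma_{\Sigma_{m}}W\to\oO'(m)_{+}\sma_{\Sigma_{m}}W$ with $W$ the same on both sides. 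Induction up the filtration — via the gluing lemma for the pushout squares and closure of weak equivalences under the colimit — then reduces the whole theorem to showing such layer maps are weak equivalences, which is precisely what the hypothesis that $\rO Z\to\rO' Z$ be a weak equivalence for all cofibrant $Z$ is designed to supply, once one separates the homogeneous degrees of $\rO$ (using the positive model structure, or connective covers) and feeds in suitable free cofibrant test spectra.

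The hard part is exactly that last reduction: identifying the subquotients of the pushout filtration precisely enough — equivariantly, with the correct treatment of the non-free permutation action on the smash powers of $A$ and of the one-sided unital products in the $\bL$-spectrum case — to see that the layer maps are assembled from $\rO Z\to\rO' Z$ in a way the hypothesis controls. This is the same bookkeeping that powers the model-structure proof, so in practice I would develop it once, in a form general enough to serve both results, and then read off Theorem~\ref{thmgenchange} as a corollary of it together with Theorem~\ref{thmgenmodel}.
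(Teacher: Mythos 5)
Your overall architecture matches the paper's: the Quillen adjunction is immediate because $R_{\phi}$ creates fibrations and weak equivalences, the ``only if'' direction follows from the derived unit on the cofibrant free algebras $\rO X$, and the ``if'' direction is a cell induction using the filtration of $C\amalg_{\rO X}\rO Y$ by the enveloping-operad pushouts of Proposition~\ref{propfiltpush}. But the step you flag as ``the hard part'' is a genuine gap, and your sketch of how to close it does not work as stated. The filtration layers are \emph{not} of the form $\oO(m)_{+}\sma_{\Sigma_{m}}W\to\oO'(m)_{+}\sma_{\Sigma_{m}}W$: the $n$-th subquotient is $U_{\oO}C(n)\sma_{\Sigma_{n}}(Y/X)^{(n)}\to U_{\oO'}(L_{\phi}C)(n)\sma_{\Sigma_{n}}(Y/X)^{(n)}$, where the operad part is the enveloping operad of the algebra $C$ built up to that stage, not $\oO$ itself. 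The hypothesis only speaks about free algebras $\rO Z\to\rO' Z$, and ``separating homogeneous degrees and feeding in free test spectra'' gives you no direct handle on $U_{\oO}C(n)\to U_{\oO'}(L_{\phi}C)(n)$, since $U_{\oO}C(n)$ is itself a complicated coequalizer involving all of $C$.

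The paper's missing idea is this: the \emph{total} associated graded $\bigvee_{n}U_{\oO}C_{k}(n)\sma_{\Sigma_{n}}(Y/X)^{(n)}$ is precisely the underlying spectrum of the coproduct of algebras $C_{k}\amalg\rO(Y/X)$, and $L_{\phi}$ of it is $L_{\phi}C_{k}\amalg\rO'(Y/X)$. Since $Y/X$ is cofibrant, $C_{k}\amalg\rO(Y/X)$ is again a cell algebra buildable in $k$ or fewer stages, so the inductive hypothesis applies to it \emph{as an algebra}, giving the weak equivalence on the whole wedge at once and hence on each layer. To make this available you must run the induction on the number of cell-attachment stages with the strong hypothesis ``every $\rO I$-cell complex built in $\leq k$ stages has unit a weak equivalence'' (with base cases $C_{0}=\rO(*)$ and $C_{1}=\rO(Y_{1}/X_{1})$ handled directly by the hypothesis); your formulation as a single pushout square with $\eta_{A}$ assumed a weak equivalence is too weak, because the auxiliary algebra $C_{k}\amalg\rO(Y/X)$ to which you need to apply the induction is not $A$, $B$, or anything in your square.
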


To deduce Theorem~\ref{thmChangeOp}, we need to show that in the
context of operads of unbased spaces, $\phi$
induces a weak equivalence on free algebras as in the statement if and
only if each $\phi(n)$ is a (non-equivariant) stable equivalence.
The ``if'' direction is a straight-forward generalization of
\cite[15.5]{MMSS} (in the case of symmetric spectra and orthogonal
spectra) or \cite[III.5.1]{ekmm} (in the case of EKMM $S$-modules or
$\bL$-spectra) that follows by essentially the same argument.  The
``only if'' direction follows by taking $X$ to be a wedge of cofibrant
$0$-spheres $\bigvee S^{0}_{c}$; for a wedge of $n$ or more, $\rO X$
and $\rO' X$ contain
\[
\oO(n)_{+}\sma (S^{0}_{c})^{(n)}
\qquad \text{and}\qquad
\oO'(n)_{+}\sma (S^{0}_{c})^{(n)}
\]
(respectively) as wedge summands.  With an eye toward more generality,
we offer the following additional remark on the criterion in the
theorem above in the case when $\oO$ is an operad in $\Spectra$.

\begin{rem}
The criterion that $\rO X\to \rO' X$ is
a weak equivalence for every cofibrant object $X$ is satisfied in
particular in the following cases.
\begin{enumerate}
\item In the case when $\Spectra$ is the category of symmetric spectra
or orthogonal spectra, the criterion is satisfied whenever each map
$\phi(n)\colon \oO(n)\to \oO'(n)$ is a (non-equivariant) weak
equivalence.  This follows from the observation that the proof of
\cite[15.5]{MMSS} still works when (in the notation there) $X$ (our
$\oO(n)$) also
has a $\Sigma_{n}$ action: The $\Sigma_{n}$ action remains free on
$O(q)$ (or $\Sigma_{q}$).  Induction up the cellular filtration of
$E\Sigma_{n}$ shows that when $X$ is cofibrant,
\[
E\Sigma_{n+}\sma_{\Sigma_{n}}(\oO(n)\sma X^{(n)})
\]
preserves (non-equivariant) weak equivalences in (equivariant) maps of
$\oO(n)$.
\item In the case when $\Spectra$ is EKMM $S$-modules, the criterion
is satisfied whenever each $\phi(n)$ is a (non-equivariant) weak
equivalence and each $\oO(n)$ and $\oO'(n)$ has underlying
non-equivariant object in the class $\bar\aE$ of \cite[VII.6.4]{ekmm}
(or Basterra's generalization $\bar \aF$ of \cite[9.3]{bmthh}), or
more generally, the closure of $\aE$ (or $\aF$) under also the
additional operation of smash product with a based space.  The proof
is essentially the same as the proof of \cite[III.5.1]{ekmm}.  In the
case of EKMM $\bL$-spectra it also works for the analogous class
(where we allow omitting $S\sma_{\oL}(-)$).
\end{enumerate}
Both cases include in particular operads of the form $\oO\sma S$ when
$\oO$ is an operad of based spaces.
For based spaces with non-degenerate basepoints (e.g., disjoint
basepoints), a map $X\to X'$ is a stable equivalence if and only if
$X\sma S\to X'\sma S$ is a weak equivalence; however, the same is not
necessarily true for based spaces with degenerate basepoints, so some
caution is in order when applying the remarks above in the context of
operads of based spaces.
\end{rem}

Finally, the third result generalizes Theorem~\ref{thmOpCof}.  To
state it, we need to generalize the hypothesis on $\oO$.  We use the
following terminology.

\begin{defn}\label{defsigmafree}
An operad (or symmetric sequence) $\oO$ of based spaces is a
\term{$\Sigma$-free cell retract} if for each $n>0$, $\oO(n)$ is the retract
of a free based $\Sigma_{n}$-cell complex.  An operad (or symmetric
sequence) $\oO$ in $\Spectra$ is a \term{$\Sigma$-free cell retract}
if each $\oO(n)$ is equivariantly the retract of a $\Sigma_{n}$-equivariant
spectrum built equivariantly as a complex with cells of the form
\[
\Sigma_{n+}\sma X\to \Sigma_{n+}\sma Y
\]
where $X\to Y$ is a wedge of maps in $I$ and/or maps of the form
\[
S^{j-1}_{+}\sma S\to D^{j}_{+}\sma S
\]
where $S^{j-1}\to D^{j}$ is the inclusion of the boundary of the
standard $j$-dimensional disk (or the inclusion of the empty set in the
one-point space for $j=0$).  Note that there is no
condition on $\oO(0)$.
\end{defn}

The previous definition is adapted for ease of use in the proof of the
following theorem that directly generalizes Theorem~\ref{thmOpCof}.

\begin{thm}\label{thmgencell}
Assume either that $\oO$ is an operad of based spaces or that
$\Spectra$ is symmetric spectra, orthogonal spectra, or EKMM
$S$-modules.  If $\oO$ is a $\Sigma$-free cell retract, then every
cofibrant object in $\Spectra[\oO]$ is cofibrant
in the category of $\Spectra$ under $\rO(*)$.
\end{thm}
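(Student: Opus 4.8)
The plan is to run the standard cell-induction argument on the underlying spectrum of a cofibrant $\oO$-algebra, in the pattern of \cite[\S11--12]{EM1}, \cite[15.5]{MMSS}, and \cite[III.5.1]{ekmm}. By Theorem~\ref{thmgenmodel} the model structure on $\Spectra[\oO]$ is cofibrantly generated with generating cofibrations the maps $\rO A\to \rO B$ for $(A\to B)$ a generating cofibration of $\Spectra$; hence every cofibrant $\oO$-algebra is a retract of a cell $\oO$-algebra, i.e., of a transfinite composite of cobase changes of maps $\rO A\to \rO B$ starting from the initial algebra $\rO(*)$. The class of cofibrations in $\Spectra$ with domain $\rO(*)$ is closed under retracts, transfinite composition, and cobase change, so it suffices to prove: if $X$ is a cell $\oO$-algebra with $\rO(*)\to X$ a cofibration of spectra, and $Y = X\cup_{\rO A}\rO B$ is the pushout in $\Spectra[\oO]$ of $X\leftarrow \rO A\to \rO B$ along a generating cofibration $A\to B$ of $\Spectra$, then $X\to Y$ is a cofibration in $\Spectra$.

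First I would invoke the standard filtration of the underlying spectrum of such a pushout. Let $\oO_{X}$ denote the enveloping operad of $X$, characterized by the property that $\oO_{X}$-algebras are exactly $\oO$-algebras under $X$; when $X$ is a cell $\oO$-algebra, a parallel induction on its cells (using that $\oO(j+q)$ restricts to a retract of a free $\Sigma_{j}\times\Sigma_{q}$-cell complex) shows that $\oO_{X}$ is again a $\Sigma$-free cell retract. Then $Y$ carries a natural filtration $X = Y_{0}\to Y_{1}\to\cdots$ with $\colim_{q}Y_{q}=Y$ in which each $Y_{q-1}\to Y_{q}$ is, in $\Spectra$, a cobase change of
\[
\oO_{X}(q)\sma_{\Sigma_{q}}Q^{q}(A,B)\longrightarrow \oO_{X}(q)\sma_{\Sigma_{q}}B^{(q)},
\]
where $Q^{q}(A,B)$ is the domain of the $q$-fold iterated pushout-product of $A\to B$ with itself (the colimit of the punctured $q$-cube), with the usual modifications when $\Spectra$ is $\bL$-spectra. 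Thus $X\to Y$ is a transfinite composite of cobase changes of these maps, and it is enough to see that each of them is a cofibration in $\Spectra$.

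The hypothesis on $\oO$ enters exactly here. The map $Q^{q}(A,B)\to B^{(q)}$ is $\Sigma_{q}$-equivariant and built from pushout-products of the cells $A\to B$, but its $\Sigma_{q}$-action is not free; since $\oO_{X}(q)$ is equivariantly a retract of a free $\Sigma_{q}$-cell complex, smashing over $\Sigma_{q}$ with $\oO_{X}(q)$ turns it into a cofibration in $\Spectra$, by the $E\Sigma_{q}$-cell-induction argument of \cite[15.5]{MMSS} (for symmetric and orthogonal spectra) and \cite[III.5.1]{ekmm} (for $S$-modules), as in the remarks above. Hence each $Y_{q-1}\to Y_{q}$ is a cofibration, so $X\to Y$ is a cofibration in $\Spectra$, which closes the induction. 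This step is also what forces the standing hypothesis: for $\bL$-spectra the ``usual modifications'' replace $\oO_{X}(q)\sma_{\Sigma_{q}}(-)$ by a functor involving the unital products $\lhd$, $\rhd$, which need not preserve the relevant cofibrations unless $\oO$ comes from an operad of based spaces (so that each $\oO(n)$ has underlying object of the form $K\sma S$). I expect this equivariant bookkeeping — that the non-free $\Sigma_{q}$-map $Q^{q}(A,B)\to B^{(q)}$ becomes a $\Spectra$-cofibration after $\oO_{X}(q)\sma_{\Sigma_{q}}(-)$ — to be the main obstacle, since it is the only place where the $\Sigma$-freeness of $\oO$ is genuinely used.
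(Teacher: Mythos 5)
Your proposal is correct and takes essentially the same route as the paper: reduce to $\rO I$-cell algebras, filter the pushout $C\amalg_{\rO X}\rO Y$ by powers of the attached cell via the enveloping operad (the paper's $U_{\oO}C$ from Construction~\ref{defuniv} and Proposition~\ref{propfiltpush}), and carry the inductive hypothesis that the enveloping operad of each stage remains a $\Sigma$-free cell retract, which is exactly what the paper's Proposition~\ref{propfiltpush2} is set up to propagate. The point you single out as the crux --- that smashing the non-free $\Sigma_{q}$-map $Q^{q}_{q-1}(g)\to Y^{(q)}$ over $\Sigma_{q}$ with a retract of a free $\Sigma$-cell complex yields a cofibration in $\Spectra$ --- is indeed where the paper's argument lives as well.
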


Before going on to the proofs, we make the following remark about
generalizing Theorem~\ref{thmOpComp1}.

\begin{rem}
In order to keep $\Spectra$ fixed, we did not restate
Theorem~\ref{thmOpComp1} in this section, nor do we prove it below;
nevertheless, Theorem~\ref{thmOpComp1} does generalize to the case
when $\oO$ is an operad in the domain category of the left adjoint,
provided we add the hypothesis that the unit of the adjunction is a
weak equivalence for $\rO X$ for all cofibrant $X$.
The proof follows the same outline as the proof of
Theorem~\ref{thmgenchange}.
\end{rem}

We now move on to the proofs.  We fix $\oO$ an operad of based spaces
or an operad in $\Spectra$, and we write $\rO I$ and $\rO J$ for the
sets of maps in
$\Spectra[\oO]$ obtained by applying $\rO$ to $I$ and $J$, respectively
(where as indicated above, $I$ and $J$ are the canonical sets of
generating cofibrations and generating acyclic cofibrations,
respectively).  According to \cite[5.13]{MMSS}, to show that
$\Spectra[\oO]$ is a cofibrantly generated topological model category with
generating cofibrations $\rO I$ and generating acyclic cofibrations
$\rO J$, it suffices to prove the following lemma.

\begin{lem}\label{lemModel}
Let $C$ be an object in $\Spectra[\oO]$.
\begin{enumerate}
\item For $A\to B$ any coproduct (in $\Spectra[\oO]$) of maps in $\rO
I$ and any map of $\oO$-algebras $A\to C$,
the map $C\to C\amalg_{A}B$ from $C$ to the pushout in $\Spectra[\oO]$ is
an $h$-cofibration in $\Spectra$.
\item For $A\to B$ any coproduct (in $\Spectra[\oO]$) of maps in $\rO
J$ and any map of $\oO$-algebras $A\to C$,
the map $C\to C\amalg_{A}B$ from $C$ to the pushout in $\Spectra[\oO]$ is a
weak equivalence in $\Spectra$.
\end{enumerate}
\end{lem}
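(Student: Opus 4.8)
The whole section rests on Lemma~\ref{lemModel} (the rest of Theorem~\ref{thmgenmodel} then follows formally from \cite[5.13]{MMSS}), so I sketch the plan in a little more detail than the surrounding remarks. The strategy is the standard ``filtered pushout of free algebras'' argument of \cite[VII]{ekmm}, \cite[\S15]{MMSS}, and \cite[\S12]{EM1}, run so as to track $h$-cofibrations (for part~(i)) and weak equivalences (for part~(ii)) simultaneously. First I reduce to a single free map: since $\rO$ is a left adjoint, a coproduct in $\Spectra[\oO]$ of maps $\rO f_{i}\colon \rO X_{i}\to \rO Y_{i}$ is $\rO f$ for $f=\bigvee_{i}f_{i}$, and a coproduct of maps in $I$ is a cofibration in $\Spectra$ while a coproduct of maps in $J$ is an acyclic cofibration in $\Spectra$; so it is enough to treat a single pushout $P=C\amalg_{\rO X}\rO Y$ in $\Spectra[\oO]$ along $\rO f$ with $f\colon X\to Y$ a cofibration, and to show that $C\to P$ is an $h$-cofibration in $\Spectra$, and a weak equivalence in $\Spectra$ when $f$ is an acyclic cofibration.

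For this I would use the usual filtration $P=\colim_{k}P_{k}$, with $P_{0}=C$ and with $P_{k}$ the pushout in $\Spectra$ of $P_{k-1}$ and $\oO_{C}(k)\sma_{\Sigma_{k}}Y^{(k)}$ along $\oO_{C}(k)\sma_{\Sigma_{k}}Q^{k}_{k-1}(f)$, where $f^{\square k}\colon Q^{k}_{k-1}(f)\to Y^{(k)}$ is the $k$-fold iterated pushout-product of $f$ with $\Sigma_{k}$ permuting the factors, and $\oO_{C}(k)$ is the degree-$k$ part of the enveloping operad of $C$ (a $\Sigma_{k}$-object built from $\oO$ and $C$, with the usual modifications when $\Spectra$ is $\bL$-spectra); see \cite[\S12]{EM1}. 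Since $h$-cofibrations, and $h$-cofibrations that are also weak equivalences, are each closed under cobase change and sequential colimit in $\Spectra$, it then suffices to show that $\oO_{C}(k)\sma_{\Sigma_{k}}f^{\square k}$ is an $h$-cofibration for every $k$, and in addition a weak equivalence for every $k$ when $f$ is an acyclic cofibration. Part~(i) is then immediate: a cofibration of $\Spectra$ is an $h$-cofibration, the iterated pushout-product of $h$-cofibrations is a $\Sigma_{k}$-equivariant $h$-cofibration, and $\oO_{C}(k)\sma_{\Sigma_{k}}(-)$ preserves $h$-cofibrations. (This also shows that $C\to P$ is an $h$-cofibration for every $f$, which I will want later.)

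The hard part is part~(ii). When $f$ is an acyclic cofibration, $f^{\square k}$ is again an acyclic cofibration, but $\oO_{C}(k)\sma_{\Sigma_{k}}(-)$ does not preserve weak equivalences of arbitrary $\Sigma_{k}$-objects, and $C$ is not assumed cofibrant. This is exactly where the positivity of the model structures (for symmetric and orthogonal spectra) and the corresponding features of the $S$-module and $\bL$-spectrum model structures enter: when $f$ is built from the relevant generating acyclic cofibrations, $\Sigma_{k}$ acts freely on the smash powers $(F_{m}A)^{(k)}$ out of which $f^{\square k}$ is assembled, precisely because $m\ge 1$ --- with the analogous input for $S$-modules and $\bL$-spectra supplied by the Cofibration Hypothesis machinery of \cite[III, VII]{ekmm}. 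Granting that freeness, $\oO_{C}(k)\sma_{\Sigma_{k}}f^{\square k}$ is an $h$-cofibration and a weak equivalence by the argument of \cite[15.5]{MMSS} and \cite[III.5.1, VII.4]{ekmm}, and the non-cofibrancy of $C$ turns out to be harmless, since $C$ enters only through the fixed coefficient object $\oO_{C}(k)$: smashing with it preserves $h$-cofibrations unconditionally and preserves the weak equivalences at issue by the freeness of the $\Sigma_{k}$-action on the $f$-side. The remaining work is the detailed bookkeeping in this adaptation, which I will carry out below; isolating and controlling $\oO_{C}(k)\sma_{\Sigma_{k}}f^{\square k}$ with no cofibrancy hypothesis on $C$ is the one real obstacle, everything else being formal.
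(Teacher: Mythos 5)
Your proposal is correct and follows essentially the same route as the paper: reduce to a single wedge $g\colon X\to Y$ of generating (acyclic) cofibrations, filter $C\amalg_{\rO X}\rO Y$ by the enveloping-operad pushouts along $U_{\oO}C(n)\sma_{\Sigma_{n}}Q^{n}_{n-1}(g)\to U_{\oO}C(n)\sma_{\Sigma_{n}}Y^{(n)}$, verify these are $h$-cofibrations for (i), and for (ii) use that positivity makes the $\Sigma_{n}$-action on the relevant smash powers free so that \cite[15.5]{MMSS} applies with the non-cofibrant coefficient $U_{\oO}C(n)$. The only cosmetic differences are that the paper checks the equivariant $h$-cofibration claim by explicitly exhibiting $Q^{n}_{n-1}(g)\to Y^{(n)}$ as boundary inclusions of products of disks, argues (ii) via the cofiber $U_{\oO}C(n)\sma_{\Sigma_{n}}(Y/X)^{(n)}$ being weakly trivial rather than via cobase change of acyclic $h$-cofibrations, and dispatches the EKMM cases of (ii) by the shortcut that there the generating acyclic cofibrations are inclusions of deformation retracts.
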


In the statement above, an $h$-cofibration is a map $X\to Y$
satisfying the homotopy extension property, or in other words, such
that the map $Y\cup_{X}(X\sma I_{+})\to Y\sma I_{+}$ admits a
retraction.

The key to proving the lemma is understanding pushouts in $\Spectra[\oO]$
of the form $C\to C\amalg_{\rO X}\rO Y$.  We will show that the
underlying spectrum has a filtration induced by powers of $Y$.
To construct this, we use the universal enveloping operad of
\cite[8.3]{bm1} and \cite[\S12]{EM1}.

\begin{cons}\label{defuniv}
For an $\oO$-algebra $C$, define $U_{\oO}C(n)$ to be the coequalizer in
$\Spectra$
\[
\xymatrix{%
\bigvee_{k} \oO(n+k)\sma_{\Sigma_{k}}(\rO C)^{(k)}
\ar@<-.75ex>[r]\ar@<.75ex>[r]
&\bigvee_{k} \oO(n+k)\sma_{\Sigma_{k}}C^{(k)}
\ar[r]
&U_{\oO}C(n)
}
\]
(with the usual modifications when $\Spectra$ is the category of EKMM
$\bL$-spectra).  Here one map is induced by the action $\rO C\to C$
and the other by the operadic multiplication.  The spectra
$U_{\oO}C(-)$ form an operad in
$\Spectra$ with $\Sigma_{n}$ action on $U_{\oO}C(n)$ induced by the
unused $\Sigma_{n}$ action on $\oC(k+n)$, identity $S\to U_{\oO}C(1)$
induced by the identity of $\oO$ and operadic multiplication induced
by the operadic multiplication of $\oO$.
\end{cons}

(In what follows, we never actually use the operadic multiplication, just the
identity and equivariance.)

An easy check of universal
properties shows that the coproduct of $\oO$-algebras $C\amalg \rO Y$ has
\[
\bigvee_{n}U_{\oO}C(n)\sma_{\Sigma_{n}}  Y^{(n)}
\]
as its underlying spectrum when $\Spectra$ is one of the true
symmetric monoidal categories of spectra.  For EKMM $\bL$-spectra, we
have the usual modification discussed above
\[
C\amalg \rO Y = (U_{\oO}C(0)) \vee (U_{\oO}C(1)\lhd Y) \vee
\bigvee_{n>1}(U_{\oO}C(n)\sma_{\Sigma_{n}}  Y^{(n)})
\]
when $\oO$ is an operad in $\Spectra$, but when $\oO$ is an operad of
based spaces, we have a slightly different formula.  (Here for clarity
we temporarily break our convention and write $\sma_{\oL}$ for the smash
product of $\bL$-spectra, reserving $\sma$ for the smash product with
a space.)  For $\oO$ an operad of based spaces, the summands $k=0$
in Construction~\ref{defuniv} induce a map of
$\Sigma_{n}$-equivariant $\bL$-spectra $\oO(n)\sma S\to U_{\oO}C$.
For $Z$ an $\bL$-spectrum, define
$U_{\oO}C(n)\lhd_{\oO(n)}Z$ to be the following pushout.
\[
\xymatrix{%
\oO(n)\sma S\sma_{\oL}Z\ar[r]\ar[d]&U_{\oO}C(n)\sma_{\oL}Z\ar[d]\\
\oO(n)\sma Z\ar[r]&U_{\oO}C(n)\lhd_{\oO(n)}Z
}
\]
We note that both vertical maps are weak equivalences, as they become
isomorphisms after smashing with $S$ (applying $S\sma_{\oL}(-)$).
If $H<\Sigma_{n}$ and $Z$ is an $H$-equivariant $\bL$-spectrum, then
$U_{\oO}C(n)\lhd_{\oO(n)}Z$ has a right action of $H$ from
$U_{\oO}C(n)$ (and $\oO(n)$) and a left action from $Z$, and we let
$(U_{\oO}C(n)\lhd_{\oO(n)}Z)_{H}$ denote the coequalizer of these actions.
Then the universal property of the coproduct gives us
\[
C\amalg \rO Y = \bigvee_{n} (U_{\oO}C(n)\lhd_{\oO(n)}Y^{(n)})_{\Sigma_{n}}.
\]
In general, when $\Spectra$ is EKMM $\bL$-spectra and $\oO$ is an
operad of based spaces, whenever we encounter a formula involving the
universal enveloping operad, we must replace $U_{\oO}C(n)\sma_{H}Z$ with
$(U_{\oO}C(n)\lhd_{\oO(n)}Z)_{H}$; we add this to our list of ``usual
modifications'' for the case of EKMM $\bL$-spectra.

The discussion above gives us a (split) filtration on $C\amalg \rO Y$.
To get the filtration on $C\amalg_{\rO X}\rO Y$, we also need the
following construction from \cite[\S12]{EM1}.

\begin{cons}
For $g\colon X\to Y$ a map in $\Spectra$, define $Q^{n}_{i}(g)$ inductively as
follows. Let $Q^{n}_{0}(g)=X^{(n)}$ and for $i>0$, define
$Q^{n}_{i}(g)$ to be the pushout
\[
\xymatrix{%
\Sigma_{n+}\sma_{\Sigma_{n-i}\times \Sigma_{i}}
X^{(n-i)}\sma Q^{i}_{i-1}(g)\ar[r]\ar[d]
&\Sigma_{n+}\sma_{\Sigma_{n-i}\times \Sigma_{i}}
X^{(n-i)}\sma Y^{(i)}\ar[d]
\\
Q^{n}_{i-1}(g)\ar[r]&Q^{n}_{i}(g)
}
\]
\end{cons}

The basic idea is that (when $g$ is an inclusion) $Q^{n}_{i}(g)$ is
the $\Sigma_{n}$-equivariant
subspectrum of $Y^{(n)}$ with $i$ factors of $Y$ and $n-i$
factors of $X$.  Mainly we
need $Q^{n}_{n-1}(g)$ and we see that
$Y^{(n)}/Q^{n}_{n-1}(g)\iso (Y/X)^{(n)}$.  Just as in
\cite[12.6]{EM1}, we have the following proposition.

\begin{prop}\label{propfiltpush}
Let $C$ be an $\oO$-algebra and let $g\colon X\to Y$ be a map in
$\Spectra$.  For any map $X\to C$ in $\Spectra$, let
$Fil_{0}(C,g)=C$ and
inductively define $Fil_{n}(C,g)$ to be the pushout
\[
\xymatrix{%
U_{\oO}C(n)\sma_{\Sigma_{n}}  Q^{n}_{n-1}(g)\ar[r]\ar[d]
&U_{\oO}C(n)\sma_{\Sigma_{n}}  Y^{(n)}\ar[d]\\
Fil_{n-1}(C,g)\ar[r]&Fil_{n}(C,g)
}
\]
(with the usual modifications when $\Spectra$ is the category of EKMM
$\bL$-spectra).
Then $\colim Fil_{n}(C,g)$ is the underlying spectrum of the
pushout $C\amalg_{\rO X}\rO Y$ in $\Spectra[\oO]$.
\end{prop}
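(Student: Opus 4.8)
The claim is that the forgetful functor $\Spectra[\oO]\to\Spectra$ sends $C\amalg_{\rO X}\rO Y$ to $\colim_{n}Fil_{n}(C,g)$. I would treat the true symmetric monoidal cases of $\Spectra$ (symmetric spectra, orthogonal spectra, EKMM $S$-modules) in detail; for EKMM $\bL$-spectra the same argument runs with the one-sided unital products $\lhd$, $\rhd$ in place of $\sma$, exactly the ``usual modifications'' set up above, the only extra point being that the maps $\oO(n)\sma S\sma_{\oL}Z\to U_{\oO}C(n)\sma_{\oL}Z$ defining the modified functors are weak equivalences, which is immediate after applying $S\sma_{\oL}(-)$. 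The plan is to present the pushout as a reflexive coequalizer, reduce to a computation in $\Spectra$, and then match that computation with the filtration.

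\emph{Step 1: reduction to a coequalizer in $\Spectra$.} In any cocomplete category the pushout $C\amalg_{\rO X}\rO Y$ is the coequalizer of the reflexive pair
\[
\rO X\amalg(C\amalg\rO Y)\;\rightrightarrows\;C\amalg\rO Y,
\]
whose two maps restrict to the identity on $C\amalg\rO Y$ and, on the $\rO X$ summand, to the $\oO$-algebra map $\rO X\to C$ adjoint to the given $X\to C$ and to the composite $\rO X\overto{\rO g}\rO Y\to C\amalg\rO Y$ respectively, with common section the inclusion of $C\amalg\rO Y$. The monad $\rO$ preserves reflexive coequalizers: since $\rO X=\bigvee_{n}\oO(n)\sma_{\Sigma_{n}}X^{(n)}$, this reduces to the fact that wedges, the functors $\oO(n)\sma(-)$ and $(-)_{\Sigma_{n}}$, and the $n$-fold smash power $(-)^{(n)}$ all preserve reflexive coequalizers of spectra, the last because $\sma$ preserves colimits in each variable. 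Hence reflexive coequalizers of $\oO$-algebras are created in $\Spectra$, and it suffices to identify the coequalizer of the pair above, computed in $\Spectra$, with $\colim_{n}Fil_{n}(C,g)$.

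\emph{Step 2: comparison with the filtration.} Using the identifications $C\amalg\rO Y\iso\bigvee_{n}U_{\oO}C(n)\sma_{\Sigma_{n}}Y^{(n)}$ and $C\amalg\rO(X\vee Y)\iso\bigvee_{n}U_{\oO}C(n)\sma_{\Sigma_{n}}(X\vee Y)^{(n)}$ established above (with $U_{\oO}C(0)=C$), and the wedge decomposition of $(X\vee Y)^{(n)}$ into summands $\Sigma_{n+}\sma_{\Sigma_{n-i}\times\Sigma_{i}}X^{(n-i)}\sma Y^{(i)}$ indexed by the number $i$ of $Y$-factors, both legs of the coequalizer become explicit: one uses $g$ to turn $X$-factors into $Y$-factors, and the other uses the action $\oO(k)\sma_{\Sigma_{k}}C^{(k)}\to C$ to absorb a block of $X$-factors into $C$, lowering the $U_{\oO}C$-arity accordingly. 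I would then build maps both ways. Forward: the composite $U_{\oO}C(n)\sma_{\Sigma_{n}}Y^{(n)}\to C\amalg\rO Y\to C\amalg_{\rO X}\rO Y$ factors through $Fil_{n}(C,g)$, because the pushout square defining $Fil_{n}$ from $Fil_{n-1}$ collapses $U_{\oO}C(n)\sma_{\Sigma_{n}}Q^{n}_{n-1}(g)$, which is exactly the $\Sigma_{n}$-subobject of the $n$-th smash power on which the two coequalizer maps differ by absorbing a factor of $X\hookrightarrow Y$ into $C$ (an operation that lowers the $\oO$-arity by one and so lands in $Fil_{n-1}$); assembling over $n$ gives $\colim_{n}Fil_{n}(C,g)\to C\amalg_{\rO X}\rO Y$. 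Reverse: I would equip $\colim_{n}Fil_{n}(C,g)$ with an $\oO$-algebra structure extending that of $C=Fil_{0}(C,g)$, using the operad structure on $U_{\oO}C(-)$ to distribute the inputs of an operation among the ``old'' ($U_{\oO}C$) slots and the ``new'' ($Y$) slots; here the full family $Q^{n}_{i}(g)$, not merely $i=n-1$, is needed, to sort smash powers of $\colim Fil_{n}$ by how many tensor factors lie in positive filtration. The inclusions $C\to\colim Fil_{n}$ and $\rO Y\to\colim Fil_{n}$ then form a cocone compatible with the two maps out of $\rO X$, inducing $C\amalg_{\rO X}\rO Y\to\colim_{n}Fil_{n}(C,g)$; a check on the exhaustive filtration shows the two composites are identities.

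The main obstacle lies in Step~2: constructing the $\oO$-algebra structure on $\colim_{n}Fil_{n}(C,g)$, and verifying that the inductive pushout squares match the coequalizer relations in each smash-degree. Both are formal but somewhat intricate pieces of bookkeeping with the $Q$-construction and the operadic multiplication on $U_{\oO}C(-)$, carried out exactly as in \cite[\S12]{EM1}; the categories $\Spectra$ enter only through the fact that $\sma$ preserves colimits in each variable (hence reflexive coequalizers and the relevant filtered colimits), which holds in all of our cases with the usual modifications for $\LMS[\bL]$.
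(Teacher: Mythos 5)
Your proposal is correct and follows essentially the same route as the paper: both present $C\amalg_{\rO X}\rO Y$ as a reflexive coequalizer computed in the underlying category $\Spectra$ (the paper via $\rO((\rO C)\vee X\vee Y)\rightrightarrows \rO(C\vee Y)$, you via $\rO X\amalg(C\amalg\rO Y)\rightrightarrows C\amalg\rO Y$ together with the $U_{\oO}C$ coproduct formula) and then identify that coequalizer with $\colim Fil_{n}(C,g)$ by interchanging colimits. The one inefficiency is your ``reverse'' direction: once your Step~1 shows reflexive coequalizers of $\oO$-algebras are created in $\Spectra$, no $\oO$-algebra structure on $\colim Fil_{n}(C,g)$ is needed---the whole comparison is a single interchange of colimits in $\Spectra$, which is how the paper phrases it.
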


\begin{proof}
Using the constructions of $U_{\oO}C(n)$ and $Q^{n}_{n-1}(g)$, and
commuting colimits, we see that $\colim Fil_{n}(C,g)$ can be identified as
the coequalizer of the following pair of arrows
\begin{multline*}
\bigvee_{i,k} \oO(k+n)\sma_{\Sigma_{k}\times \Sigma_{n-i}\times \Sigma_{i}}(\rO C)^{(k)}\sma X^{(n-i)}\sma Y^{(i)}\\[-1em]
\xymatrix@C=1pc{\ar@<-.75ex>[r]\ar@<.75ex>[r]&}
\bigvee_{k} \oO(k+n)\sma_{\Sigma_{k}\times \Sigma_{n}}C^{(k)}\sma Y^{(n)}
\end{multline*}
(with the usual modifications when $\Spectra$ is the category of EKMM
$\bL$-spectra)
where one map is induced by the action map $\rO C\to C$ and the map
$g\colon X\to Y$ and the other is induced by the
operadic multiplication on $\oO$ and the given map $X\to C$.  Comparing
universal properties, the coequalizer above is easily identified as
the pushout $C\amalg_{\rO X}\rO Y$ in $\Spectra[\oO]$.
\end{proof}

Proposition~\ref{propfiltpush} is what we need to prove
Lemma~\ref{lemModel} and therefore complete the proof of
Theorem~\ref{thmgenmodel}.

\begin{proof}[Proof of Lemma~\ref{lemModel}]
In both parts, we let $g\colon X\to Y$ be a wedge
of maps in $I$ (for~(i)) or $J$ (for~(ii)) such that $A\to B$ is $\rO
g$.

For (i), by Proposition~\ref{propfiltpush}, it suffices to see that
each map $Fil_{n-1}(C,g)\to Fil_{n}(C,g)$ is an $h$-cofibration (for $n>1$),
and for this it suffices to show that each map
\begin{equation}\label{eqpflemModel}\tag{*}
U_{\oO}C(n)\sma_{\Sigma_{n}}  Q^{n}_{n-1}(g)\to
U_{\oO}C(n)\sma_{\Sigma_{n}}  Y^{(n)}
\end{equation}
is an $h$-cofibration
(with the usual modification when $\Spectra$ is the category of EKMM
$\bL$-spectra).  In the case when $\Spectra$ is symmetric or
orthogonal spectra, $g$ is a wedge of maps of the form
\[
F_{i}S^{j-1}_{+}\to F_{i}D^{j}_{+}
\]
where $S^{j-1}\to D^{j}$ is the inclusion of the boundary into the
standard $j$-dimensional disk.  Then $Y^{(n)}$ becomes the wedge of
$\Sigma_{n}$-equivariant spectra of the form
\begin{multline*}
\Sigma_{n+}\sma_{\Sigma_{m_{1},\dotsc,m_{k}}}
((F_{i_{1}}D^{j_{1}}_{+})^{(m_{1)}}\sma \dotsb
\sma (F_{i_{k}}D^{j_{k}}_{+})^{(m_{k)}})\\
\iso \Sigma_{n+}\sma_{\Sigma_{m_{1},\dotsc ,m_{k}}}
F_{i}( (D^{j_{1}})^{m_{1}}\times \dotsb \times (D^{j_{k}})^{m_{k}})_{+}
\end{multline*}
where $m_{1}+\dotsb +m_{k}=n$ and $i:=i_{1}+\dotsb +i_{k}$,
$\Sigma_{m_{1},\dotsc,m_{k}}:=\Sigma_{m_{1}}\times \dotsb \times
\Sigma_{m_{k}}$.  We can then identify $Q^{n}_{n-1}(g)$ as the wedge
$\Sigma_{n}$-equivariant spectra of the form
\[
\Sigma_{n+}\sma_{\Sigma_{m_{1},\dotsc ,m_{k}}}
F_{i}\partial ( (D^{j_{1}})^{m_{1}}\times \dotsb \times (D^{j_{k}})^{m_{k}})_{+}
\]
and the map $Q^{n}_{n-1}(g)\to Y^{(n)}$ as the induced by the boundary
inclusions.  By inspection, this is a $\Sigma_{n}$-equivariant
$h$-cofibration, and it then follows that the map \eqref{eqpflemModel}
is an $h$-fibration.  The case of EKMM $S$-modules and $\bL$-spectra
is analogous.

For~(ii), the case of EKMM $S$-modules and $\bL$-spectra is trivial as
the map $X\to Y$ is the inclusion of a deformation retraction, and
therefore, the map $\rO X\to \rO Y$ is the inclusion of a deformation
retract in the category $\Spectra[\oO]$; it follows that $C\to
C\amalg_{A}B$ is the inclusion of a deformation retract in the
category $\Spectra[\oO]$ and therefore a homotopy equivalence.  For the
case of symmetric spectra or orthogonal spectra, applying
Proposition~\ref{propfiltpush}, it suffices to show that the map
$Fil_{n-1}(C,g)\to Fil_{n}(C,g)$ is a stable equivalence for all
$n\geq 1$.  The argument above
shows the map is an $h$-cofibration.  Its cofiber
is
\begin{equation}\label{eqpflemModel2}\tag{**}
U_{\oO}C(n)\sma_{\Sigma_{n}} (Y/X)^{(n)}.
\end{equation}
Since $Y/X$ is positive cofibrant and stably equivalent to the trivial
spectrum, it follows from~\cite[15.5]{MMSS} that~\eqref{eqpflemModel2}
is weakly equivalent to the trivial spectrum, and hence
that $Fil_{n-1}(C,g)\to Fil_{n}(C,g)$ is a weak equivalence.
\end{proof}

Proposition~\ref{propfiltpush} is also all we need for the proof of
Theorem~\ref{thmgenchange}.

\begin{proof}[Proof of Theorem~\ref{thmgenchange}]%
Since in both algebra categories fibrations and weak equivalences are
created in $\Spectra$ and since the right adjoint does not change the
underlying spectrum, the adjunction is a Quillen adjunction.
Now assume that $\phi$ induces a weak equivalence $\rO X\to \rO' X$
for every cofibrant object $X$.  To see that the
adjunction is a Quillen equivalence, it suffices to show that for a
cofibrant $\oO$-algebra $C$, the unit map $C\to R_{\phi}L_{\phi}C$ is
a weak equivalence.  Without loss of generality, we can assume that
$C$ is an $\rO I$-cell complex.  Then $C=\colim C_{k}$, where
$C_{0}=\rO(*)$
and $C_{k}=C_{k-1}\amalg_{A_{k}}B_{k}$ where
$A_{k}\to B_{k}$ is a coproduct of maps in $\rO I$, or equivalently,
is $\rO g_{k}$ for $g_{k}\colon X_{k}\to Y_{k}$ a wedge of maps in
$I$.  We have that $L_{\phi}C_{0}=\rO'(*)$ and by hypothesis
the map $\rO(*)\to \rO'(*)$ is a weak
equivalence.  Likewise, $C_{1}=\rO (Y_{1}/X_{1})$,
$L_{\phi}C_{1}=\rO'(Y_{1}/X_{1})$ and the unit map is a weak
equivalence. This shows that for any $\rO
I$-cell complex built in $0$ stages or $1$ stage, the unit map is a
weak equivalence.  Assume by induction that for any $\rO I$-cell
complex built in $k$ or fewer stages, the unit map is a weak
equivalence, and consider $C_{k+1}=C_{k}\amalg_{\rO X}\rO Y$ for
$g\colon X\to Y$.  Proposition~\ref{propfiltpush} writes $C_{k+1}$ as
$\colim Fil_{n}(C_{k},g)$ and similarly $L_{\phi}C_{k+1}$ is $\colim
Fil_{n}(L_{\phi}C_{k},g)$ constructed using the operad $\oO'$.  The
proof of Lemma~\ref{lemModel}) showed that this is a filtration by
$h$-cofibrations, and we note that the associated graded spectra are
\[
\bigvee U_{\oO}C_{k}(n)\sma_{\Sigma_{n}} (Y/X)^{(n)}
\qquad \text{and}\qquad
\bigvee U_{\oO'}(L_{\phi}C_{k})(n)\sma_{\Sigma_{n}} (Y/X)^{(n)}
\]
(with the usual modifications when $\Spectra$ is the category of EKMM
$\bL$-spectra).
These naturally form algebras:
The first is the $\oO$-algebra $C_{k}\amalg \rO(Y/X)$ and the second
is the $\oO'$-algebra
\[
L_{\phi}C_{k}\amalg \rO'(Y/X)=L_{\phi}(C_{k}\amalg \rO(Y/X))
\]
(with coproducts taken in the appropriate algebra category $\Spectra[\oO]$
and $\Spectra[\oO']$, respectively).  As $C_{k}\amalg (Y/X)$ is an
$\oO$-algebra that can
be built in $k$ or fewer stages (as $k\geq 1$), it follows that the
unit map
\[
C_{k}\amalg \rO(Y/X)\to
R_{\phi }L_{\phi}(C_{k}\amalg \rO(Y/X))
\]
is a weak equivalence, which shows that each map on quotients
\[
U_{\oO}C_{k}(n)\sma_{\Sigma_{n}} (Y/X)^{(n)}\to
U_{\oO'}L_{\phi}C_{k}(n)\sma_{\Sigma_{n}} (Y/X)^{(n)}.
\]
is a weak equivalence and shows that each map
\[
Fil_{n}(C_{k},g)\to Fil_{n}(L_{\phi}C_{k},g)
\]
is a weak equivalence.  It follows that $C_{k+1}\to
R_{\phi}L_{\phi}C_{k+1}$ is a weak equivalence.  Finally, $C=\colim
C_{k}$ is the colimit of a sequence of $h$-cofibrations as is
$L_{\phi}C=\colim L_{\phi}C_{k}$, and so the unit $C\to
R_{\phi}L_{\phi}C$ is a weak equivalence.
\end{proof}

Finally, we need to prove Theorem~\ref{thmgencell}.  For this we need
a slight generalization of Proposition~\ref{propfiltpush} that handles
the construction of the universal enveloping operad.

\begin{prop}\label{propfiltpush2}
Let $C$ be an
$\oO$-algebra and let $g\colon X\to Y$ be a map in $\Spectra$. For a
map $X\to C$ in $\Spectra$, let
$Fil_{0}(U_{\oO}C,g)(m)= U_{\oO}C(m)$ and inductively define
$Fil_{n}(U_{\oO}C,g)(m)$ as the pushout
\[
\xymatrix{%
U_{\oO}C(m+n)\sma_{\Sigma_{n}}  Q^{m}_{m-1}(g)\ar[r]\ar[d]
&U_{\oO}C(m+n)\sma_{\Sigma_{n}}  Y^{(n)}\ar[d]\\
Fil_{n-1}(U_{\oO}C,g)(m)\ar[r]&Fil_{n}(U_{\oO}C,g)(m)
}
\]
(with the usual modifications when $\Spectra$ is the category of EKMM
$\bL$-spectra).
Then $\colim_{n} Fil_{n}(U_{\oO}C,g)(m)$ is the underlying
$\Sigma_{n}$-equivariant spectrum of
$U_{\oO}(C\amalg_{\rO X}\rO Y)(m)$.
\end{prop}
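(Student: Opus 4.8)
The plan is to mimic the proof of Proposition~\ref{propfiltpush} (i.e., of \cite[12.6]{EM1}), carrying $m$ extra inputs of $\oO$ along throughout; these carry a $\Sigma_{m}$-action that never interacts with the colimits formed below, so the content is purely the bookkeeping needed to keep that action to the side. First I would expand the left-hand side $\colim_{n}Fil_{n}(U_{\oO}C,g)(m)$: substitute into each pushout defining $Fil_{n}(U_{\oO}C,g)(m)$ the presentation of $U_{\oO}C(m+n)$ as the coequalizer of Construction~\ref{defuniv} together with the iterated-pushout description of $Q^{n}_{n-1}(g)$ in terms of the mixed smash powers $X^{(n-i)}\sma Y^{(i)}$, and then commute all the colimits in sight past one another (the coproducts over $k$, the coequalizer defining $U_{\oO}C$, the pushouts building the $Fil$-tower, and the final $\colim$ over $n$). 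This identifies $\colim_{n}Fil_{n}(U_{\oO}C,g)(m)$ with the coequalizer in $\Spectra$ of the pair of arrows
\[
\bigvee_{i,n,k}\oO(m+n+k)\sma_{\Sigma_{k}\times\Sigma_{n-i}\times\Sigma_{i}}\bigl((\rO C)^{(k)}\sma X^{(n-i)}\sma Y^{(i)}\bigr)
\;\rightrightarrows\;
\bigvee_{n,k}\oO(m+n+k)\sma_{\Sigma_{k}\times\Sigma_{n}}\bigl(C^{(k)}\sma Y^{(n)}\bigr),
\]
where one arrow is induced by the action $\rO C\to C$ together with $g\colon X\to Y$, the other by the operadic multiplication of $\oO$ together with the chosen map $X\to C$, and $\Sigma_{m}$ acts through the remaining $m$ inputs of $\oO(m+n+k)$ (with the usual modifications when $\Spectra$ is the category of EKMM $\bL$-spectra).

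Next I would expand the right-hand side; write $C'=C\amalg_{\rO X}\rO Y$. By Construction~\ref{defuniv}, $U_{\oO}C'(m)$ is the coequalizer of $\bigvee_{j}\oO(m+j)\sma_{\Sigma_{j}}(\rO C')^{(j)}\rightrightarrows\bigvee_{j}\oO(m+j)\sma_{\Sigma_{j}}(C')^{(j)}$; substituting the coequalizer presentation of $C'$ obtained in the proof of Proposition~\ref{propfiltpush} (and of $\rO$ applied to it) and again commuting colimits presents $U_{\oO}C'(m)$ as a coequalizer over exactly the same indexed diagram as in the previous paragraph. Matching the two presentations completes the identification; it is perhaps cleanest to phrase this as a universal-property check, noting that for any $\Sigma_{m}$-equivariant spectrum $V$ a $\Sigma_{m}$-equivariant map out of either side is the same data as a compatible family of maps $\oO(m+n+k)\sma_{\Sigma_{k}\times\Sigma_{n}}(C^{(k)}\sma Y^{(n)})\to V$ equalizing the two relations above. (Alternatively one can argue conceptually via the equivalence between $\oO$-algebras under $C$ and $U_{\oO}C$-algebras, which carries $C'$ to the $U_{\oO}C$-analogue of the same relative cell; but this needs transitivity of the universal enveloping operad and is no shorter.)

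As a consistency check, the $m=0$ case recovers Proposition~\ref{propfiltpush} on the nose, since then $U_{\oO}C(0)=C$, $Fil_{n}(U_{\oO}C,g)(0)=Fil_{n}(C,g)$, and $U_{\oO}C'(0)=C'$. The step I expect to be the main obstacle is precisely this bookkeeping: one must verify that the $\Sigma_{m}$-action on the $m$ extra inputs of $\oO$ commutes past every coequalizer (which touches only the $\Sigma_{k}$ acting on the $\rO C$- and $C$-factors) and past every pushout in the $Fil$-tower (which touches only the $\Sigma_{n}$ acting on the $Y$-factors) and is compatible with both structure maps, so that the resulting identification is genuinely $\Sigma_{m}$-equivariant, and that the iterated mixed powers $X^{(n-i)}\sma Y^{(i)}$ coming from the $Q$-construction, together with the EKMM $\bL$-spectrum modifications, are tracked correctly. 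None of this is conceptually hard, but it is the only real content of the proof.
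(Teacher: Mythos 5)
Your proposal is correct and follows essentially the same route as the paper: unwind the filtration and interchange colimits to present $\colim_{n} Fil_{n}(U_{\oO}C,g)(m)$ as a coequalizer indexed exactly as in your first display, and then identify that coequalizer with $U_{\oO}(C\amalg_{\rO X}\rO Y)(m)$. The paper packages the second half a bit more slickly by observing that your coequalizer is precisely $U_{\oO}(-)(m)$ applied to the \emph{reflexive} coequalizer $\rO((\rO C)\vee X\vee Y)\rightrightarrows \rO(C\vee Y)\to C\amalg_{\rO X}\rO Y$; reflexivity is the one word worth adding to your write-up, since it is what licenses commuting the smash powers inside $U_{\oO}(-)(m)$ past the coequalizer presenting $C\amalg_{\rO X}\rO Y$.
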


\begin{proof}
As in the proof of Proposition~\ref{propfiltpush}, unwinding the
definitions and interchanging colimits
identifies $\colim_{n} Fil_{n}(U_{\oO}C,g)(m)$ as the
coequalizer
\begin{multline*}
\bigvee_{i,k} \oO(k+n+m)\sma_{\Sigma_{k}\times \Sigma_{n-i}\times \Sigma_{i}}(\rO C)^{(k)}\sma X^{(n-i)}\sma Y^{(i)}\\[-1em]
\xymatrix@C=1pc{\ar@<-.75ex>[r]\ar@<.75ex>[r]&}
\bigvee_{k} \oO(k+n+m)\sma_{\Sigma_{k}\times \Sigma_{n}}C^{(k)}\sma
Y^{(n)}
\to \colim_{n} Fil_{n}(U_{\oO}C,g)(m)
\end{multline*}
(with the usual modifications when $\Spectra$ is the category of EKMM
$\bL$-spectra).
We can rewrite this as the coequalizer
\[
\xymatrix@C=1pc{%
U_{\oO}(\rO((\rO C) \vee X \vee Y))(m)\ar@<-.75ex>[r]\ar@<.75ex>[r]
&U_{\oO}(\rO(C\vee Y))(m)\ar[r]&\colim_{n} Fil_{n}(U_{\oO}C,g)(m)
}
\]
which is the universal enveloping operad construction $U_{\oO}(-)(m)$
applied to the reflexive coequalizer
\[
\xymatrix@C=1pc{%
\rO((\rO C) \vee X \vee Y)\ar@<-.75ex>[r]\ar@<.75ex>[r]
&\rO(C\vee Y)\ar[r]&C\amalg_{\rO X}\rO Y.
}\qedhere
\]
\end{proof}

\begin{proof}[Proof of Theorem~\ref{thmgencell}]%
First consider the case where $\Spectra$ is one of the true symmetric
monoidal categories of spectra.  It suffices to prove that $\rO
I$-cell complexes are cofibrant in $\Spectra$ under $\rO(*)$.  Let $C$
be an $\rO I$-cell complex; then $C=\colim C_{k}$ with $C_{0}=\rO(*)$
and $C_{k+1}=C_{k}\amalg_{\rO X_{k}}\rO Y_{k}$ for $X_{k}\to Y_{k}$ a
wedge of maps in $I$. It therefore suffices to show that each map
$C_{k}\to C_{k+1}$ is a cofibration in $\Spectra$.  Since
$U_{\oO}(C_{0})(n)=\oO(n)\sma S$, by hypothesis $U_{\oO}(C_{0})$ is a
$\Sigma$-free cell retract.  Assume by induction that $U_{\oO}C_{k}$
is a $\Sigma$-free cell retract. The argument in Lemma~\ref{lemModel}
generalizes to show that each map $Q^{n}_{n-1}(g_{k})\to Y^{(n)}_{k}$
is a (non-equivariant) cofibration between cofibrant objects and each
map
\[
U_{\oO}C_{k}(n+m)\sma_{\Sigma_{n}} Q^{n}_{n-1}(g_{k})\to
U_{\oO}C_{k}(n+m)\sma_{\Sigma_{n}} Y^{(n)}_{k}
\]
is an $h$-cofibration.  Each map above
and therefore each map
\[
Fil_{n-1}(C_{k},g_{k})\to Fil_{n}(C_{k},g_{k})
\]
(for $m=0$) and each map
\[
Fil_{n-1}(U_{\oO}C_{k},g_{k})(m)\to Fil_{n}(U_{\oO}C_{k},g_{k})(m)
\]
is $\Sigma_{m}$-equivariantly a retract of a relative cell complex
built out of cells of the form
\[
\Sigma_{m+}\sma X\to \Sigma_{m+}\sma Y
\]
where $X\to Y$ is a wedge of maps in $I$.  It follows that $C_{k}\to
C_{k+1}$ is a cofibration in $\Spectra$ and that $U_{\oO}C_{k+1}$ is a
$\Sigma$-free cell retract.

The case when $\Spectra$ is EKMM $\bL$-spectra and $\oO$ is an operad
of spaces is similar except that the inductive hypothesis on
$U_{\oO}C_{k}$ is replaced by the hypothesis that for each $m$,
$\oO(m)\sma S\to U_{\oO}C_{k}(m)$ is the retract of a relative cell
complex built out of cells of the form
\[
\Sigma_{m+}\sma X\to \Sigma_{m+}\sma Y
\]
where $X\to Y$ is a wedge of maps in $I$.
\end{proof}


\bibliographystyle{amsplain}\let\bibcomma\relax

\begin{thebibliography}{10}

\bibitem{AdamsBlueBook}
J.~F. Adams, \emph{Stable homotopy and generalised homology}, Chicago Lectures
  in Mathematics, University of Chicago Press, Chicago, IL, 1995, Reprint of
  the 1974 original. \MR{1324104 (96a:55002)}

\bibitem{AHR}
M.~Ando, M.~J. Hopkins, and C.~Rezk, \emph{Multiplicative orientations of
  {$KO$}-theory and of the spectrum of topological modular forms}, \preprint,
  2010.

\bibitem{AHSCube}
M.~Ando, M.~J. Hopkins, and N.~P. Strickland, \emph{Elliptic spectra, the
  {W}itten genus and the theorem of the cube}, Invent. Math. \textbf{146}
  (2001), no.~3, 595--687. \MR{1869850 (2002g:55009)}

\bibitem{AndoThesis}
Matthew Ando, \emph{Isogenies of formal group laws and power operations in the
  cohomology theories {$E\sb n$}}, Duke Math. J. \textbf{79} (1995), no.~2,
  423--485. \MR{1344767 (97a:55006)}

\bibitem{AHSSigma}
Matthew Ando, Michael~J. Hopkins, and Neil~P. Strickland, \emph{The sigma
  orientation is an {$H\sb \infty$} map}, Amer. J. Math. \textbf{126} (2004),
  no.~2, 247--334. \MR{2045503 (2005d:55009)}

\bibitem{bm1}
Maria Basterra and Michael~A. Mandell, \emph{Homology and cohomology of {$E\sb
  \infty$} ring spectra}, Math.\ Z. \textbf{249} (2005), no.~4, 903--944.
  \MR{MR2126222 (2005m:55016)}

\bibitem{bmthh}
\bysame, \emph{Homology of {$E\sb n$} ring spectra and iterated {$THH$}},
  Algebr. Geom. Topol. \textbf{11} (2011), no.~2, 939--981. \MR{2782549
  (2012g:55014)}

\bibitem{bmbp}
\bysame, \emph{The multiplication on {$BP$}}, \preprint\ arXiv:1101.0023
  [math.AT], {J. Topol.}, to appear, 2013.

\bibitem{BV-HE}
J.~M. Boardman and R.~M. Vogt, \emph{Homotopy-everything {$H$}-spaces}, Bull.
  Amer. Math. Soc. \textbf{74} (1968), 1117--1122. \MR{0236922 (38 \#5215)}

\bibitem{BV}
\bysame, \emph{Homotopy invariant algebraic structures on topological spaces},
  Springer-Verlag, Berlin, 1973, Lecture Notes in Mathematics, Vol.\ 347.
  \MR{MR0420609 (54 \#8623a)}

\bibitem{bouscosobs}
A.~K. Bousfield, \emph{On the homology spectral sequence of a cosimplicial
  space}, Amer.\ J.\ Math. \textbf{109} (1987), no.~2, 361--394. \MR{88j:55017}

\bibitem{BFVTHH}
Morten Brun, Zbigniew Fiedorowicz, and Rainer~M. Vogt, \emph{On the
  multiplicative structure of topological {H}ochschild homology}, Algebr. Geom.
  Topol. \textbf{7} (2007), 1633--1650. \MR{2366174 (2009a:55004)}

\bibitem{thesis}
Steven~Greg Chadwick, \emph{Structured orientations of thom spectra}, ProQuest
  LLC, Ann Arbor, MI, 2012, Thesis (Ph.D.)--Indiana University.

\bibitem{ConnerFloyd}
P.~E. Conner and E.~E. Floyd, \emph{The relation of cobordism to
  {$K$}-theories}, Lecture Notes in Mathematics, No. 28, Springer-Verlag,
  Berlin, 1966. \MR{0216511 (35 \#7344)}

\bibitem{Dold}
Albrecht Dold, \emph{Chern classes in general cohomology}, Symposia
  {M}athematica, {V}ol. {V} ({INDAM}, {R}ome, 1969/70), Academic Press, London,
  1971, pp.~385--410. \MR{0276968 (43 \#2707)}

\bibitem{ekmm}
A.~D. Elmendorf, I.~Kriz, M.~A. Mandell, and J.~P. May, \emph{Rings, modules,
  and algebras in stable homotopy theory}, Mathematical Surveys and Monographs,
  vol.~47, American Mathematical Society, Providence, RI, 1997, With an
  appendix by M.\ Cole. \MR{MR1417719 (97h:55006)}

\bibitem{EM1}
A.~D. Elmendorf and M.~A. Mandell, \emph{Rings, modules, and algebras in
  infinite loop space theory}, Adv. Math. \textbf{205} (2006), no.~1, 163--228.
  \MR{2254311 (2007g:19001)}

\bibitem{Hirzebruch-RiemannRochOrig}
F.~Hirzebruch, \emph{Neue topologische {M}ethoden in der algebraischen
  {G}eometrie}, Ergebnisse der Mathematik und ihrer Grenzgebiete (N.F.), Heft
  9, Springer-Verlag, Berlin, 1956. \MR{0082174 (18,509b)}

\bibitem{Hirzebruch-RiemannRoch}
Friedrich Hirzebruch, \emph{Topological methods in algebraic geometry},
  Classics in Mathematics, Springer-Verlag, Berlin, 1995, Translated from the
  German and Appendix One by R. L. E. Schwarzenberger, With a preface to the
  third English edition by the author and Schwarzenberger, Appendix Two by A.
  Borel, Reprint of the 1978 edition. \MR{1335917 (96c:57002)}

\bibitem{JohnsonNoel}
Niles Johnson and Justin Noel, \emph{For complex orientations preserving power
  operations, {$p$}-typicality is atypical}, Topology Appl. \textbf{157}
  (2010), no.~14, 2271--2288. \MR{2670503 (2011g:55009)}

\bibitem{lms}
L.~G. Lewis, Jr, J.~P. May, and M.~Steinberger, \emph{Equivariant stable
  homotopy theory}, Lecture Notes in Mathematics, vol. 1213, Springer-Verlag,
  Berlin, 1986, With contributions by J.\ E.\ McClure. \MR{88e:55002}

\bibitem{MMM}
M.~A. Mandell and J.~P. May, \emph{Equivariant orthogonal spectra and
  {$S$}-modules}, Mem. Amer. Math. Soc. \textbf{159} (2002), no.~755, x+108.
  \MR{1922205 (2003i:55012)}

\bibitem{MMSS}
M.~A. Mandell, J.~P. May, S.~Schwede, and B.~Shipley, \emph{Model categories of
  diagram spectra}, Proc. London Math. Soc. (3) \textbf{82} (2001), no.~2,
  441--512. \MR{MR1806878 (2001k:55025)}

\bibitem{MayGILS}
J.~P. May, \emph{The geometry of iterated loop spaces}, Springer-Verlag,
  Berlin, 1972, Lectures Notes in Mathematics, Vol.\ 271. \MR{54 \#8623b}

\bibitem{May-mult}
J.~P. May, \emph{Multiplicative infinite loop space theory}, J. Pure Appl.
  Algebra \textbf{26} (1982), no.~1, 1--69. \MR{669843 (84c:55013)}

\bibitem{mayeinf}
J.~Peter May, \emph{${E}\sb{\infty }$ ring spaces and ${E}\sb{\infty }$ ring
  spectra}, Lecture Notes in Mathematics, vol. 577, Springer-Verlag, Berlin,
  1977, With contributions by Frank Quinn, Nigel Ray, and J\o rgen Tornehave.
  \MR{58 \#13008}

\bibitem{Novikov}
S.~P. Novikov, \emph{Homotopy properties of {T}hom complexes}, Mat. Sb. (N.S.)
  \textbf{57 (99)} (1962), 407--442. \MR{0157381 (28 \#615)}

\bibitem{Palais-AtiyahSinger}
Richard~S. Palais, \emph{Seminar on the {A}tiyah-{S}inger index theorem}, With
  contributions by M. F. Atiyah, A. Borel, E. E. Floyd, R. T. Seeley, W. Shih
  and R. Solovay. Annals of Mathematics Studies, No. 57, Princeton University
  Press, Princeton, N.J., 1965. \MR{0198494 (33 \#6649)}

\bibitem{Stong-SC}
R.~E. Stong, \emph{Some remarks on symplectic cobordism}, Ann. of Math. (2)
  \textbf{86} (1967), 425--433. \MR{0219079 (36 \#2162)}

\bibitem{Witten-Wittengenus}
Edward Witten, \emph{Elliptic genera and quantum field theory}, Comm. Math.
  Phys. \textbf{109} (1987), no.~4, 525--536. \MR{885560 (89i:57017)}

\end{thebibliography}

\def\noopsort#1{}\def\MR#1{}\def\preprint{\ifx\bibcomma\undefined Preprint\else
  preprint\fi}
\providecommand{\bysame}{\leavevmode\hbox to3em{\hrulefill}\thinspace}
\providecommand{\MR}{\relax\ifhmode\unskip\space\fi MR }
\providecommand{\MRhref}[2]{%
  \href{http://www.ams.org/mathscinet-getitem?mr=#1}{#2}
}
\providecommand{\href}[2]{#2}

\end{document}